\newtheorem{theorem}{Theorem}[section]
\newtheorem{lemma}[theorem]{Lemma}
\newtheorem{proposition}[theorem]{Proposition}
\theoremstyle{definition}
\newtheorem{remark}[theorem]{Remark}
\newtheorem{claim}{\bf{Claim}}
\numberwithin{equation}{section}
\journal{Journal of \LaTeX\ Templates}
\begin{document}
\begin{frontmatter}
\title{{\bf On the $p$-Laplacian Lichnerowicz equation on compact Riemannian manifolds}}

\author[mymainaddress]{Nanbo Chen}
\ead{flyingnb@126.com}

\author[mymainaddress,mysecondaryaddress]{Xiaochun Liu\corref{mycorrespondingauthor}}
\cortext[mycorrespondingauthor]{Corresponding author}
\ead{xcliu@whu.edu.cn}

\address[mymainaddress]{School of Mathematics and Statistics, Wuhan University,
Wuhan 430072, China}
\address[mysecondaryaddress]{Hubei Key Laboratory of Computational Sciences, Wuhan University, Wuhan 430072, China}

\begin{abstract}
In this paper, we deal with a singular quasilinear critical elliptic equation of Lichnerowicz type involving the $p$-Laplacian operator.
With the help of the subcritical approach from variational method,
we obtain the non-existence, existence, and multiplicity results under some given assumptions.
\end{abstract}

\begin{keyword}
$p$-Laplacian\sep  Critical exponent\sep Negative exponent\sep Variational methods\sep Compact Riemannian manifold.
\MSC[2010] 58J05\sep 35J20.
\end{keyword}

\end{frontmatter}


\section{Introduction and main results}

Let $(M,g)$ be a smooth compact Riemannian manifold of dimension $n\geq 3$ without boundary.
We investigate the following $p$-Laplacian Lichnerowicz type equation in $M$:
\begin{equation}\label{eq:M}
\left\{
\begin{aligned}
&\Delta_{p,g}u+h u^{p-1}=f(x)u^{p^{*}-1}+{a(x)}{u^{-p^*-1}}, \\
&u>0,
\end{aligned}
\right.
\end{equation}
where $1<p<n$, $f(x)$ and $a(x)$ are smooth functions on $M$, and $h$ is a \textbf{negative} constant.
Here $p^{*}=\frac{np}{n-p}$ is the critical Sobolev exponent for the embedding of $H_1^p(M)$ into Lebesgue spaces, and
$\Delta_{p,g}:= -\operatorname{div}_g(|\nabla_g u|_g^{p-2}\nabla_g u)$ is the \textit{$p$-Laplace-Beltrami operator} associated to the metric $g$ on $M$,
which is defined in local
coordinates by the expression
$$\Delta_{p,g}u =-\frac{1}{\sqrt{|g|}}\frac{\partial}{\partial x^i}(\sqrt{|g|}g^{ij}|\nabla_g u|_g^{p-2}\frac{\partial u}{\partial x^j}),$$
where $(g^{ij})$ is the inverse of the metric matrix $(g_{ij})$ and ${|g|}:={\textrm{det}(g_{ij})}$ is the determinant of the metric tensor.

Such type of equations arises from the Hamiltonian constraint equation for the Einstein-scalar field system in general relativity.
See for example, \cite{CIP1, CIP2, Ngo2016} and references therein.
In the semilinear case $p = 2$, with the help of the conformal method, one is led to a simple scalar equation,
which is named as the Einstein-scalar field Lichnerowicz equation (the Lichnerowicz equation, in short).
Such equations have been the subject of extensive study in recent years due to the nature of their origin.

In the case $p = 2$, there are interesting papers written by Ng\^{o} and Xu (see \cite{Ngo1}, \cite{Ngo2} and \cite{Ngo3}). In particular,
in \cite{Ngo1}, they obtained the non-existence, existence, and multiplicity results
for positive solutions for the following Lichnerowicz equation
\begin{equation}\label{e1.1}
\Delta_{g}u+hu=f(x)u^{2^{*}-1}+\frac{a(x)}{u^{2^*+1}},\quad u>0,\quad \text{in }M,
\end{equation}
where $h<0$ is a constant, $f(x)$ and $a(x)\geq0$ are smooth functions. We should mention that the method they used is based on the method of Rauzy (\cite{Rauzy})
for the prescribed scalar curvature problem on a compact Riemannian manifold with negative conformal invariant.
In \cite{Ma13}, Ma and Wei studied the stability and multiplicity to Lichnerowicz equation \eqref{e1.1} for  $h > 0$, $f(x) > 0$ and $a(x)\geq0$.
In \cite{Hebey3}, Hebey-Pacard-Pollack established some non-existence and existence results for positive solutions of \eqref{e1.1} for the case $h>0$.
In the papers \cite{CIP2} and \cite{Isen} of Choquet-Bruhat-Isenberg-Pollack and Isenberg, more advanced existence results were obtained via the sub- and supersolution method for elliptic equations. Some further interesting results on the Lichnerowicz equation have been obtained
in \cite{CIP1}, \cite{DrHe}, \cite{MaXu}, \cite{MaLi}, \cite{MaSun},  \cite{Prem15}, \cite{Zhao0} and \cite{Zhao1}.

For the case $p > 1$, the $p$-Laplacian Lichnerowicz equation is a special case of the following so-called
\textit{generalized scalar curvature type equation}:
\begin{equation}
\Delta_{p,g}u+h(x)u^{p-1}=f(x)u^{p^{*}-1}+g(x,u), \quad  u>0, \quad \text{in }M. \label{e1.2}
\end{equation}
Such equation is nonlinear, of degenerated elliptic type, and of critical Sobolev growth, which arises quite naturally in
many branches of mathematics. For instance, in differential geometry it is an extension of the equation of prescribed scalar curvature.
The existence of solutions for problem \eqref{e1.2} has been extensively studied, and
many fruitful results have been obtained. For example, when $g(x,u)=0$ ( i.e. $a(x)=0$ in \eqref{eq:M}),
Druet (\cite{Druet}) studied the existence of positive solutions
under suitable assumptions on manifold $M$, $h(x)$ and $f(x)$.
Later, Benalili and Maliki (\cite{Be06}) extended the corresponding results to the complete Riemannian manifolds.
We refer to Hebey (\cite{Hebey1}) for more details and nice applications.
Recently, Chen and Liu (\cite{ChenLiu}) investigated \eqref{e1.2} and obtain some existence results when $g(x,u)$ is a lower order perturbation in the sense that
$$\lim\limits_{|u|\to\infty} \frac{g(x,u)}{|u|^{p^*-1}}=0,\quad \text{uniformly for } x\in M.$$
More results on the equation \eqref{e1.2} on a Riemannian manifold $(M,g)$
can be found in \cite{Be09, Be10, Silva1, Zhao2, Zhao3} and the reference therein.

Motivated by the above-mentioned work, in this paper, we will establish the non-existence and existence theorems for \eqref{eq:M}.
The major difficulties come from the following three aspects: \textit{critical Sobolev embedding exponent}, \textit{a sign-changing potential $f(x)$}, and \textit{a negative power nonlinearity}.
To overcome these difficulties, we generalize the analysis techniques developed in \cite{Ngo1} for $p = 2$ (see also \cite{Rauzy}).
The approach is variational and based on the so-called subcritical approach which is widely known for solving the Yamabe problem (see, e.g., \cite{Aubin1, MS}).

Now, our first existence result reads as follows, which the function $f$ involved in the nonlinearity is of changing sign.
\begin{theorem} \label{theorem1}
Let $(M, g)$ be a smooth compact Riemannian manifold without boundary of dimension $n$ $(n \geq 3)$. Let $h<0$ be a constant,
$f$ and $a\geq 0$ be smooth functions on $M$ with $\int_Ma\,dv_g>0$, $\int_Mf\,dv_g<0$, $\sup_Mf>0$ and $|h|<\lambda_f$ where
$\lambda_f$ is given in \eqref{eq2.2}.
Moreover, suppose that the integral of $a$ satisfies
\begin{equation} \label{e1.3}
\int_{M}a\,dv_g< \frac{p}{2(n-p)}\Big(\frac{2n-p}{2(n-p)}\Big)^{\frac{2n}{p}-1}
\Big(\frac{|h|}{\int_{M}|f^-|dv_g}\Big)^{\frac{2n}{p}}\int_M|f^-|\,dv_g,
\end{equation}
where $f^-$ is the negative part of $f$. Then there exists a number $\mathcal{C}>0$ such that, if
\begin{equation} \label{e1.4}
{\sup_Mf}{\left(\int_{M}|f^-|\,dv_g\right)^{-1}}\leq \mathcal{C},
\end{equation}
problem \eqref{eq:M} admits at least two positive $C^{1,\alpha}(M)$ solutions with $\alpha \in (0, 1)$.
\end{theorem}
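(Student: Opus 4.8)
The plan is to follow the subcritical variational scheme of Rauzy and Ng\^o--Xu, adapted to the $p$-Laplacian. For a subcritical exponent $q\in(p,p^*)$ I would introduce the energy functional
$$I_q(u)=\frac{1}{p}\int_M\big(|\nabla_g u|_g^p+h|u|^p\big)\,dv_g-\frac{1}{q}\int_M f|u|^q\,dv_g+\frac{1}{p^*}\int_M a|u|^{-p^*}\,dv_g,$$
whose critical points formally solve the subcritical analogue of \eqref{eq:M}. The last term blows up wherever $u\to0$ on a set of positive measure meeting $\{a>0\}$, so it acts as a barrier that keeps candidate solutions strictly positive and bounded away from zero and forces $I_q(u)\to+\infty$ near the zero function; to make this rigorous I would first replace it by a truncated term $a(|u|+\varepsilon)^{-p^*}$ (and let $\varepsilon\to0$ at the end) or restrict to the positive cone of functions bounded below. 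The goal is to produce, for each $q$, two distinct critical points of $I_q$ and then pass to the limit $q\to p^*$.

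For the first solution I would produce a local minimizer. The hypotheses $h<0$ with $|h|<\lambda_f$ (the constant in \eqref{eq2.2}) and $\int_M f\,dv_g<0$ make the non-singular part of $I_q$ coercive on functions of controlled $L^p$-mass, while the singular term keeps the functional large near the origin; together with the smallness condition \eqref{e1.4} this creates a mountain-pass geometry in which $I_q$ possesses a strict local minimum $u_{1,q}$ at negative energy. Since $\sup_M f>0$, test functions concentrating where $f>0$ drive $I_q\to-\infty$, so $u_{1,q}$ is only a local minimum and the global infimum of $I_q$ is $-\infty$. I would obtain $u_{1,q}$ by the direct method (or Ekeland's principle) on the appropriate sublevel set and check it is a genuine interior critical point.

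The second solution comes from the mountain pass theorem applied around $u_{1,q}$: the valley at $u_{1,q}$, surrounded by the singular barrier, together with a far-away point of lower energy, yields a mountain-pass level $c_q>I_q(u_{1,q})$ and a second critical point $u_{2,q}$. The heart of the argument, and the main obstacle, is the passage $q\to p^*$, where compactness is lost through possible bubbling at the critical exponent. I would derive uniform a priori bounds on $u_{1,q}$ and $u_{2,q}$, extract weakly convergent subsequences, and control the defect of compactness by a concentration-compactness / Struwe-type decomposition for the $p$-Laplacian. The role of \eqref{e1.3} and \eqref{e1.4} is precisely to force $c_q$ to stay strictly below the first level at which compactness fails, a threshold determined by the best constant $K(n,p)$ in the Euclidean Sobolev inequality and by $\sup_M f$, and which increases as $\sup_M f$ decreases: the smallness of $\sup_M f\big(\int_M|f^-|\,dv_g\big)^{-1}$ raises this threshold, while the bound \eqref{e1.3} on $\int_M a\,dv_g$ controls the energy contributed by the singular term. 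Below the threshold the relevant Palais--Smale sequences are relatively compact, so (along a subsequence) $u_{1,q}\to u_1$ and $u_{2,q}\to u_2$ strongly, and both limits are positive weak solutions of \eqref{eq:M}; the energy gap $c_q>I_q(u_{1,q})$, preserved in the limit, keeps $u_1\neq u_2$.

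Finally, positivity and the uniform lower bound supplied by the singular term remove the singularity of the right-hand side along the solutions, and standard regularity theory for the $p$-Laplacian (Tolksdorf, DiBenedetto) upgrades $u_1,u_2$ to $C^{1,\alpha}(M)$ for some $\alpha\in(0,1)$. I expect the genuinely delicate points to be: (i) the fine energy estimate showing $c_q$ lies below the critical threshold, which is where \eqref{e1.3} and \eqref{e1.4} must be used quantitatively; and (ii) controlling the singular term uniformly in $q$ (and in the regularization $\varepsilon$) so that the limiting pair remains two \emph{distinct} solutions rather than collapsing onto one.
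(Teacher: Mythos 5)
Your subcritical skeleton --- regularized functional, a first solution by constrained minimization at negative energy, a second by mountain pass, then a limit --- matches the paper's architecture, but your plan for the passage $q\to p^*$ contains the genuine gap. You propose a Struwe/concentration-compactness decomposition for the $p$-Laplacian and claim that \eqref{e1.3} and \eqref{e1.4} force the mountain-pass level below the first bubbling threshold. Nothing in your sketch supports that comparison, and it is not how these hypotheses actually function: in the paper, \eqref{e1.3} is used only to shape the geometry of $k\mapsto\mu_{k,q}^{\varepsilon}$ (Lemma \ref{asymptotic1}(iii)--(iv): it makes $\mu_{k_0,q}^{\varepsilon}\le 0$ and gives a uniform upper bound $\mu$ on the second level), while \eqref{e1.4} enters twice, once to erect the wall of positive energy via the Rauzy-type comparison $\lambda_{f,\eta,q}\to\lambda_f$ (Proposition \ref{prop3.2}) and once, quantitatively, in Proposition \ref{prop4.2}. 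There is no estimate in the hypotheses comparing $\mu$ with a threshold of the form $\tfrac1n K(n,p)^{-n}(\sup_Mf)^{-(n-p)/p}$; to run your route you would need Aubin-type test-function estimates you never set up, plus a global-compactness theorem for the $p$-Laplacian on manifolds with the singular term $au^{-q-1}$, which is far from standard. The paper avoids the threshold entirely: for fixed $\varepsilon>0$ and $q<p^*$ the Palais--Smale condition holds for free (Proposition \ref{prop3.3}), and the limit $q\to p^*$ is handled with weak convergence only, identifying the weak limit of $|\nabla_g u_{i,q}|_g^{p-2}\nabla_gu_{i,q}$ by the Demengel--Hebey argument; strong gradient convergence is recovered a posteriori (Proposition \ref{prop4.3}).

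Two further concrete gaps. First, your distinctness argument --- ``the energy gap $c_q>I_q(u_{1,q})$ is preserved in the limit'' --- is precisely the delicate point, not a consequence: under weak convergence alone, $\int_M f(u_{i,q})^q\,dv_g$ need not converge to $\int_M f(u_i)^{p^*}\,dv_g$, so the gap can collapse. The paper's mechanism is a Moser-type improvement: testing the equation with $(u_{i,q})^{1+p\delta}$ yields, when $\sup_Mf<\mathcal{C}_2$ as in \eqref{e4.39}, a bound on $\|u_{i,q}\|_{p^*(1+\delta)}$ uniform in $q$, whence $(u_{i,q})^q\rightharpoonup(u_i)^{p^*}$ weakly in $L^{1+\delta}(M)$ and the energies converge; combined with the uniform strictly negative bound \eqref{e3.6.2}, which is independent of $q$ and $\varepsilon$, this keeps $u_1\neq u_2$. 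Second, your statement that the ``uniform lower bound is supplied by the singular term'' is a misattribution: the bound that makes both limits ($\varepsilon\to0$, then $q\to p^*$) work is Lemma \ref{lemma 2.2}, proved by evaluating the equation at a minimum point and using $h<0$ together with the fact that $f$ must be negative there, giving $\min_Mu\ge\min\{(h/\inf_Mf)^{1/(p^{\flat}-p)},1\}$ independently of $q$ and $\varepsilon$; the singular term alone yields no $\varepsilon$-uniform bound, and without this estimate you can neither pass to the limit in $a(x)u^{-q-1}$ by dominated convergence nor invoke Lemma \ref{lemma 2.1}(ii) to get $C^{1,\alpha}$ regularity. Finally, note that the paper's argument runs under the auxiliary restriction \eqref{e4.1}, later removed by a scaling argument that assembles $\mathcal{C}=\min\{\mathcal{C}_1,\eta_0\mathcal{C}_2/(|h|p^*)\}$; some such normalization step is unavoidable in a complete proof and is absent from your plan.
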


As a remark, by straightforward calculus, a necessary condition for Eq. \eqref{eq:M} to admit a positive solution is $\int_{M}f\,dv_g<0$.
As another remark, applying Picone's identity for $p$-Laplacian, we also have $|h|\leq \lambda_f$ if \eqref{eq:M} admits a positive solution (see, e.g., \cite{Maliki,Ngo1}).

If we assume that $f$ does not change sign in the sense that $f\leq0$ but not strictly negative in $M$, or $\sup_Mf<0$, we then obtain
the following result.
\begin{theorem} \label{theorem2}
Let $(M, g)$ be a smooth compact Riemannian manifold of dimension $n$ $(n \geq 3)$ without boundary. Let $h<0$ be a constant,
$f$ and $a$ be smooth functions on $M$ with $a\geq 0$ in $M$ and $|h|<\lambda_f$. Moreover, we assume one of the following conditions holds:
\begin{itemize}
\item[\textup{(1)}]$f\leq0$ but not strictly negative;
\item[\textup{(2)}] $\sup_Mf<0$.
\end{itemize}
Then problem \eqref{eq:M} possesses a positive solution $u\in C^{1,\alpha}(M)$ for some $\alpha \in (0, 1)$.
\end{theorem}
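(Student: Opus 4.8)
The plan is to exploit the sign condition $f\le 0$, which deletes the ``bad'' critical term from the energy and thereby reduces the problem to a \emph{coercive} minimization whose only genuine difficulty is the singular negative-power nonlinearity. Writing the energy associated to \eqref{eq:M} as
\begin{equation}
I(u)=\frac1p\int_M|\nabla_g u|_g^p\,dv_g-\frac{|h|}p\int_M|u|^p\,dv_g+\frac1{p^*}\int_M|f|\,|u|^{p^*}\,dv_g+\frac1{p^*}\int_M a\,|u|^{-p^*}\,dv_g,
\end{equation}
every term except $-\frac{|h|}p\int_M|u|^p$ is nonnegative and weakly lower semicontinuous on $H_1^p(M)$: because $f\le0$ the critical term $\int_M|f|\,|u|^{p^*}$ now enters with a favorable sign, so by Fatou it is lower semicontinuous along weakly convergent sequences and \emph{no} concentration-compactness analysis is required. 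Note also that, since $f\le0$, the hypothesis $|h|<\lambda_f$ is automatic, the threshold $\lambda_f$ in \eqref{eq2.2} being governed by the positive part of $f$.

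First I would follow the stated subcritical scheme: for $q=p^*-\varepsilon$ with $\varepsilon\downarrow0$, minimize the corresponding subcritical functional $I_\varepsilon$, for which the embedding $H_1^p(M)\hookrightarrow L^q(M)$ is compact. Coercivity of $I_\varepsilon$ is immediate in case (2), where $\sup_Mf<0$ forces $|f|\ge c_0>0$ and the term $c_0\|u\|_q^q$ dominates $\|u\|_p^p$; in case (1), where $|f|$ may vanish, coercivity follows by combining $\|\nabla_g u\|_p^p$ with $\int_{\{f<0\}}|u|^q$ and the singular term through a Poincar\'e-type inequality on the region where $f<0$. To cope with the singularity I would first regularize the negative power (replacing $|u|^{-q}$ by $(|u|+\delta)^{-q}$) to obtain a differentiable functional, minimize it to produce a positive $u_{\varepsilon,\delta}$ solving the regularized Euler--Lagrange equation, and then remove the regularization.

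The heart of the argument, and the step I expect to be the main obstacle, is a \emph{uniform positive lower bound} $u_{\varepsilon}\ge c>0$ independent of $\varepsilon$ (and $\delta$). Finiteness of the energy already yields $\int_M a\,u^{-p^*}\,dv_g<\infty$, hence $u>0$ a.e. on $\{a>0\}$, since the singular term blows up as $u\to0$ and so prevents the minimizers from degenerating. I would upgrade this to a genuine pointwise bound by a maximum-principle argument at the minimum point of $u_\varepsilon$ (where $\Delta_{p,g}u_\varepsilon\le0$ yields $a\,u_\varepsilon^{-q-1}\le|f|\,u_\varepsilon^{q-1}$, bounding $u_\varepsilon$ below where $a>0$) together with the Harnack inequality for positive solutions of the $p$-Laplacian on the compact manifold $M$, which propagates local positivity to a uniform bound. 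Once such a bound is in hand, the singular term acts as a bounded potential, and because $f\le0$ the right-hand side carries no growing critical source, so standard elliptic estimates furnish a uniform $L^\infty$ bound as well.

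Finally, with uniform $H_1^p$ and two-sided $L^\infty$ bounds, I would pass to the limit $\varepsilon\to0$. The favorable sign of the critical term rules out loss of compactness, so $u_\varepsilon\to u$ strongly in $H_1^p(M)$ and $u$ is a nonnegative weak solution of \eqref{eq:M} inheriting the bound $u\ge c>0$; in particular the singular term is well defined at the limit. The regularity $u\in C^{1,\alpha}(M)$ then follows from the standard regularity theory for quasilinear equations of $p$-Laplacian type, the right-hand side being bounded and H\"older continuous thanks to $u\ge c>0$. This parallels, in the simpler setting $f\le0$, the scheme of Ng\^o and Xu in \cite{Ngo1} for $p=2$, the essential simplification being that the sign of $f$ makes the critical exponent harmless and concentrates all the difficulty in the singular lower bound.
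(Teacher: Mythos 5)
Your overall skeleton (regularize the singular term, minimize a subcritical functional, obtain a uniform positive lower bound, pass to the limit) matches the paper's, but two of your load-bearing claims are wrong or beg the question. First, the assertion that ``since $f\le 0$, the hypothesis $|h|<\lambda_f$ is automatic, the threshold being governed by the positive part of $f$'' misreads the definition: in \eqref{eq2.2}--\eqref{eq2.3} the admissible set $\mathcal{A}$ is built from $|f^-|$, the \emph{negative} part, so with $f\le 0$ its members are exactly the nonnegative functions vanishing a.e.\ on $\{f<0\}$. When $\{f=0\}$ has positive measure, $\lambda_f$ is a Dirichlet-type first eigenvalue of the zero set and can be arbitrarily small; only in case (2), where $\mathcal{A}=\emptyset$ and $\lambda_f=+\infty$, is the hypothesis vacuous (the paper treats these as Cases I and II in Section 5). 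This error propagates into your coercivity claim for case (1): coercivity genuinely \emph{fails} when $|h|>\lambda_f$ --- take $u\ge 0$ concentrated on $\{f=0\}$ with $\|\nabla_g u\|_p^p<|h|\,\|u\|_p^p$; then along $tu$, $t\to+\infty$, the $\delta$-regularized functional tends to $-\infty$ like $-t^p$, since the $f$-term does not see $u$ and the regularized singular term is bounded. So your ``Poincar\'e-type inequality on the region where $f<0$'' is precisely the hard step, and it is not a standard Poincar\'e inequality: one must show that functions whose weighted mass $\int_M|f^-||u|^q\,dv_g$ is small relative to $\|u\|_q^q$ satisfy $\|\nabla_g u\|_p^p\ge(\lambda_f-\delta)\|u\|_p^p$ with $\delta$ controlled uniformly as $q\nearrow p^*$. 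That is the Rauzy-type approximation $\lambda_{f,\eta,q}\to\lambda_f$ of Lemmas \ref{lemma3.3}--\ref{lemma3.5} (adapted in Lemma \ref{lemma5.1} and Proposition \ref{prop5.1}), proved by a two-stage weak-compactness argument; it cannot be waved through.

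Second, your uniform lower bound mechanism is both shakier than necessary and incomplete. The inequality you extract at the minimum point, $a\,u_\varepsilon^{-q-1}\le |f|\,u_\varepsilon^{q-1}$, gives nothing when $a$ vanishes at the minimum point, and the proposed Harnack propagation from $\{a>0\}$ requires uniform structural constants (hence a prior $L^\infty$ bound and a lower bound on some integral norm of $u_\varepsilon$), none of which you establish uniformly in $\varepsilon$, $\delta$ and $q$. The paper's Lemma \ref{lemma 2.2} avoids all of this: at the minimum point $x_0$ of a positive solution of \eqref{eq:M2}, discarding the nonnegative singular term gives $h\,u^{p-1}\ge f(x_0)\,u^{q-1}$, which forces $f(x_0)<0$ and yields $u(x_0)\ge \bigl(h/\inf_M f\bigr)^{1/(q-p)}\ge \min\bigl\{\bigl(h/\inf_M f\bigr)^{1/(p^\flat-p)},1\bigr\}$, a bound uniform in $\varepsilon>0$ and $q\in(p^\flat,p^*)$, requiring only $\inf_M f<0$ (true here since $f\not\equiv 0$) and no Harnack inequality at all; note also that since $C^{1,\alpha}$ solutions of the degenerate equation need not be classical, the paper runs this argument on the nondegenerate approximation \eqref{eq2.7} and lets $\eta\to 0$, a technical layer your sketch omits. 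With these two repairs --- the Rauzy machinery for coercivity under the (essential, not automatic) hypothesis $|h|<\lambda_f$, and the $h$--$f$ minimum-point bound in place of the $a$-term/Harnack argument --- your global-minimization route becomes essentially the paper's proof, which minimizes over the annulus $k_*\le\|u\|_q^q\le k_{**}$ where $\mu_{k,q}^{\varepsilon}\to+\infty$ at both ends and then passes to the limits $\varepsilon\to 0$, $q\to p^*$ as in Proposition \ref{prop4.1}.
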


Let us point out that in Theorem \ref{theorem2} for the case $p=2$, $|h|<\lambda_f$ is a necessary and sufficient solvability condition such that
Eq. \eqref{eq:M} admits a positive solution (see \cite{Ngo1} for more details). However, due to the quasilinear case $1<p<n$, it is difficult to obtain the same necessary and sufficient condition $|h|<\lambda_f$ as in the case $p=2$. Instead of  $|h|<\lambda_f$, we can only get $|h|\leq\lambda_f$ as the necessary condition in Theorem \ref{theorem2}.  We will focus on this problem in the future work.
Now, we describe the proof of our results briefly.
Due to the presence of a term with critical exponent and a term with a negative power, we first investigate the following $\varepsilon$-approximating subcritical equation:
\begin{equation}\label{eq:M2}
\Delta_{p,g}u+h|u|^{p-2}u=f(x)|u|^{q-2}u+\frac{a(x)u}{(u^{2}+\varepsilon)^{\frac{q}{2}+1}},
\end{equation}
for $\varepsilon >0$ is small and $q\in (p, p^*)$ is sufficiently close to $p^*$.
Based on Mountain Pass Lemma and minimization method, we obtain the existence results for \eqref{eq:M2}.
With the aid of the subcritical approach from variational method, we will show that solutions of \eqref{eq:M} exist as first $\varepsilon\searrow0$
and then $q\nearrow p^*$ under some given assumptions. We should also mention that though our method is partly similar as the arguments of Ng\^{o} and Xu (\cite{Ngo1}),
some technical difficulties are completely different in the quasilinear setting.
Moreover, compared with the results for $p=2$, our study on $p$-Lichnerowicz equation is generally harder.

The rest of this paper is organized as follows. In Section 2, we give some notations and prove some basic properties of solutions, including regularity and a non-existence result.
In Section 3, we perform analysis for the energy functional associated to \eqref{eq:M2}. In Section 4, we prove Theorem \ref{theorem1} and finally in Section 5, we complete the proof of Theorem \ref{theorem2}.

\section{Preliminary}
Let $(M, g)$ be a smooth compact Riemannian manifold of dimension $n\geq3$.  For simplicity,
we assume that the manifold $M$ has \textbf{unit volume}, i.e., $\textup{Vol}_g(M)=1$.
Let $L^p(M)$ for $1<p<n$ be the usual Lebesgue space on $(M, g)$.
For simplicity, we denote by $\|\cdot\|_p$ the $L^p$-norm, that is, $\|u\|_p=\left(\int_M|u|^p\,dv_g\right)^{1/p}$ for any $u\in L^p(M)$.
The Sobolev space $H_1^{p}(M)$ is defined as the completion of $C^{\infty}(M)$ with respect to the Sobolev norm
\begin{equation*}
\|u\| = \Big(\int_{M}|\nabla_{g} u|_g^{p}\,dv_g+\int_{M}|u|^{p}\,dv_g\Big)^{\frac{1}{p}}.
\end{equation*}
By the well-known Sobolev inequality, we know that for any $\varepsilon >0$,
there exists a constant $A=A(p, \varepsilon)$ such that for any $u \in H_1^{p}(M)$,
\begin{equation}\label{eq2.1}
\|u\|_{p^*}^{p}\leq (K(n,p)^p+\varepsilon)\|\nabla_{g} u\|_p^{p}+A\|u\|_p^{p},
\end{equation}
where $K(n,p)$ is the best constant for the embedding of $H_1^p(\mathbb{R}^n)$ into $L^{p^*}(\mathbb{R}^n)$, that is,
$$K(n,p)^{-1}=\inf_{u\in C^{\infty}_c (\mathbb{R}^n)\backslash\{0\}}\frac{\|\nabla u\|_{p}}{\|u\|_{p^*}},$$
where the norms here are corresponding to the Euclidean metric. For the explicit values of $K(n, p)$, we refer to Aubin \cite{Aubin0} and Talenti \cite{Tal76}.
Since we are interested in the critical case, throughout this paper, we always assume $q\in (p^{\flat},p^*)$, where
$p^{\flat}=\frac{p+p^*}{2}=\frac{p(2n-p)}{2(n-p)}$ is a dimensional constant. Let $f(x)$ be a smooth function on $M$, $f^-= \min\{f, 0\}$ and $f^+=\max\{f,0\}$.
We define the following two numbers
\begin{equation}\label{eq2.2}
\lambda_f=
\begin{cases}
\inf\limits_{u\in \mathcal{A}}\frac{\int_{M}|\nabla_g u|_g^{p}\,dv_g}{\int_{M}|u|^{p}\,dv_g} & \text{if }\mathcal{A}\neq \emptyset, \\
+\infty & \text{if }\mathcal{A}= \emptyset,
\end{cases}
\end{equation}
with
\begin{equation}\label{eq2.3}
\mathcal{A}=\left\{u\in H_1^p(M):u\geq0, u\not\equiv 0, \int_{M}|f^-|u^{p-1}\,dv_g=0\right\}.
\end{equation}
For $q\in (p^{\flat},p^*)$ and $\eta >0$, we define
\begin{equation}\label{eq2.4}
\lambda_{f,\eta,q}=\inf_{u\in \mathcal{A}_{\eta,q}}\frac{\int_{M}|\nabla_g u|_g^{p}\,dv_g}{\int_{M}|u|^{p}\,dv_g}
\end{equation}
with
\begin{equation}\label{eq2.5}
\mathcal{A}_{\eta,q}=\left\{u\in H_1^p(M):\|u\|_q=1, \int_{M}|f^-||u|^{q}\,dv_g=\eta\int_{M}|f^-|\,dv_g\right\}.
\end{equation}
Obviously, both $\lambda_f$ and $\lambda_{f,\eta,q}$ are non-negative. Moreover, the elements in $\mathcal{A}$ are regarded as functions that vanish on the support of $f^-$.
Similar to the case of $p=2$, the number $\lambda_f$ will play an important role on solving \eqref{eq:M}.
We will approximate $\lambda_f$ by $\lambda_{f,\eta,q}$ as proposed in \cite{Ngo1} and \cite{Rauzy} in Section \ref{lfeq} below.

For $p=2$, it is well-known that the solutions to \eqref{eq:M} is of $C^{\infty}(M)$. For $p\neq2$, since the $p$-Laplacian is degenerate at points where $\nabla_gu=0$,
the regularity of weak solutions to \eqref{eq:M2} is in general of $C^{1, \alpha}(M)$ for some $\alpha\in (0,1)$ but not of $C^2(M)$ (for instance, see \cite{Veron89, Tolk84}).
Inspired by the ideas from \cite{Ngo1, Druet}, we have the following regularity result.
\begin{lemma} \label{lemma 2.1}
Let $p<q\leq p^*$, $\varepsilon\geq 0$ fixed and $u\in H_1^p(M)$  be a weak solution of \eqref{eq:M2}.
Then we have:
\begin{itemize}
\item[\textup{(i)}] If $\varepsilon>0$, then $u\in C^{1, \alpha}(M)$ for some $\alpha \in (0, 1)$.

\item[\textup{(ii)}] If $\varepsilon=0$ and $u^{-1}\in L^r(M)$ for all $r\geq 1$, then $u\in C^{1, \alpha}(M)$ for some $\alpha \in (0, 1)$.
\end{itemize}
\end{lemma}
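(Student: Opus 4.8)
The plan is to reduce the lemma to the by-now-classical $C^{1,\alpha}$ regularity theory for degenerate quasilinear operators (Tolksdorf \cite{Tolk84}, DiBenedetto \cite{Veron89}), following the overall scheme of \cite{Ngo1, Druet}. That theory applies once the right-hand side of \eqref{eq:M2} is shown to lie in a good Lebesgue space, so the entire difficulty is concentrated in the \emph{a priori} bound $u\in L^\infty(M)$; after that both (i) and (ii) are routine. Since Hölder regularity is a local matter and $M$ is compact without boundary, I would work in a finite atlas of normal coordinate charts, where $\Delta_{p,g}$ is uniformly comparable to the Euclidean $p$-Laplacian with smooth coefficients, and patch the resulting local estimates to obtain $u\in C^{1,\alpha}(M)$ globally.

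First I would establish $u\in L^\infty(M)$ by a Moser iteration. Writing \eqref{eq:M2} as $\Delta_{p,g}u=-h|u|^{p-2}u+f|u|^{q-2}u+S_\varepsilon(u)$ with $S_\varepsilon(u)=a(x)u\,(u^2+\varepsilon)^{-q/2-1}$, the lower-order term $-h|u|^{p-2}u$ is harmless, and the singular term is controlled: for $\varepsilon>0$ the map $t\mapsto t(t^2+\varepsilon)^{-q/2-1}$ is bounded on $\mathbb{R}$, so $|S_\varepsilon(u)|\le C(\varepsilon,\|a\|_\infty)$; for $\varepsilon=0$ one has $S_0(u)=a(x)\,\mathrm{sgn}(u)\,|u|^{-q-1}\ge 0$ a.e.\ (here $a\ge0$ and $u\neq0$ a.e.\ by hypothesis), and when testing with a positive power its contribution splits over $\{|u|\ge1\}$, where it is dominated by $\|a\|_\infty\int|u|^{\beta}$, and over $\{|u|<1\}$, where it is bounded by $\|a\|_\infty\,\||u|^{-1}\|_{q+1}^{q+1}<\infty$, using the hypothesis $u^{-1}\in L^{q+1}(M)$. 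Thus in both cases the only genuinely critical term is $f|u|^{q-2}u$ with $q\le p^*$, which I would handle by the Brezis--Kato/Trudinger device: factor $f|u|^{q-2}u=V\,|u|^{p-2}u$ with $V:=f|u|^{q-p}$, and note that $V\in L^{n/p}(M)$, since $u\in L^{p^*}(M)$ and $q-p\le p^*-p=\frac{p^2}{n-p}$ give $|u|^{q-p}\in L^{p^*/(q-p)}(M)$ with $p^*/(q-p)\ge n/p$. Testing with the truncated power $\phi=u\min(|u|,L)^{p(k-1)}\in H_1^p(M)$, applying the Sobolev inequality \eqref{eq2.1} to the gradient term, and splitting $V=V_1+V_2$ with $\|V_1\|_{n/p}$ as small as desired (the tail $V\,\mathbf{1}_{\{|V|>T\}}$) and $V_2\in L^\infty$, the $V_1$-contribution is absorbed into the left-hand side while the $V_2$-contribution is of lower order. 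Letting $L\to\infty$ and iterating upgrades $u$ to $L^s(M)$ for every finite $s$, whence the right-hand side lies in $L^\sigma(M)$ for some $\sigma>n/p$ and a final Moser/Serrin step gives $u\in L^\infty(M)$.

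With $u\in L^\infty(M)$ in hand I would conclude as follows. In case (i), $\varepsilon>0$, every term on the right of \eqref{eq:M2} is now bounded, so $\Delta_{p,g}u=G$ with $G\in L^\infty(M)$, and \cite{Tolk84, Veron89} give $u\in C^{1,\alpha}(M)$. In case (ii), $\varepsilon=0$, the terms $-h|u|^{p-2}u$ and $f|u|^{q-2}u$ are bounded, while the singular term $a(x)\,\mathrm{sgn}(u)|u|^{-q-1}$ lies in $L^r(M)$ for every finite $r$, because $a$ is bounded and $u^{-1}\in L^r(M)$ for all $r$ by hypothesis; hence $\Delta_{p,g}u=G$ with $G\in L^r(M)$ for all $r<\infty$, and taking $r$ large the same quasilinear theory yields $u\in C^{1,\alpha}(M)$.

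The main obstacle is the $L^\infty$ bound in the borderline case $q=p^*$, where $f|u|^{q-2}u$ has exactly critical growth and the naive bootstrap stalls; the delicate point is the absorption in the Brezis--Kato step, which works precisely because the critical factor $|u|^{p^*-p}$ sits in $L^{n/p}(M)$ and its tail can be made small enough to be swallowed by the near-optimal Sobolev constant $K(n,p)^p+\varepsilon$ in \eqref{eq2.1}. A secondary care, specific to (ii), is that the nonnegative singular term must not spoil the upper Moser iteration; this is exactly what the integrability hypothesis $u^{-1}\in L^r(M)$ secures, both for closing the iteration and for supplying the $L^r$ input in the final regularity step.
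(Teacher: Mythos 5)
Your proof is correct, and at its core it rests on the same two observations as the paper's: for $\varepsilon>0$ the singular term is globally bounded because $t\mapsto t(t^{2}+\varepsilon)^{-\frac{q}{2}-1}$ is bounded on $\mathbb{R}$, and for $\varepsilon=0$ the critical term factors as $V|u|^{p-2}u$ with $V=f|u|^{q-p}\in L^{n/p}(M)$ (your exponent computation $p^{*}/(q-p)\ge n/p$ is exactly the paper's inequality $\frac{q}{q-p}\ge\frac{n}{p}$), while the hypothesis $u^{-1}\in L^{r}(M)$ takes care of $a|u|^{-q-1}$. The difference is one of economy rather than substance. The paper treats these checks as essentially the whole proof: it rewrites \eqref{eq:M2} once as $\Delta_{p,g}u+\tilde h(x,u)=0$ with $|\tilde h(x,u)|\le C_{1}|u|^{p^{*}-1}+C_{2}$ in case (i), and once as $\Delta_{p,g}u+K(x)|u|^{p-2}u=\tilde f(x)$ with $K,\tilde f\in L^{n/p}(M)$ in case (ii), and then invokes the regularity results of \cite{Druet} (Theorem 2.3 there) as a black box. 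You instead re-derive that black box: the Brezis--Kato/Moser iteration with truncated test functions $u\min(|u|,L)^{p(k-1)}$, the absorption of the small tail of $V$ against the near-optimal Sobolev constant in \eqref{eq2.1}, the Serrin step from a right-hand side in $L^{\sigma}$, $\sigma>n/p$, to $u\in L^{\infty}(M)$, and finally the local $C^{1,\alpha}$ theory of \cite{Tolk84,Veron89} patched over a finite atlas. What your route buys is self-containedness and an explicit record of where each hypothesis enters --- in particular that $u^{-1}\in L^{q+1}(M)$ already suffices to close the iteration, while the full family $u^{-1}\in L^{r}(M)$ for all $r$ is what places the right-hand side in $L^{r}$ for $r$ large in the final $C^{1,\alpha}$ step, which is the quantitatively correct input there. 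What the paper's route buys is brevity, at the cost of delegating all the analytic work to the citation.

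One minor slip, worth flagging because it is stated as if load-bearing: for $\varepsilon=0$ you assert $S_{0}(u)=a\,\mathrm{sgn}(u)|u|^{-q-1}\ge 0$ a.e., which fails wherever $u<0$ (the lemma does not presuppose $u\ge 0$, only $u^{-1}\in L^{r}$). This is harmless in your argument, since the estimates you actually run use only $|S_{0}(u)|\le \|a\|_{\infty}|u|^{-q-1}$ together with the splitting over $\{|u|\ge 1\}$ and $\{|u|<1\}$, and never invoke the sign; you should simply delete the positivity claim.
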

\begin{proof}(i) We first rewrite \eqref{eq:M2} as $\Delta_{p,g}u+\tilde{h}(x,u)=0$, where
\begin{equation*}
\tilde{h}(x,u)=h|u|^{p-2}u-f(x)|u|^{q-2}u-\frac{a(x)u}{(u^2+\varepsilon)^{\frac{q}{2}+1}}.
\end{equation*}
Notice that $p<q\leq p^*$, and then we have
\begin{equation*}
|\tilde{h}(x,u)|\leq C_1|u|^{p^*-1}+C_2
\end{equation*}
for some positive constants $C_1$, $C_2$.
Accordingly, by regularity result \cite[Theorem 2.3]{Druet}, we get that $u\in C^{1, \alpha}(M)$ for some $\alpha \in (0, 1)$.

(ii) We rewrite \eqref{eq:M2} as $\Delta_{p,g}u+K(x)|u|^{p-2}u=\tilde{f}(x)$, with
\begin{equation*}
K(x)=h-f(x)|u|^{q-p}\text{\quad and\quad} \tilde{f}(x)=\frac{a(x)}{|u|^{q}u}.
\end{equation*}
By the Sobolev embedding and the fact that $\frac{q}{q-p}\geq\frac{n}{p}$ we have
\begin{equation*}
|u|^{q-p}\in L^{\frac{n}{p}}(M) \text{\quad and\quad} K(x) \in L^{\frac{n}{p}}(M).
\end{equation*}
Due to the assumptions in case (ii), we have of course
$\tilde{f}(x) \in L^{\frac{n}{p}}(M)$. Accordingly, by regularity results (\cite{Druet}), we get that $u\in C^{1, \alpha}(M)$ for some $\alpha \in (0, 1)$.
\end{proof}

In order to avoid the lack of regularity, let us first consider the following nondegenerate  equation
\begin{equation}\label{eq2.7}
-\operatorname{div}_g((\eta+|\nabla_g u|_g^2)^{\frac{p-2}{2}}\nabla_g u)=g_{\varepsilon}\quad \text{in } M
\end{equation}
for a parameter $\eta>0$, where $g_{\varepsilon}=-h|u|^{p-2}u+f(x)|u|^{q-2}u+\frac{a(x)u}{(u^{2}+\varepsilon)^{\frac{q}{2}+1}}$
with the same assumptions on $f$, $h$ and $a$ as before. Then Equation \eqref{eq:M2} corresponds to the degenerate case $\eta=0$.
Since \eqref{eq2.7} is uniformly elliptic without singularities and the right hand side being
$C^1$-continuous, the solutions $u_{\eta}$ are in $C^{2, \delta}(M)$ for some $\delta\in (0,1)$ and the existence of
$u_{\eta}$ is also ensured by the classical theory (see \cite{Lady68}).
With these information in hand we have the following result which plays an important role on the proof of main results.
\begin{lemma} \label{lemma 2.2}
Let $u\in C^{1, \alpha}(M)$  be a positive solution of \eqref{eq:M2} with $h<0$.
Then, there holds
\begin{equation*}
\min_M{u}\geq \min\Big\{\Big(\frac{h}{\inf_M{f}}\Big)^{\frac{1}{p^{\flat}-p}},1\Big\}>0
\end{equation*}
for any $q\in (p^{\flat}, p^*)$ and any $\varepsilon>0$.
\end{lemma}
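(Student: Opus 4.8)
\emph{Proof proposal.} My plan is to prove this by a minimum principle. First I would note that, since $u\in C^{1,\alpha}(M)$ and $M$ is compact, $u$ attains its minimum at some $x_0\in M$, where $\nabla_g u(x_0)=0$. The heuristic driving the estimate is that at a minimum the degenerate operator should satisfy $\Delta_{p,g}u(x_0)\le 0$. Granting this, evaluating \eqref{eq:M2} at $x_0$ (with $u>0$, so that $|u|^{p-2}u=u^{p-1}$), discarding the nonnegative term generated by $a\ge 0$, and using $h<0$ would give
\begin{equation*}
f(x_0)\,u(x_0)^{q-1}-h\,u(x_0)^{p-1}\le \Delta_{p,g}u(x_0)\le 0 ,
\end{equation*}
hence, dividing by $u(x_0)^{p-1}>0$, $f(x_0)(\min_M u)^{q-p}\le h<0$. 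In particular $f(x_0)<0$, so $\inf_M f<0$ (the regime relevant to Theorems~\ref{theorem1}--\ref{theorem2}); since $f(x_0)\ge\inf_M f$, dividing by the negative number $\inf_M f$ then yields $(\min_M u)^{q-p}\ge t$, where $t:=h/\inf_M f>0$.

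The one genuinely delicate point, and the main obstacle, is that the $p$-Laplacian is degenerate precisely where $\nabla_g u=0$ and $u$ is only $C^{1,\alpha}$, so the inequality $\Delta_{p,g}u(x_0)\le 0$ is not literally available. This is exactly the role of the nondegenerate regularization \eqref{eq2.7}. The plan is to run the argument on the $C^{2,\delta}(M)$ solutions $u_\eta$ of \eqref{eq2.7}: at a minimum point $x_\eta$ of $u_\eta$ one has $\nabla_g u_\eta(x_\eta)=0$, so the left-hand side of \eqref{eq2.7} collapses to $-\eta^{\frac{p-2}{2}}\operatorname{div}_g(\nabla_g u_\eta)(x_\eta)$; since the Hessian of the $C^2$ function $u_\eta$ is positive semidefinite at its minimum, $\operatorname{div}_g(\nabla_g u_\eta)(x_\eta)\ge 0$, whence this quantity is $\le 0$ and therefore $g_\varepsilon(x_\eta)\le 0$, the rigorous replacement for $\Delta_{p,g}u\le 0$. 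Dropping the nonnegative $a$-term and arguing as above gives $u(x_\eta)^{q-p}\ge t$; passing $\eta\searrow 0$ with $u_\eta\to u$ uniformly (so that $\min_M u_\eta=u_\eta(x_\eta)\to\min_M u$ and, along a subsequence, $x_\eta\to x_0$) transfers the bound and produces $(\min_M u)^{q-p}\ge t$. I expect the control of this limit, namely the convergence of $u_\eta$ and of its minimum points, to be the technical heart. (An alternative that sidesteps the degeneracy entirely is to test the weak form of \eqref{eq:M2} with $\varphi=(k-u)^+$ for $k=t^{1/(q-p)}$: the gradient contribution is $\le 0$, the $a$-term drops, and on $\{u<k\}$ the integrand $u^{p-1}(h-fu^{q-p})(k-u)$ is strictly negative, forcing $\{u<k\}=\emptyset$, i.e. $\min_M u\ge t^{1/(q-p)}$.)

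It remains to convert the $q$-dependent bound $(\min_M u)^{q-p}\ge t$ into the stated one. Recall $q\in(p^\flat,p^*)$, so $q-p>p^\flat-p=\frac{p^2}{2(n-p)}>0$. If $t\ge 1$ then $t^{1/(q-p)}\ge 1$, so $\min_M u\ge 1\ge\min\{t^{1/(p^\flat-p)},1\}$; if $0<t<1$ then, since $\tfrac{1}{q-p}<\tfrac{1}{p^\flat-p}$, we get $\min_M u\ge t^{1/(q-p)}>t^{1/(p^\flat-p)}=\min\{t^{1/(p^\flat-p)},1\}$. In either case
\begin{equation*}
\min_M u\ge \min\Big\{\Big(\tfrac{h}{\inf_M f}\Big)^{\frac{1}{p^\flat-p}},\,1\Big\}>0 ,
\end{equation*}
uniformly in $q\in(p^\flat,p^*)$ and in $\varepsilon>0$. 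The point of replacing $q$ by the fixed exponent $p^\flat$ is exactly to secure a bound independent of $q$, which is what later permits the passage to the limit $q\nearrow p^*$.
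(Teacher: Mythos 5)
Your proposal is correct and follows essentially the same route as the paper's proof: regularize with the nondegenerate equation \eqref{eq2.7}, apply the minimum principle at a minimum point $x_\eta$ of the classical solution $u_\eta$ (where $\nabla_g u_\eta=0$ forces the divergence term to have the right sign), drop the nonnegative $a$-term to deduce $f(x_\eta)<0$ and $(u_\eta(x_\eta))^{q-p}\geq h/\inf_M f$, pass to the limit $\eta\to 0$ using the uniform $C^{1,\alpha}$ bounds, and convert to a $q$-independent bound via the monotonicity of $t\mapsto t^{1/(q-p)}$ exactly as in the paper. Your parenthetical alternative via the test function $\varphi=(k-u)^+$ is a sound extra observation that would bypass the regularization, but your main argument coincides with the paper's.
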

\begin{proof}
Let $u_{\eta}$ be a positive classical solution in $C^{2,\delta}(M)$ to \eqref{eq2.7}. From \cite{Veron89, Tolk84}, $u_{\eta}$
is bounded in $C^{1, \alpha}(M)$ independently of $\eta \in (0,1]$ and thus,
up to a subsequence, $u_{\eta}$ converges to $u$ in
$C^{1, \omega}(M)$ as $\eta\rightarrow 0$ for any $0<\omega<\alpha$.
Let us assume that $u_{\eta}$ achieves its minimum value at $x_{\eta}$. Notice that $u_{\eta}(x_{\eta})>0$ since $u_\eta(x)$ is a positive solution. We then have $\nabla_gu_{\eta}|_{x_{\eta}}=0$ and $\Delta_gu_{\eta}|_{x_{\eta}}\leq 0$. In particular, we have $$-\operatorname{div}_g((\eta+|\nabla_g u|_g^2)^{\frac{p-2}{2}}\nabla_g u)\big|_{x_{\eta}}\leq 0.$$
 Hence, we obtain
$$
h(u_{\eta}(x_{\eta}))^{p-2}u\geq f(x_{\eta})(u_{\eta}(x_{\eta}))^{q-1}+\frac{a(x_{\eta})u_{\eta}(x_{\eta})}{((u_{\eta}(x_{\eta}))^{2}+\varepsilon)^{\frac{q}{2}+1}}
\geq f(x_{\eta})(u_{\eta}(x_{\eta}))^{q-1}.
$$
Consequently, we get $f(x_{\eta})<0$ and thus $0<\frac{h}{f(x_{\eta})}\leq (u_{\eta}(x_{\eta}))^{q-p}$ which immediately implies
$$
\min_Mu_{\eta}\geq \left(\frac{h}{\inf_Mf}\right)^{\frac{1}{q-p}}\geq
\min\left\{\left(\frac{h}{\inf_Mf}\right)^{\frac{1}{p^{\flat}-p}},1\right\}
$$
for any $q\in (p^{\flat}, p^*)$. Now, taking $\eta\rightarrow 0$, we get the desired result.
\end{proof}

In the rest of this section, we derive a necessary condition for $a(x)$
such that the $p$-Laplacian Lichnerowicz equation \eqref{eq:M} admits no solution with finite $H_1^p$-norm.
Similar results can be found in \cite{Hebey3, Ngo1, Ngo3} for the case $p=2$.
\begin{proposition} \label{prop 2.3}
Let $(M, g)$ be a smooth compact Riemannian manifold without boundary of dimension $n \geq 3$. Let also $a(x)$, $f(x)$ be smooth functions on $M$
with $a(x)\geq 0$ in $M$ and $h$ a negative constant. If
\begin{equation*}
\int_M a^{\frac{np}{2np+n-p}}\,dv_g> \left(K(n,p)^p+1+A\right)^{\frac{2pn^2}{(2np+n-p)(n-p)}}{\Lambda}^{\frac{2p^2n^2}{(2np+n-p)(n-p)}}
\left(\int_M |f^-|^{p^*}\,dv_g\right)^{\frac{n-p}{2np+n-p}}
\end{equation*}
for some $\Lambda>0$, then the $p$-Laplacian Lichnerowicz equation \eqref{eq:M} has no positive solution $u$ with
energy $\|u\|\leq \Lambda$.
\end{proposition}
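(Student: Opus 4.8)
The plan is to argue by contraposition: assuming that \eqref{eq:M} admits a positive solution $u$ with $\|u\|\le\Lambda$, I will derive the reverse (non-strict) inequality
\[
\int_M a^{\frac{np}{2np+n-p}}\,dv_g \le (K(n,p)^p+1+A)^{\frac{2pn^2}{(2np+n-p)(n-p)}}\Lambda^{\frac{2p^2n^2}{(2np+n-p)(n-p)}}\Big(\int_M|f^-|^{p^*}\,dv_g\Big)^{\frac{n-p}{2np+n-p}},
\]
which contradicts the hypothesis. The crucial decision is the choice of test function: rather than multiplying \eqref{eq:M} by $u$ (which would leave a gradient term and force me to estimate $\int_M|f^-|u^{p^*}\,dv_g$, a quantity I cannot control by $\|f^-\|_{p^*}$ since $u$ only lies in $L^{p^*}$), I integrate the equation against the constant $1$. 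Because $\Delta_{p,g}u$ is a divergence, $\int_M\Delta_{p,g}u\,dv_g=0$, and I am left with the identity $h\int_M u^{p-1}\,dv_g=\int_M f u^{p^*-1}\,dv_g+\int_M a u^{-p^*-1}\,dv_g$.

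Since $h<0$ and $u>0$, the left-hand side is negative, so writing $f=f^++f^-$ and discarding the nonnegative term $\int_M f^+u^{p^*-1}\,dv_g$ yields the basic inequality
\[
\int_M a\, u^{-p^*-1}\,dv_g \le \int_M |f^-|\,u^{p^*-1}\,dv_g.
\]
Both sides are now matched to powers of $\|u\|_{p^*}$. For the right-hand side, H\"older's inequality with exponents $p^*$ and $p^*/(p^*-1)$ gives $\int_M|f^-|u^{p^*-1}\,dv_g\le \|f^-\|_{p^*}\|u\|_{p^*}^{p^*-1}$, the pairing being exactly balanced so that $u$ appears at the critical power $p^*$. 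For the left-hand side I set $\gamma=\frac{p^*}{2p^*+1}=\frac{np}{2np+n-p}$ and write $a^\gamma=(a\,u^{-p^*-1})^\gamma\, u^{(p^*+1)\gamma}$; H\"older with exponents $1/\gamma$ and $1/(1-\gamma)$ then gives $\int_M a^\gamma\,dv_g\le\big(\int_M a\,u^{-p^*-1}\,dv_g\big)^\gamma\|u\|_{p^*}^{p^*(1-\gamma)}$, the whole point of this choice of $\gamma$ being that $(p^*+1)\gamma/(1-\gamma)=p^*$, so that once again $u$ lands precisely in $L^{p^*}$.

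Combining the two estimates eliminates $\int_M a\,u^{-p^*-1}\,dv_g$ and, after collecting the powers of $\|u\|_{p^*}$ (the exponent simplifies via $p^*/\gamma-1=2p^*$), produces $(\int_M a^\gamma\,dv_g)^{1/\gamma}\le\|f^-\|_{p^*}\|u\|_{p^*}^{2p^*}$. Finally I invoke the Sobolev inequality \eqref{eq2.1} with the choice $\varepsilon=1$, namely $\|u\|_{p^*}^p\le(K(n,p)^p+1+A)\|u\|^p\le(K(n,p)^p+1+A)\Lambda^p$, substitute, and raise everything to the power $\gamma$; a direct check that $\gamma/p^*=\frac{n-p}{2np+n-p}$, that $2p^*\gamma=\frac{2p^2n^2}{(2np+n-p)(n-p)}$, and that $2p^*\gamma/p=\frac{2pn^2}{(2np+n-p)(n-p)}$ reproduces exactly the claimed inequality. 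The only real subtlety, and the step I expect to demand the most care, is the bookkeeping of exponents: one must verify that the single value $\gamma=\frac{np}{2np+n-p}$ simultaneously forces the critical power $p^*$ in both H\"older estimates and that $\gamma\in(0,1)$ (indeed $\gamma<\tfrac12$, since $p<n$) so that H\"older applies. Granting this, the contradiction—and hence the non-existence—is immediate.
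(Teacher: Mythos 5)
Your proposal is correct and follows essentially the same route as the paper's proof: integrating the equation against the constant $1$, using $h<0$ to bound $\int_M a\,u^{-p^*-1}\,dv_g$ by $\int_M |f^-|u^{p^*-1}\,dv_g$, applying H\"older twice with the same exponent $\beta=\gamma=\frac{p^*}{2p^*+1}=\frac{np}{2np+n-p}$, and closing with the Sobolev inequality \eqref{eq2.1} at $\varepsilon=1$. The exponent bookkeeping you flag as the delicate point checks out ($\gamma/p^*=\frac{n-p}{2np+n-p}$, $2p^*\gamma=\frac{2p^2n^2}{(2np+n-p)(n-p)}$, $2p^*\gamma/p=\frac{2pn^2}{(2np+n-p)(n-p)}$), so nothing further is needed.
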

\begin{proof}
Let $u$ be a positive solution of \eqref{eq:M}.
By integrating \eqref{eq:M} over $M$ and applying the divergence theorem, we have
\begin{equation}\label{eq2.2-1}
\int_M h(x)u^{p-1}\,dv_g=\int_M f(x)u^{p^{*}-1}\,dv_g+\int_M\frac{a(x)}{u^{p^*+1}}\,dv_g.
\end{equation}
Let $\beta=\frac{p^*}{2p^*+1}$. Using H\"{o}lder's inequality, we obtain
\begin{equation}\label{eq2.2-2}
\int_M a^{\beta}\,dv_g\leq \left(\int_M \frac{a}{u^{p^*+1}}\,dv_g\right)^{\beta}\left(\int_M u^{p^*}\,dv_g\right)^{1-\beta}.
\end{equation}
For the second term of the right-hand side of \eqref{eq2.2-1}, notice that $h<0$ and we get
\begin{equation}\label{eq2.2-3}
\int_M \frac{a}{u^{p^*+1}}\,dv_g=\int_M hu^{p-1}\,dv_g-\int_M fu^{p^{*}-1}\,dv_g\leq \int_M |f^-|u^{p^{*}-1}\,dv_g,
\end{equation}
while for the first term, we obtain immediately, by H\"{o}lder's inequality,
\begin{equation}\label{eq2.2-4}
\int_M |f^-|u^{p^{*}-1}\,dv_g\leq \left(\int_M |f^-|^{p^*}\,dv_g\right)^{\frac{1}{p^*}}\left(\int_M u^{p^*}\,dv_g\right)^{\frac{p^*-1}{p^*}}.
\end{equation}
Combining \eqref{eq2.2-1}-\eqref{eq2.2-4}, we finally have that
\begin{equation}\label{eq2.2-5}
\int_M a^{\beta}\,dv_g\leq\left(\int_M |f^-|^{p^*}\,dv_g\right)^{\frac{\beta}{p^*}}\left(\int_M u^{p^*}\,dv_g\right)^{1-\frac{\beta}{p^*}}.
\end{equation}
Now, suppose that $\|u\|\leq \Lambda$. By Sobolev inequality \eqref{eq2.1} with $\varepsilon=1$
and the fact that $1-\frac{\beta}{p^*}=\frac{2p^*}{2p^*+1}$, we deduce that
\begin{equation*}
\left(\int_M u^{p^*}\,dv_g\right)^{1-\frac{\beta}{p^*}}\leq\left(K(n,p)^p+1+A\right)^{\frac{2(p^*)^2}{(2p^*+1)p}}\|u\|^{\frac{2(p^*)^2}{2p^*+1}}.
\end{equation*}
This together with \eqref{eq2.2-5} implies
\begin{equation*}
\int_M a^{\frac{p^*}{2p^*+1}}\,dv_g\leq\left(K(n,p)^p+1+A\right)^{\frac{2(p^*)^2}{(2p^*+1)p}}{\Lambda}^{\frac{2(p^*)^2}{2p^*+1}}
\left(\int_M |f^-|^{p^*}\,dv_g\right)^{\frac{1}{2p^*+1}},
\end{equation*}
which is a contradiction to our assumption. This completes the proof.
\end{proof}
\begin{remark} \rm
Proposition \ref{prop 2.3} shows that it is reasonable and necessary to have some control on the integral $\int_M a\,dv_g$ as we did in Theorem \ref{theorem1}.
Moreover, concerning \eqref{eq2.2-5} and Proposition \ref{prop 2.3}, as in \cite{Ngo3}, one can estimate the integral $\int_M a^{\alpha}|f^-|^{\beta}\,dv_g$
from above in terms of $\|u\|$, where $\alpha$, $\beta$ are two positive constant.
This also enables us to establish a sufficient condition to guarantee the nonexistence of positive solutions of \eqref{eq:M}.
\end{remark}

\section{The analysis of the energy functionals}\label{sec3}
Throughout this section, we always assume that $\sup_M f>0$.
For each $q\in (p,p^*)$ and $k>0$, we introduce ${\mathcal{B}}_{k,q}$ a hyper-surface of $H_1^p(M)$ which is defined as
\begin{equation*}
{\mathcal{B}}_{k,q}=\left\{u\in H_1^p(M): \|u\|_{q}^q=k\right\}.
\end{equation*}
Clearly, the set ${\mathcal{B}}_{k,q}$ is non-empty for any $k>0$. Now we construct the approximated energy functional associated to subcritical problem \eqref{eq:M2}.
For each $\varepsilon>0$, we define the functional $\mathcal{I}_q^{\varepsilon}:H_1^p(M)\rightarrow \mathbb{R}$ as
\begin{equation*}
{\mathcal{I}}_q^{\varepsilon}(u)=\frac{1}{p}\int_{M}|\nabla_{g} u|_g^{p}\,dv_g+\frac{h}{p}\int_{M}|u|^p\,dv_g
-\frac{1}{q}\int_{M}f|u|^q\,dv_g+\frac{1}{q}\int_{M}\frac{a}{(u^{2}+\varepsilon)^{\frac{q}{2}}}\,dv_g.
\end{equation*}
By a standard argument, we have that $\mathcal{I}_q^{\varepsilon} \in C^1(H_1^p(M),\mathbb{R})$. Let $\delta \mathcal{I}_q^{\varepsilon}$ be the first variation of $\mathcal{I}_q^{\varepsilon}$, namely,
\begin{equation*}
\begin{split}
\delta \mathcal{I}_q^{\varepsilon}(u)(\varphi)=&\int_M |\nabla_gu|^{p-2}g(\nabla_gu,\nabla_g\varphi)\,dv_g+h\int_{M}|u|^{p-2}u\varphi\,dv_g\\
&-\int_{M}f|u|^{q-2}u\varphi\,dv_g-\int_{M}\frac{au\varphi}{(u^{2}+\varepsilon)^{\frac{q}{2}+1}}\,dv_g \,\, \text{  for all } \varphi\in H_1^p(M).
\end{split}
\end{equation*}
Therefore, weak solutions of \eqref{eq:M2} correspond to critical points of $\mathcal{I}_q^{\varepsilon}$.
Set
\begin{equation*}
\mu_{k,q}^{\varepsilon}=\inf_{u\in {\mathcal{B}}_{k,q}}\mathcal{I}_q^{\varepsilon}(u).
\end{equation*}
By H\"{o}lder's inequality and the fact that $\mathrm{Vol}_g(M)=1$, it holds
$\mathcal{I}_q^{\varepsilon}(u)\geq \frac{h}{p}k^{\frac{p}{q}}-\frac{k}{q}\sup_Mf$ for any $u\in {\mathcal{B}}_{k,q}$.
From this we know that $\mu_{k,q}^{\varepsilon}> -\infty$ provided that $k$ is finite. On the other hand, using the test function $u=k^{\frac{1}{q}}$, we obtain
\begin{equation}\label{eq3.1-3}
\mu_{k,q}^{\varepsilon}\leq\frac{h}{p}k^{\frac{p}{q}}-\frac{k}{q}\int_{M}f\,dv_g+\frac{1}{q}\int_{M}\frac{a}{(k^{\frac{2}{q}}
+\varepsilon)^{\frac{q}{2}}}\,dv_g,
\end{equation}
which implies that $\mu_{k,q}^{\varepsilon}<+\infty$.
\subsection{The asymptotic behavior of \texorpdfstring{$\mu_{k,q}^{\varepsilon}$}{mu\_{k,q} varepsilon}}\label{subsec3.2}

In this subsection, we first show that if $k$, $q$ and $\varepsilon$ are fixed, then $\mu_{k,q}^{\varepsilon}$ is achieved
by some positive function, say $\bar{u}$.
\begin{lemma} \label{lemma 3.2}
$\mu_{k,q}^{\varepsilon}$ is attained by a positive function $\bar{u}\in C^{1, \alpha}(M)$ for $q<p^*$.
\end{lemma}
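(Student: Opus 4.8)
The plan is to prove that the infimum $\mu_{k,q}^{\varepsilon}=\inf_{u\in\mathcal{B}_{k,q}}\mathcal{I}_q^{\varepsilon}(u)$ is attained by the direct method of the calculus of variations, exploiting the subcritical exponent $q<p^*$ to gain compactness. First I would take a minimizing sequence $(u_j)\subset\mathcal{B}_{k,q}$, so that $\|u_j\|_q^q=k$ and $\mathcal{I}_q^{\varepsilon}(u_j)\to\mu_{k,q}^{\varepsilon}$. The key preliminary step is to show that $(u_j)$ is bounded in $H_1^p(M)$. Since the term $\frac{1}{q}\int_M a(u_j^2+\varepsilon)^{-q/2}\,dv_g$ is nonnegative (using $a\geq 0$) and bounded above by $\frac{1}{q}\varepsilon^{-q/2}\int_M a\,dv_g$, and since $\frac{h}{p}\int_M|u_j|^p\,dv_g$ and $-\frac{1}{q}\int_M f|u_j|^q\,dv_g$ are controlled on $\mathcal{B}_{k,q}$ (the first by the fixed $L^p$-norm bound coming from the constraint via H\"older and $\mathrm{Vol}_g(M)=1$, the second by $\frac{k}{q}\sup_M f$), boundedness of $\mathcal{I}_q^{\varepsilon}(u_j)$ forces $\int_M|\nabla_g u_j|_g^p\,dv_g$ to stay bounded. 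Together with the $L^p$-control this yields $\sup_j\|u_j\|<\infty$.

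Next I would extract a weakly convergent subsequence $u_j\rightharpoonup\bar u$ in $H_1^p(M)$. Because $q<p^*$, the embedding $H_1^p(M)\hookrightarrow L^q(M)$ is \emph{compact}, so $u_j\to\bar u$ strongly in $L^q(M)$ and (along a further subsequence) pointwise a.e. The strong $L^q$ convergence immediately gives $\|\bar u\|_q^q=k$, so the constraint $\bar u\in\mathcal{B}_{k,q}$ is preserved and in particular $\bar u\not\equiv 0$. I would then pass to the limit in each term of $\mathcal{I}_q^{\varepsilon}$: the gradient term $\frac{1}{p}\int_M|\nabla_g u|_g^p$ is weakly lower semicontinuous (being convex and continuous in $\nabla_g u$); the terms $\frac{h}{p}\int_M|u|^p$ and $-\frac{1}{q}\int_M f|u|^q$ converge by strong $L^p$ and $L^q$ convergence; and for the singular term $\frac{1}{q}\int_M a(u^2+\varepsilon)^{-q/2}$ the integrand is bounded by the integrable function $\varepsilon^{-q/2}a$, so by pointwise a.e.\ convergence and dominated convergence this term passes to the limit as well. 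Combining these gives $\mathcal{I}_q^{\varepsilon}(\bar u)\leq\liminf_j\mathcal{I}_q^{\varepsilon}(u_j)=\mu_{k,q}^{\varepsilon}$, and since $\bar u\in\mathcal{B}_{k,q}$ the reverse inequality holds by definition, so $\bar u$ is a minimizer.

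It remains to show $\bar u$ can be taken positive and is of class $C^{1,\alpha}(M)$. For positivity I would note that $\mathcal{I}_q^{\varepsilon}(|\bar u|)\leq\mathcal{I}_q^{\varepsilon}(\bar u)$ since $\int_M|\nabla_g|\bar u||_g^p=\int_M|\nabla_g\bar u|_g^p$ and every other term depends only on $|\bar u|$; thus $|\bar u|$ is also a minimizer, and replacing $\bar u$ by $|\bar u|$ we may assume $\bar u\geq 0$. The constrained minimizer satisfies the Euler--Lagrange equation $\delta\mathcal{I}_q^{\varepsilon}(\bar u)=\sigma\,p\,\|\bar u\|_q^{q-p}|\bar u|^{q-2}\bar u$ in the weak sense for a Lagrange multiplier $\sigma$; after the standard rescaling this is precisely the subcritical equation \eqref{eq:M2} (with $f$ replaced by $\sigma$-adjusted data), so Lemma \ref{lemma 2.1}(i) with $\varepsilon>0$ gives $\bar u\in C^{1,\alpha}(M)$, and then the strong maximum principle for the $p$-Laplacian, combined with $h<0$ and the bound of Lemma \ref{lemma 2.2}, upgrades $\bar u\geq 0$ to $\bar u>0$ everywhere. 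The main obstacle I anticipate is the bookkeeping around the Lagrange multiplier and the rescaling needed to identify the critical-point equation with \eqref{eq:M2} in the exact normalization that makes Lemma \ref{lemma 2.1} and Lemma \ref{lemma 2.2} directly applicable; the lower-semicontinuity and compactness arguments themselves are routine once $q<p^*$ is used, since that is exactly what restores compactness lost at the critical exponent $p^*$.
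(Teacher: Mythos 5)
Your proposal is correct and takes essentially the same route as the paper: direct minimization on $\mathcal{B}_{k,q}$ using the compact embedding $H_1^p(M)\hookrightarrow L^q(M)$ for $q<p^*$, dominated convergence with the dominating function $\varepsilon^{-q/2}a\in L^1(M)$ for the singular term, weak lower semicontinuity of the gradient term, a Lagrange multiplier producing the Euler--Lagrange equation, and then Lemma \ref{lemma 2.1}(i) followed by the strong maximum principle. Two small remarks: no rescaling is actually needed, since the paper simply absorbs the multiplier by replacing $f$ with $f+\lambda$ in \eqref{eq:M2} and Lemma \ref{lemma 2.1}(i) applies verbatim (its proof only uses the growth bound $|\tilde h(x,u)|\leq C_1|u|^{p^*-1}+C_2$); and Lemma \ref{lemma 2.2} presupposes that the solution is positive, so it cannot be used to upgrade $\bar u\geq 0$ to $\bar u>0$ --- the strong maximum principle (which you also invoke), together with $\|\bar u\|_q^q=k>0$, is what does that work, exactly as in the paper.
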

\begin{proof}
Indeed, let $\{u_j\}_{j}$ be a minimizing
sequence for $\mu_{k,q}^{\varepsilon}$, that is,
$$u_j \in {\mathcal{B}}_{k,q}\quad \text{and} \quad \mathcal{I}_q^{\varepsilon}(u_j)\rightarrow \mu_{k,q}^{\varepsilon}.$$
Since $\mathcal{I}_q^{\varepsilon}(u_j)=\mathcal{I}_q^{\varepsilon}(|u_j|)$, we may assume that $u_j\geq 0$ for all
$j\geq 1$. By H\"{o}lder's inequality, one has $\|u_j\|_p\leq k^{\frac{1}{q}}$. Since
$\mathcal{I}_q^{\varepsilon}(u_j)\leq \mu_{k,q}^{\varepsilon}+1$ for sufficiently large $j$, it holds
$$\frac{1}{p}\int_{M}|\nabla_{g} u_j|_g^{p}\,dv_g\leq \mu_{k,q}^{\varepsilon}-\frac{h}{p}k^{\frac{p}{q}}+\frac{k}{q}\sup_Mf+1.$$
Hence,  the sequence $\{u_j\}_{j}\subset H_1^p(M)$ is bounded and, up to subsequences,
\begin{equation*}
  \begin{split}
  & u_j \rightharpoonup \bar{u}   \,\, \text{ weakly in }    H_1^p(M), \,\,  u_j \to \bar{u}  \,\, \text{ strongly in } L^q(M),\,\,\text{and} \\
  & u_j(x) \to \bar{u}(x)  \,\, \text{ a.e. in } M \,\, \text{ as }j \to +\infty.
  \end{split}
 \end{equation*}
This shows that $\bar{u}(x)\geq 0$ a.e. on $M$ and $\|\bar{u}\|_q=k^{\frac{1}{q}}$.
In particular, we have $\bar{u}\in {\mathcal{B}}_{k,q}$.
Now noticing that $a\varepsilon^{-\frac{q}{2}}\in L^1(M)$, we obtain by Lebesgue's dominated convergence theorem that
$$\int_{M}\frac{a}{(u_j^{2}+\varepsilon)^{\frac{q}{2}}}\,dv_g\rightarrow \int_{M}\frac{a}{({\bar{u}}^{2}+\varepsilon)^{\frac{q}{2}}}\,dv_g \,\, \text{ as } j \to +\infty.$$
 Hence, from the weak lower semi-continuity of the integral functionals, we get
$$\mu_{k,q}^{\varepsilon}=\lim\limits_{j \to +\infty}\mathcal{I}_q^{\varepsilon}(u_j)\geq \mathcal{I}_q^{\varepsilon}(\bar{u}).$$
This and the fact that $\bar{u}\in {\mathcal{B}}_{k,q}$ immediately give us
$\mu_{k,q}^{\varepsilon}= \mathcal{I}_q^{\varepsilon}(\bar{u})$.

Next, we will show the regularity and positivity of $\bar{u}$.
Invoked by the Lagrange multiplier rule, we can find $\lambda \in \mathbb{R}$, such that $\bar{u}$ solves
\begin{equation}\label{eq:3.2-1}
\Delta_{p,g}\bar{u}+h|\bar{u}|^{p-2}\bar{u}=(f(x)+\lambda)|\bar{u}|^{q-2}\bar{u}+\frac{a(x)\bar{u}}{(\bar{u}^{2}+\varepsilon)^{\frac{q}{2}+1}}
\end{equation}
in the weak sense. It follows from Lemma \ref{lemma 2.1} that $\bar{u}\in C^{1, \alpha}(M)$ for some
$\alpha\in (0,1)$ and $\bar{u}\geq 0$ in $M$. Furthermore, applying the strong maximum principle (see \cite[Theorem 2.6]{Druet}) and noticing that
$\int_M(\bar{u})^q\,dv_g=k\neq 0$, we conclude that $\bar{u}>0$. Thus, $\bar{u}$ is a positive solution of
\eqref{eq:3.2-1}.
\end{proof}

The following interesting property of $\mu_{k,q}^{\varepsilon}$ will also be used in the proofs of our main results.
\begin{proposition}\label{prop3.1}
For $\varepsilon>0$ fixed, $\mu_{k,q}^{\varepsilon}$ is continuous with respect to $k$.
\end{proposition}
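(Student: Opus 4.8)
The plan is to exploit the natural scaling bijection between the constraint sets. For $k,k'>0$ and $u\in\mathcal{B}_{k,q}$, the rescaled function $tu$ with $t=(k'/k)^{1/q}$ satisfies $\|tu\|_q^q=k'$, so $u\mapsto tu$ maps $\mathcal{B}_{k,q}$ bijectively onto $\mathcal{B}_{k',q}$. Writing out $\mathcal{I}_q^{\varepsilon}(tu)$ term by term, continuity of $k\mapsto\mu_{k,q}^{\varepsilon}$ at a fixed $k_0>0$ will follow once I establish a uniform modulus-of-continuity estimate of the form
\begin{equation*}
|\mathcal{I}_q^{\varepsilon}(tu)-\mathcal{I}_q^{\varepsilon}(u)|\leq C(R,\varepsilon,q)\,|t-1|,
\end{equation*}
valid for all $u$ with $\|u\|\leq R$ and all $t\in[1/2,2]$, together with the existence of minimizers furnished by Lemma \ref{lemma 3.2}.

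Granting this, I would argue as follows. Fix $k_0>0$ and a sequence $k_n\to k_0$; without loss of generality $k_n\in[k_0/2,2k_0]$. By Lemma \ref{lemma 3.2} each $\mu_{k_n,q}^{\varepsilon}$ is attained by some $\bar u_n\in\mathcal{B}_{k_n,q}$, and the a priori gradient bound used in that proof (controlling $\|\nabla_g\bar u_n\|_p$ through $\mu_{k_n,q}^{\varepsilon}$, which is itself bounded above via \eqref{eq3.1-3}, and through $k_n$) shows that $\|\bar u_n\|\leq R$ for some $R$ independent of $n$. Setting $s_n=(k_0/k_n)^{1/q}\to1$, the functions $s_n\bar u_n$ lie in $\mathcal{B}_{k_0,q}$, so the modulus estimate gives $\mu_{k_0,q}^{\varepsilon}\leq\mathcal{I}_q^{\varepsilon}(s_n\bar u_n)\leq\mu_{k_n,q}^{\varepsilon}+C|s_n-1|$. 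Symmetrically, scaling a fixed minimizer $\bar u$ of $\mu_{k_0,q}^{\varepsilon}$ by $t_n=(k_n/k_0)^{1/q}$ yields $\mu_{k_n,q}^{\varepsilon}\leq\mu_{k_0,q}^{\varepsilon}+C|t_n-1|$. Letting $n\to\infty$ sandwiches $\mu_{k_n,q}^{\varepsilon}$ between quantities converging to $\mu_{k_0,q}^{\varepsilon}$, which proves the claim.

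The main obstacle is the uniform modulus estimate, and within it the singular term $\frac1q\int_M a\,(u^2+\varepsilon)^{-q/2}\,dv_g$. The gradient, $L^p$ and $f$-contributions are harmless: they scale by $t^p$ or $t^q$, and since $|t^p-1|+|t^q-1|\lesssim|t-1|$ on $[1/2,2]$ while $\|\nabla_g u\|_p^p$, $\|u\|_p^p$ and $\int_M f|u|^q\,dv_g$ are all controlled by $R$, these differences are $O(|t-1|)$. The delicate point is that the minimizers $\bar u_n$ change with $n$, so I cannot invoke dominated convergence on a fixed integrand; I instead need a pointwise Lipschitz bound in $t$ that is uniform in the function value. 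This is precisely where $\varepsilon>0$ enters: since $|\tfrac{d}{ds}(s^2x^2+\varepsilon)^{-q/2}|=q\,s\,x^2(s^2x^2+\varepsilon)^{-q/2-1}$, and the quantity $s\,x^2(s^2x^2+\varepsilon)^{-q/2-1}$ is bounded by a constant $C_\varepsilon$ for all $x\geq0$ and $s\in[1/2,2]$ (it decays like $x^{-q}$ as $x\to\infty$, vanishes as $x\to0$, and the lower bound $s^2x^2+\varepsilon\geq\varepsilon$ keeps it finite), I obtain $|(t^2x^2+\varepsilon)^{-q/2}-(x^2+\varepsilon)^{-q/2}|\leq C_\varepsilon|t-1|$ pointwise. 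Integrating against the bounded nonnegative $a\in L^1(M)$ then controls the singular term by $C_\varepsilon|t-1|\int_M a\,dv_g$. I expect no essential difficulty beyond this estimate; note that the argument genuinely requires $\varepsilon>0$, in agreement with the hypothesis of the statement.
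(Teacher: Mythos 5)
Your proof is correct, and it takes a genuinely different route from the paper for the harder half of the argument. The paper also uses the scaling trick $t_j u\in\mathcal{B}_{k_j,q}$, $t_j\to1$, but only for the upper bound $\limsup_j\mu_{k_j,q}^{\varepsilon}\leq\mu_{k,q}^{\varepsilon}$ (handling the singular term there by dominated convergence with the dominating function $a\,\varepsilon^{-q/2}$); for the lower bound it runs a soft compactness argument: it extracts from the minimizers $u_j$ a weak $H_1^p$-limit $\bar u$, uses strong $L^q$ convergence to place $\bar u\in\mathcal{B}_{k,q}$, and invokes weak lower semi-continuity of $\mathcal{I}_q^{\varepsilon}$ to get $\mu_{k,q}^{\varepsilon}\leq\liminf_j\mu_{k_j,q}^{\varepsilon}$. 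You replace that entire extraction step by scaling the minimizers $\bar u_n$ themselves back into $\mathcal{B}_{k_0,q}$ and proving a quantitative, uniform Lipschitz modulus $|\mathcal{I}_q^{\varepsilon}(tu)-\mathcal{I}_q^{\varepsilon}(u)|\leq C(R,\varepsilon,q)|t-1|$ over $\{\|u\|\leq R\}$, whose only delicate ingredient is the pointwise bound $\sup_{x\geq0,\,s\in[1/2,2]}q\,s\,x^2(s^2x^2+\varepsilon)^{-q/2-1}\leq C_\varepsilon$ — correct, since the expression vanishes at $x=0$, decays like $x^{-q}$ at infinity, and is kept finite by $s^2x^2+\varepsilon\geq\varepsilon$. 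The two ingredients you borrow are sound: Lemma \ref{lemma 3.2} supplies the minimizers, and the uniform bound $\|\bar u_n\|\leq R$ follows because \eqref{eq3.1-3} bounds $\mu_{k_n,q}^{\varepsilon}$ above uniformly for $k_n\in[k_0/2,2k_0]$ (with $\varepsilon>0$ fixed), and then the energy identity controls $\|\nabla_g\bar u_n\|_p$ exactly as in the paper's \eqref{e3.7}. What your approach buys is a stronger conclusion and lighter machinery: since $|(k_n/k_0)^{1/q}-1|\lesssim|k_n-k_0|$ locally, you actually prove that $k\mapsto\mu_{k,q}^{\varepsilon}$ is locally Lipschitz, not merely continuous, with no subsequence extraction or weak lower semi-continuity needed; the cost is that the argument is rigidly tied to $\varepsilon>0$ (as you note), whereas the paper's soft argument is structurally closer to the compactness arguments reused elsewhere (e.g.\ in Lemma \ref{lemma 3.2} and Proposition \ref{prop3.3}) and produces the limit minimizer $\bar u\in\mathcal{B}_{k,q}$ as a by-product.
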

\begin{proof}
First we know that $\mu_{k,q}^{\varepsilon}$ is well-defined for any $k\in (0,+\infty)$. We have to verify that for each $k$ fixed and for
any sequence $k_j\rightarrow k$ there holds $\mu_{k_j,q}^{\varepsilon}\rightarrow \mu_{k,q}^{\varepsilon}$ as $j\rightarrow +\infty$.
This is equivalent to show that there exists a subsequence of $\{k_j\}_j$, still denoted by $\{k_j\}_j$, such that
$\mu_{k_j,q}^{\varepsilon}\rightarrow \mu_{k,q}^{\varepsilon}$ as $j\rightarrow +\infty$.
We suppose that $\mu_{k_j,q}^{\varepsilon}$ and $ \mu_{k,q}^{\varepsilon}$ are achieved by $u_j \in {\mathcal{B}}_{k_j,q}$ and $u \in {\mathcal{B}}_{k,q}$
respectively. From Lemma \ref{lemma 3.2}, $u_j$ and $u$ are positive functions on $M$. We need to prove the boundedness of $u_j$ in $H_1^p(M)$.
It then suffices to control $\|\nabla_gu\|_p$. In fact, as in \eqref{eq3.1-3}, one has
\begin{equation}\label{e3.7}
\int_{M}|\nabla_{g} u_j|_g^{p}\,dv_g<p\Big(\mu_{k_j,q}^{\varepsilon}-\frac{h}{p}k_j^{\frac{p}{q}}+\frac{k_j}{q}\sup_M f+1\Big).
\end{equation}
By the homogeneity we can find a sequence of positive numbers $\{t_j\}_j$ such that
$t_ju\in {\mathcal{B}}_{k_j,q}$. Since $k_j\rightarrow k$ as $j\rightarrow +\infty$ and $k_j^{{1}/{q}}=\|t_ju\|_q=t_jk^{{1}/{q}}$,
we immediately see that $t_j\rightarrow 1$ as $j\rightarrow +\infty$. Now, substituting $u$ by $t_ju$ in ${\mathcal{I}}_q^{\varepsilon}(u)$, it holds
\begin{equation}\label{e3.8}
\begin{split}
\mu_{k_j,q}^{\varepsilon} \leq\,\, &  t_j^p\Big(\frac{1}{p}\int_M |\nabla_gu|_g^{p}\,dv_g+\frac{h}{p}\int_{M}|u|^{p}\,dv_g\Big)\\
& -\frac{t_j^q}{q}\int_{M}f|u|^{q}\,dv_g+\frac{1}{q}\int_{M}\frac{a}{(u^{2}+\varepsilon)^{\frac{q}{2}}}\,dv_g.
\end{split}
\end{equation}
Notice that $u$ is fixed and $t_j$ belongs to a neighborhood of $1$ for large $j$. Thus, $\{\mu_{k_j,q}^{\varepsilon}\}_j$ is bounded which also implies by
\eqref{e3.7} that $\{\|\nabla_gu_j\|_p\}_j$ is bounded. Hence, $\{u_j\}_j$ is bounded in $H_1^p(M)$. Consequently, there exists a $\bar{u}\in H_1^p(M)$ such that,
up to subsequences, $u_j\rightarrow \bar{u}$ strongly in $L^r(M)$ for any $r\in [1, p^*)$,
and $\lim\limits_{j\rightarrow+\infty}\|u_j\|_q=\|\bar{u}\|_q=k^{\frac{1}{q}}$,
that is, $\bar{u}\in {\mathcal{B}}_{k,q}$.
Thus, we have $\mathcal{I}_q^{\varepsilon}(u)=\mu_{k,q}^{\varepsilon}\leq \mathcal{I}_q^{\varepsilon}(\bar{u})$.
By weak lower semi-continuity property of $\mathcal{I}_q^{\varepsilon}$, we deduce that
$$\mathcal{I}_q^{\varepsilon}(u)\leq \mathcal{I}_q^{\varepsilon}(\bar{u})\leq \liminf_{j\rightarrow +\infty}\mathcal{I}_q^{\varepsilon}(u_j).$$
On the other hand, by \eqref{e3.8} and Lebesgue's dominated convergence theorem, it holds
$\limsup\limits_{j\rightarrow +\infty}\mu_{k_j,q}^{\varepsilon}\leq \mathcal{I}_q^{\varepsilon}(u)$
since $t_j\rightarrow 1$ as $j\rightarrow +\infty$.
Therefore, $\lim\limits_{j\rightarrow +\infty}\mu_{k_j,q}^{\varepsilon}=\mu_{k,q}^{\varepsilon}$. This completes the proof.
\end{proof}

The following Lemma describe the asymptotic behavior of $\mu_{k,q}^{\varepsilon}$ as $k$ varies (See Figure \ref{Figure1}).
We omit its proof since the proof is similar as the case $p=2$ in \cite{Ngo1}.
\begin{figure}[H]
  \centering
  \includegraphics[width=7cm]{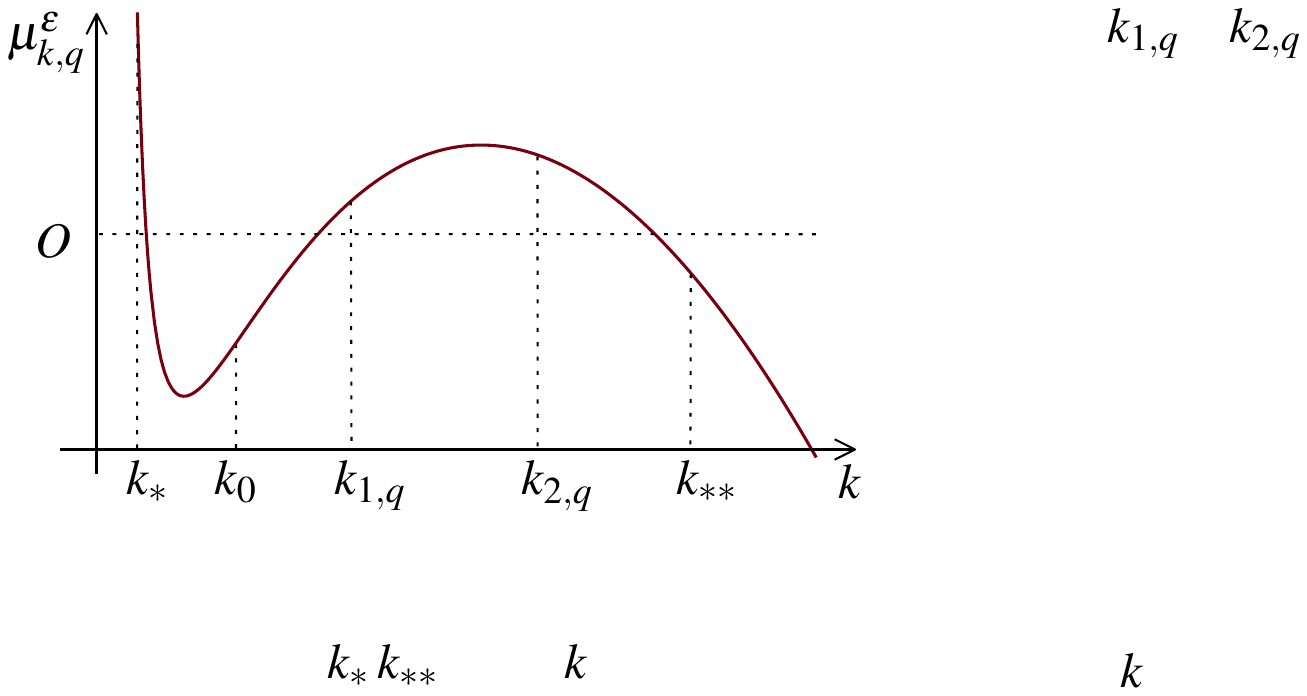}\\
  \caption{The asymptotic behavior of $\mu_{k,q}^{\varepsilon}$ when $\sup\limits_Mf>0$.}\label{Figure1}
\end{figure}
\begin{lemma}\label{asymptotic1}
 The following asymptotic behaviors hold:
 \begin{itemize}
  \item [\textnormal{(i)}] $\mu_{k,q}^{k^{\frac{2}{q}}}\rightarrow +\infty$ as $k\rightarrow 0^+$. In particular, there exists $k_*$ sufficiently small
and independent of both $q$ and $\varepsilon$ such that $\mu_{k_*,q}^{\varepsilon}>0$ for any $\varepsilon\leq k_*$.
  \item [\normalfont\text{(ii)}] $\mu_{k,q}^{\varepsilon}\rightarrow -\infty$ as $k\rightarrow +\infty$ provided $\sup\limits_Mf>0$.
  \item [\normalfont\text{(iii)}] There exists
$k_0=\left(\frac{(p+q)}{2p}\frac{|h|}{\int_M|f^-|\,dv_g}\right)^{{q}/{(q-p)}}$
such that $\mu_{k_0,q}^{\varepsilon}\leq0$ for any
$\varepsilon>0$ provided
\begin{equation} \label{e3.5}
\int_{M}a\,dv_g\leq \Big(\frac{p+q}{2p}\frac{|h|}{\int_{M}|f^-|dv_g}\Big)^{\frac{q+p}{q-p}}\frac{|h|}{2p}(q-p).
\end{equation}
In particular, $k_0>k_*$.
 \item [\textnormal{(iv)}]  Assume that \eqref{e1.3} holds. Then there exists some constant $\mu$ independent of $q$ and $\varepsilon$ such that
$\mu_{k,q}^{\varepsilon}\leq\mu$ for any $\varepsilon>0$, $q\in(p^{\flat},p^*)$ and $k\geq k_0$.
  \item [\normalfont\text{(v)}] There is some $k_{**}$ sufficiently large and independent of both $q$ and $\varepsilon$ such that $\mu_{k,q}^{\varepsilon}<0$ for any $k\geq k_{**}$.
 \end{itemize}
 \end{lemma}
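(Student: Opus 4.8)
The plan is to trap $\mu_{k,q}^\varepsilon$ between elementary lower and upper bounds and then read off each limit. Two structural remarks organize the whole argument. Since $a\ge 0$, the integrand $a/(u^2+\varepsilon)^{q/2}$ is nonincreasing in $\varepsilon$, so $\varepsilon\mapsto\mathcal I_q^\varepsilon(u)$ and hence $\varepsilon\mapsto\mu_{k,q}^\varepsilon$ are nonincreasing; this reduces every claim that must hold \emph{uniformly in $\varepsilon$} to its worst case, namely the largest admissible $\varepsilon$. Moreover $q$ ranges over the compact interval $[p^\flat,p^*]$, on which all the exponents ($p/q$, $q/(q-p)$) and all the constants below vary continuously; hence any estimate proved uniformly on $[p^\flat,p^*]$ automatically gives the asserted independence of $q$.

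For the lower bound (i) the decisive tool is a Cauchy--Schwarz estimate on the singular term: for $u\in\mathcal B_{k,q}$,
\begin{equation*}
\Big(\int_M a\,dv_g\Big)^2\le \Big(\int_M a\,(u^2+\varepsilon)^{q/2}\,dv_g\Big)\Big(\int_M \frac{a}{(u^2+\varepsilon)^{q/2}}\,dv_g\Big).
\end{equation*}
Using $(u^2+\varepsilon)^{q/2}\le C(|u|^q+\varepsilon^{q/2})$ with $\|u\|_q^q=k$ and the coupling $\varepsilon=k^{2/q}$ bounds the first factor by $C'k$, so that $\mathcal I_q^\varepsilon(u)\ge \frac hp k^{p/q}-\frac kq\sup_Mf+\frac{(\int_M a)^2}{qC'k}$; as $\int_M a\,dv_g>0$, the last term drives $\mu_{k,q}^{k^{2/q}}\to+\infty$ as $k\to0^+$. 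Repeating the estimate at $\varepsilon=k=k_*$ (where the first factor is again bounded by a positive power of $k_*$) and invoking the $\varepsilon$-monotonicity produces a single small $k_*$, independent of $q$ and $\varepsilon$, with $\mu_{k_*,q}^\varepsilon>0$ for all $\varepsilon\le k_*$.

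The upper bounds come from explicit test functions. For (iii) I insert the constant $u=k^{1/q}$, bound $-\int_M f\le\int_M|f^-|$ and $(k^{2/q}+\varepsilon)^{q/2}\ge k$, and get $\mu_{k,q}^\varepsilon\le -\frac{|h|}{p}k^{p/q}+\frac kq\int_M|f^-|+\frac1{qk}\int_M a$. At the prescribed $k_0$ the first two terms collapse to $-\frac{(q-p)|h|}{2pq}k_0^{p/q}$, and a one-line computation shows the whole bound is $\le 0$ \emph{exactly} when \eqref{e3.5} holds, giving $\mu_{k_0,q}^\varepsilon\le0$; since $\inf_{q\in[p^\flat,p^*]}k_0>0$, shrinking $k_*$ if necessary ensures $k_*<k_0$. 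For (ii), with $\varepsilon$ fixed, I use $u=t\phi$ where $\phi\ge0$ is a fixed bump supported in $\{f\ge c>0\}$: then $-\frac{t^q}{q}\int_M f\phi^q\sim -t^q$ dominates the gradient term $\sim t^p$ (because $p<q$) and the bounded singular term, so $\mu_{k,q}^\varepsilon\to-\infty$ as $k=\|t\phi\|_q^q\to+\infty$.

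The delicate items are (iv) and (v), where the upper bound must be uniform in $\varepsilon$. The localized $\phi$ of (ii) is useless here, since $\int_M a(u^2+\varepsilon)^{-q/2}$ blows up as $\varepsilon\to0$ wherever $\phi=0$. I therefore pick a test function $\omega$ that is at once bounded below, $\omega\ge\delta>0$ (so $a/(t^2\omega^2+\varepsilon)^{q/2}\le a/(\delta^q t^q)$ uniformly in $\varepsilon$), and sufficiently concentrated on $\{f>0\}$ that $\int_M f\omega^q\ge c_0>0$ for every $q\in[p^\flat,p^*]$; writing $t^q=k/\|\omega\|_q^q$ this gives $\mu_{k,q}^\varepsilon\le At^p-Bt^q+Ct^{-q}$ with $A,B,C>0$ uniform in $q,\varepsilon$. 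On $\{t\ge t_0\}$, equivalently $\{k\ge k_0\}$, the right-hand side is bounded above by $\max_{t>0}(At^p-Bt^q)+Ct_0^{-q}=:\mu$, which proves (iv); it is moreover strictly negative once $t$, hence $k$, exceeds a threshold uniform on $[p^\flat,p^*]$, which gives the uniform $k_{**}$ of (v). Here \eqref{e1.3}, being precisely the $q\nearrow p^*$ limit of \eqref{e3.5}, is what keeps $k_0$---and with it the anchor level $\mu$---in the admissible range demanded by the later subcritical compactness analysis. I expect this reconciliation to be the main obstacle: controlling the negative-power term uniformly in $\varepsilon$ forces test functions bounded away from $0$, in direct tension with the concentration on $\{f>0\}$ needed to make $\int_M f|u|^q>0$, and carrying this out uniformly in $q$ is exactly where the degeneracy of $\Delta_{p,g}$ and the sign change of $f$ make the quasilinear case harder than $p=2$.
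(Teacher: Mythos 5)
Your proposal is correct and follows essentially the same route as the proof the paper points to (the paper omits the argument, deferring to the $p=2$ case in \cite{Ngo1}): monotonicity of $\mu_{k,q}^{\varepsilon}$ in $\varepsilon$, the Cauchy--Schwarz bound $\left(\int_M a\,dv_g\right)^2\le\left(\int_M a\,(u^2+\varepsilon)^{\frac q2}\,dv_g\right)\left(\int_M \frac{a}{(u^2+\varepsilon)^{\frac q2}}\,dv_g\right)$ with the coupling $\varepsilon=k^{2/q}$ for (i), the constant test function $k^{1/q}$ whose energy bound collapses to exactly \eqref{e3.5} at $k_0$ for (iii), and scaled test functions bounded below by $\delta>0$ (constructible as $\delta+R\psi$ with $\psi$ a bump in $\{f\ge c>0\}$, which you assert but should state) for (ii), (iv), (v), with the uniformity in $q$ over $[p^{\flat},p^*]$ coming from the lower bound on $k_0$ recorded in Remark \ref{rm3.4}. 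The only inaccuracy is your closing gloss on \eqref{e1.3}: your argument for (iv) never actually uses it---the explicit formula for $k_0$ and its uniform positive lower bound suffice, so you prove a slightly stronger statement---and its true role in the paper is, via Remark \ref{rm3.3}, to guarantee that \eqref{e3.5} holds for all $q$ close to $p^*$ in the later existence argument, so the ``tension'' you anticipate between positivity of the test function and concentration on $\{f>0\}$ is not an obstacle here.
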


The following remarks will be needed in future argument.
\begin{remark}\label{rm3.3}
A simple calculation shows that the following function
\begin{equation*}
\phi(q):=\Big(\frac{p+q}{2p}\frac{|h|}{\int_{M}|f^-|dv_g}\Big)^{\frac{q+p}{q-p}}\frac{|h|}{2p}(q-p)
\end{equation*}
is monotone increasing in $(p,p^*)$ provided that $\frac{p^*}{p}|h|\leq \int_{M}|f^-|dv_g$.
Thus, the term on the right hand side of \eqref{e1.3} equals $\lim\limits_{q\to p^*}\phi(q)$. This suggests us that a good condition
for $\int_{M}adv_g$ could be \eqref{e1.3}.
\end{remark}
\begin{remark}\label{rm3.4}
It follows from $q\in (p^{\flat},p^*)$ that
\begin{equation*}
\min\left\{\left(\frac{|h|}{\int_{M}|f^-|dv_g}\right)^{\frac{2n-p}{p}}, 1\right\}\leq k_0,
\end{equation*}
where $k_0$ is given in Lemma \ref{asymptotic1}(iii), since $\frac{p+q}{2p}>1$ and the function $\frac{q}{q-p}$ is monotone decreasing.
Thus, as in \cite{Ngo1}, one can easily control the growth of $\mu_{k_0,q}^{\varepsilon}$ as follows
\begin{equation}\label{e3.6.2}
\mu_{k_0,q}^{\varepsilon}\leq -\frac{1}{p^*}\min\left\{\left(\frac{|h|}{\int_{M}|f^-|dv_g}\right)^{\frac{2n-p}{p}}, 1\right\}\int_Mf^+\,dv_g
\end{equation}
for any $\varepsilon \geq 0$. The importance of \eqref{e3.6.2} is that the right hand side is strictly negative and independent on both $q$ and $\varepsilon$
provided that $\sup_Mf>0$, which is always the assumption in this section.
\end{remark}

The next subsection is originally due to Rauzy \cite[Subsection IV.3]{Rauzy} for prescribing the scalar curvature on a compact Riemannian manifold
of negative conformal invariant.
For the sake of clarity, we follow the argument in \cite{Rauzy} to re-prove \cite[Subsection IV.3]{Rauzy} for our quasilinear setting.
\subsection{The study of \texorpdfstring{$\lambda_{f, \eta, q}$}{lambda\_f, eta, q}}\label{lfeq}

The proof of our main result depends on $\lambda_{f, \eta, q}$ (cf. \eqref{eq2.4}). This quantity was first introduced by Rauzy \cite{Rauzy}.
Let us first define $\lambda_{f,\eta,q}'=\inf_{u\in \mathcal{A}_{\eta,q}'}{\|\nabla_g u\|_p^p}/{\|u\|_p^p}$, where
$$\mathcal{A}_{\eta,q}'=\{u\in H_1^p(M):\|u\|_q=1,\,\, \text{and }\int_{M}|f^-||u|^{q}\,dv_g \leq \eta\int_{M}|f^-|\,dv_g\}. $$
For $q$ and $\eta>0$ fixed, the set $\mathcal{A}_{\eta,q}'$ is not empty since it includes the set of functions $u\in H_1^p(M)$ such that $\|u\|_q=1$
and $\textup{supp} (u)\subset\{x\in M: f(x)>0\}$, and thus $\lambda_{f,\eta,q}'$ is finite.
Moreover, it is easy to check that $\lambda_{f,\eta,q}'$ is monotone decreasing with respect to $\eta$.
We are now going to prove $\lambda_{f,\eta,q}'=\lambda_{f,\eta,q}$. To this end, it suffices to prove $\lambda_{f,\eta,q}'\geq\lambda_{f,\eta,q}$
since the reverse is automatically true.

\begin{lemma}\label{lemma3.3}
As a function of $\eta>0$, $\lambda_{f,\eta,q}$ is monotone decreasing.
\end{lemma}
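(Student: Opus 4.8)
The plan is to prove the equivalent statement that for any $0<\eta_1<\eta_2$ one has $\lambda_{f,\eta_2,q}\le\lambda_{f,\eta_1,q}$. First I would record two normalizations. Since replacing $u$ by $|u|$ leaves both constraints in \eqref{eq2.5} unchanged while not increasing $\int_M|\nabla_g u|_g^p\,dv_g$, I may restrict the infimum in \eqref{eq2.4} to nonnegative $u$. Moreover, both the Rayleigh quotient $R(u):=\int_M|\nabla_g u|_g^p\,dv_g\big/\int_M|u|^p\,dv_g$ and the concentration ratio $G(u):=\int_M|f^-||u|^q\,dv_g\big/\big(\int_M|f^-|\,dv_g\,\|u\|_q^q\big)$ are invariant under $u\mapsto cu$, so the normalization $\|u\|_q=1$ in $\mathcal{A}_{\eta,q}$ is harmless and membership $u\in\mathcal{A}_{\eta,q}$ reduces to the single scale-invariant condition $G(u)=\eta$. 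Because $q<p^*$, the embedding $H_1^p(M)\hookrightarrow L^q(M)$ is compact, so $\lambda_{f,\eta_1,q}$ is attained by some nonnegative $u\in\mathcal{A}_{\eta_1,q}$; the argument is exactly that of Lemma \ref{lemma 3.2}, the constraint $G(u)=\eta_1$ being preserved in the limit through the strong $L^q$-convergence.

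Given such a minimizer $u$, the strategy is to deform it, keeping $\|u\|_q=1$, so as to raise $G$ from $\eta_1$ up to $\eta_2$ \emph{without increasing} $R$. The natural deformation transfers $L^q$-mass onto $\mathrm{supp}\,f^-$. Writing $\Omega^-=\{f<0\}$ and $\Omega^+=M\setminus\Omega^-$ and letting $\chi_{\Omega^\pm}$ be the corresponding indicators, I would consider the family $v_s=u\chi_{\Omega^-}+s\,u\chi_{\Omega^+}$ for $s\in(0,1]$ (regularized across $\partial\Omega^-$ and then passing to the limit). Since $f^-$ vanishes on $\Omega^+$, the quantity $\int_M|f^-||v_s|^q\,dv_g=\int_{\Omega^-}|f^-|u^q\,dv_g$ is independent of $s$, whereas $\|v_s\|_q^q=\int_{\Omega^-}u^q\,dv_g+s^q\int_{\Omega^+}u^q\,dv_g$ decreases as $s\downarrow0$; hence $G(v_s)$ increases continuously from $G(v_1)=\eta_1$ up to $G(u\chi_{\Omega^-})$. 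If this range already covers $\eta_2$, the intermediate value theorem yields $s_*$ with $v_{s_*}\in\mathcal{A}_{\eta_2,q}$ after normalization, whence $\lambda_{f,\eta_2,q}\le R(v_{s_*})$; if not, I would first concentrate the mass of $u$ inside $\Omega^-$ toward the points where $|f^-|$ is largest, which enlarges the reachable range of $G$ up to (just below) the a priori upper bound $\sup_M|f^-|\big/\int_M|f^-|\,dv_g$.

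The delicate point, and the step I expect to be the main obstacle, is to guarantee that $R$ does not increase along this deformation, since moving mass toward $\{f<0\}$ can enlarge $\int_M|\nabla_g v_s|_g^p\,dv_g$. For the scaling family one computes $R(v_s)=(A+s^pB)/(C+s^pD)$ with $A=\int_{\Omega^-}|\nabla_g u|_g^p$, $B=\int_{\Omega^+}|\nabla_g u|_g^p$, $C=\int_{\Omega^-}u^p$ and $D=\int_{\Omega^+}u^p$, so $R(v_s)$ is monotone in $s^p$ and stays between the restricted quotients $A/C=R(u\chi_{\Omega^-})$ and $B/D=R(u\chi_{\Omega^+})$; the deformation lowers $R$ exactly when $A/C\le B/D$, i.e. when the Rayleigh quotient of $u$ over $\mathrm{supp}\,f^-$ does not exceed that over its complement. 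Establishing this comparison for the minimizer $u$, or else choosing the concentrating set inside $\Omega^-$ adapted to where $u$ has small quotient, is where the real work lies, and it is precisely here that one exploits that $u$ minimizes $R$ under the constraint $G=\eta_1$. Once this energy control is secured, exhausting the infimum by near-minimizers gives $\lambda_{f,\eta_2,q}\le\lambda_{f,\eta_1,q}$. As a cleaner bookkeeping alternative, the very same mass-redistribution shows directly that any $u\in\mathcal{A}_{\eta,q}'$ with $G(u)<\eta$ can be pushed to the boundary $G=\eta$ without raising $R$; this simultaneously establishes the equality $\lambda_{f,\eta,q}'=\lambda_{f,\eta,q}$ and, through the evident monotonicity of $\lambda_{f,\eta,q}'$, the monotonicity asserted here.
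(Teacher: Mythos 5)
Your overall architecture coincides with the paper's: introduce the relaxed quantity $\lambda_{f,\eta,q}'$ over the inequality-constrained set $\mathcal{A}_{\eta,q}'$, which is trivially nonincreasing in $\eta$ because these sets are nested; obtain a nonnegative minimizer by the direct method (using compactness of $H_1^p(M)\hookrightarrow L^q(M)$ for $q<p^*$, as in Lemma \ref{lemma 3.2}); and then push a minimizer with strict inequality in the constraint to the boundary without raising the Rayleigh quotient, which yields $\lambda_{f,\eta,q}'=\lambda_{f,\eta,q}$ and hence the monotonicity. The problem is the step you yourself flag as ``where the real work lies'': the energy control for your deformation is never established, and as proposed it fails. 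The truncation $v_s=u\chi_{\Omega^-}+s\,u\chi_{\Omega^+}$ does not belong to $H_1^p(M)$ for $s<1$, since it jumps by $(1-s)u$ across $\partial\Omega^-$; smoothing across a layer of width $\epsilon$ adds a gradient contribution of order $\epsilon^{-p}\int_{\text{layer}}u^p\,dv_g$, which is uncontrolled. Even granting the deformation, the required comparison $A/C\le B/D$ for the minimizer is neither automatic nor extracted from minimality (nothing in the constrained minimization forces the restricted quotient over $\Omega^-$ to be the smaller one), and your fallback of concentrating mass where $|f^-|$ is largest typically inflates $\int_M|\nabla_g u|_g^p\,dv_g$ without bound. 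So the proposal, by its own admission, proves the lemma only conditionally.

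The paper's mechanism removes the obstacle entirely: instead of redistributing mass multiplicatively, \emph{add a constant}. If the nonnegative minimizer $v$ of $\lambda_{f,\eta,q}'$ satisfies $\int_M|f^-|v^q\,dv_g<\eta\int_M|f^-|\,dv_g$, choose $\tau>0$ with $\int_M|f^-|(v+\tau)^q\,dv_g=\eta\int_M|f^-|\,dv_g$, possible by continuity and monotonicity in $\tau$. Then $\nabla_g(v+\tau)=\nabla_g v$ while $\|v+\tau\|_p^p>\|v\|_p^p$, so the Rayleigh quotient \emph{strictly} decreases with no boundary-layer cost, and $\|v+\tau\|_q\ge 1$ guarantees that $(v+\tau)/\|v+\tau\|_q$ lies in the relaxed set $\mathcal{A}_{\eta,q}'$ --- it need not lie in $\mathcal{A}_{\eta,q}$, which is exactly why the contradiction is run in the relaxed problem. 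Minimality is contradicted, so the minimizer meets the equality constraint, giving $\lambda_{f,\eta,q}'=\lambda_{f,\eta,q}$ and the lemma. Everything else in your write-up (reduction to $u\ge0$, scale invariance, attainment of the infimum) is sound; replacing your cutoff family by this constant shift closes the gap.
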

\begin{proof}
We first prove that $\lambda_{f,\eta,q}'$ can be achieved. Let $\{v_j\}_j\subset \mathcal{A}_{\eta,q}'$  be a minimizing sequence
for $\lambda_{f,\eta,q}'$. Obviously the sequence $\{|v_j|\}_j$ is still a minimizing sequence in $\mathcal{A}_{\eta,q}'$,
and then we can assume that $v_j\geq 0$ in $M$. By the same arguments as in Lemma \ref{lemma 3.2}, we obtain that $\{v_j\}_j$
is bounded in $H_1^p(M)$. Then up to subsequences, there exists $v\in  H_1^p(M)$ such that
\begin{align*}
\begin{split}
&v_j \rightharpoonup v \,\, \text{ weakly in } H_1^p(M),\,\, v_j \rightarrow v \,\, \text{ strongly in } L^{r}(M) \,\, \text{for all } r\in [1,p^*),\,\, \text{and }\\
&v_j(x) \rightarrow v(x) \,\, \text{ for almost every }x\in M \,\,\text{ as } j \to +\infty.
\end{split}
\end{align*}
Consequently, $v(x)\geq 0$ a.e. on $M$ and $\|v\|_q=1$.
Moreover, Lebesgue's dominated convergence theorem allows us to conclude that $\int_{M}|f^-||v|^{q}\,dv_g \leq \eta\int_{M}|f^-|\,dv_g$.
Hence, $v\in \mathcal{A}_{\eta,q}'$. We notice that
$$ \|v\|_p^p= \lim_{j\rightarrow +\infty}\|v_j\|_p^p   \quad \text{and} \quad \|\nabla_gv\|_p^p\leq \lim_{j\rightarrow +\infty}\|\nabla_gv_j\|_p^p.$$
It follows that $\|\nabla_gv\|_p^p/\|v\|_p^p\leq \lambda_{f,\eta,q}'$. Thus, $\lambda_{f,\eta,q}'$ is achieved by $v$.
Moreover, $\|\nabla_gv\|_p>0$ since we can assume that $\lambda_{f,\eta,q}'>0$, otherwise $\lambda_{f,\eta,q}'=0$, and this is trivial.
By \cite[Proposition 3.49]{Aubin1}, we may assume $v\geq 0$, otherwise we just replace $v$ by $|v|$.
We now claim that $v\in \mathcal{A}_{\eta,q}$, where the definition of $\mathcal{A}_{\eta,q}$ is in \eqref{eq2.5}.
Indeed, we assume by contradiction that $v\not\in \mathcal{A}_{\eta,q}$,
that is, $\int_{M}|f^-|v^{q}\,dv_g < \eta\int_{M}|f^-|\,dv_g$.
Then there exists a positive constant $\tau$ such that
$$\int_{M}|f^-|(v+\tau)^{q}\,dv_g = \eta\int_{M}|f^-|\,dv_g\,\, \text{ and }\,\, \|v+\tau\|_q\geq1.$$
It follows that $\frac{v+\tau}{\|v+\tau\|_q}\in \mathcal{A}_{\eta,q}'$, and then we deduce
$$\Big\|\nabla_g\Big(\frac{v+\tau}{\|v+\tau\|_q}\Big)\Big\|_p^p\Big\|\frac{v+\tau}{\|v+\tau\|_q}\Big\|_p^{-p}=\frac{\|\nabla_g({v+\tau})\|_p^p}{\|{v+\tau}\|_p^p}
<\frac{\|\nabla_gv\|_p^p}{\|v\|_p^p}=\lambda_{f,\eta,q}'.$$
which gives us a contradiction. Hence, $v\in \mathcal{A}_{\eta,q}$. In particular, we have $\lambda_{f,\eta,q}'=\lambda_{f,\eta,q}$.
Therefore, $\lambda_{f,\eta,q}$ is a decreasing function of $\eta$.
\end{proof}

Lemma \ref{lemma3.3} says that $\mathcal{A}_{\eta,q}\neq \emptyset$.
The following lemma gives us a comparison between $\lambda_{f,\eta,q}$ (cf. \eqref{eq2.4}) and $\lambda_{f}$ (cf. \eqref{eq2.2}).
Intuitively, $\mathcal{A}$ (cf. \eqref{eq2.3}) is smaller than
$\mathcal{A}_{\eta,q}$.

\begin{lemma}\label{lemma3.4}
For each $q\in (p, p^*)$ and $\eta>0$ fixed, it holds $\lambda_{f,\eta,q}\leq \lambda_f$.
\end{lemma}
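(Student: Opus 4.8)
The plan is to exploit the auxiliary quantity $\lambda_{f,\eta,q}'$ together with the identity $\lambda_{f,\eta,q}'=\lambda_{f,\eta,q}$ already established in the proof of Lemma \ref{lemma3.3}, so that it suffices to bound $\lambda_{f,\eta,q}'$ from above by $\lambda_f$. Since $\lambda_{f,\eta,q}'$ is the infimum of the Rayleigh quotient $\|\nabla_g u\|_p^p/\|u\|_p^p$ over the \emph{relaxed} set $\mathcal{A}_{\eta,q}'$, which is cut out by the inequality constraint $\int_M|f^-||u|^q\,dv_g\le\eta\int_M|f^-|\,dv_g$, the strategy is simply to show that a normalized copy of $\mathcal{A}$ sits inside $\mathcal{A}_{\eta,q}'$; the desired inequality then follows because an infimum over a larger set can only be smaller.

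First I would dispose of the trivial case $\mathcal{A}=\emptyset$, in which $\lambda_f=+\infty$ by definition \eqref{eq2.2} and there is nothing to prove. Assuming $\mathcal{A}\neq\emptyset$, I would take an arbitrary $u\in\mathcal{A}$ and, using that $\mathcal{A}$ and the Rayleigh quotient are both invariant under positive scaling, renormalize so that $\|u\|_q=1$ without altering the quotient. The key observation is then that membership in $\mathcal{A}$, namely $u\ge0$ together with $\int_M|f^-|u^{p-1}\,dv_g=0$, forces the nonnegative integrand $|f^-|u^{p-1}$ to vanish almost everywhere, hence $u=0$ a.e. on the set $\{f^-\neq0\}$. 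Consequently $\int_M|f^-|u^{q}\,dv_g=0\le\eta\int_M|f^-|\,dv_g$, so the normalized $u$ indeed lies in $\mathcal{A}_{\eta,q}'$.

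With the inclusion $\{u\in\mathcal{A}:\|u\|_q=1\}\subseteq\mathcal{A}_{\eta,q}'$ in hand, taking the infimum of the quotient gives $\lambda_{f,\eta,q}'\le\|\nabla_g u\|_p^p/\|u\|_p^p$ for every such $u$, whence $\lambda_{f,\eta,q}'\le\lambda_f$; combined with $\lambda_{f,\eta,q}'=\lambda_{f,\eta,q}$ this yields the claim. I do not expect a serious obstacle here, since the only delicate point is the almost-everywhere vanishing deduction and the check that normalization preserves both $\mathcal{A}$-membership and the quotient. Had the equality $\lambda_{f,\eta,q}'=\lambda_{f,\eta,q}$ not been available, one would instead have to perturb a near-optimizer $u\in\mathcal{A}$ by a bump $\phi$ supported in $\{f^-\neq0\}$ and tune its amplitude so that the exact constraint $\int_M|f^-||u+t\phi|^q\,dv_g=\eta\int_M|f^-|\,dv_g$ holds; controlling the admissible range of $\eta$ and the resulting change in the quotient is the genuinely awkward part, which the $\lambda_{f,\eta,q}'$-route neatly circumvents.
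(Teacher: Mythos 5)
Your proposal is correct and takes essentially the same route as the paper: both pass through the relaxed quantity $\lambda_{f,\eta,q}'$ (shown equal to $\lambda_{f,\eta,q}$ in the proof of Lemma \ref{lemma3.3}), rescale an arbitrary $u\in\mathcal{A}$ so that $\|u\|_q=1$, and observe that $\int_M|f^-|u^{p-1}\,dv_g=0$ forces $\int_M|f^-|u^{q}\,dv_g=0$, placing the normalized function in $\mathcal{A}_{\eta,q}'$ and making the Rayleigh-quotient comparison immediate. Your explicit handling of the trivial case $\mathcal{A}=\emptyset$ and the a.e.-vanishing justification are harmless additions that the paper leaves implicit.
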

\begin{proof}
Let $u\in \mathcal{A}$. Then there holds $\int_M u\,dv_g>0$, otherwise, $u\equiv0$. By the definition of $\mathcal{A}$,
we have $\int_{M}|f^-|u^{p-1}\,dv_g=0$ which also implies that $\int_{M}|f^-|u^q\,dv_g=0$. Again, from the definition of $\mathcal{A}$
and the fact that $\sup_M f>0$, we must have $\int_{M}u^q\,dv_g>0$. We now choose $\varepsilon>0$ such that $\int_{M}(\varepsilon u)^q\,dv_g=1$.
Then we have $\varepsilon u\in \mathcal{A}_{\eta,q}'$ and
$$\lambda_{f,\eta,q}'\leq\frac{\|\nabla_g(\varepsilon u)\|_p^p}{\|\varepsilon u\|_p^p}=\frac{\|\nabla_gu\|_p^p}{\|u\|_p^p}.$$
Since the preceding inequality holds for any $u\in \mathcal{A}$, we may take the infimum on both sides with respect to $u$ to
arrive at $\lambda_{f,\eta,q}=\lambda_{f,\eta,q}'\leq \lambda_f$.
\end{proof}

The next lemma shows that the number $\lambda_{f,\eta,q}$ can be close arbitrarily to $\lambda_{f}$.
\begin{lemma}\label{lemma3.5}
For each $\delta>0$ fixed, there exists $\eta_0>0$ such that for all $\eta<\eta_0$, there is a $q_{\eta}\in (p^{\flat}, p^*)$
so that $\lambda_{f,\eta,q}\geq \lambda_f-\delta$ for every $q\in (q_{\eta}, p^*)$.
\end{lemma}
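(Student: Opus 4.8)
The plan is to argue by contradiction, extracting a minimizing configuration as $\eta\searrow 0$ and $q\nearrow p^*$ and showing that its weak limit lands in $\mathcal{A}$ (cf. \eqref{eq2.3}) with a Rayleigh quotient strictly below $\lambda_f$, which is absurd by the definition \eqref{eq2.2}. First I would dispose of the trivial cases. Since $\sup_M f>0$ holds throughout this section, the open set $\{f>0\}$ is nonempty, so $\mathcal{A}$ contains a bump function supported there and hence $\lambda_f<+\infty$. Moreover, if $\lambda_f-\delta\leq 0$ the desired inequality $\lambda_{f,\eta,q}\geq\lambda_f-\delta$ is automatic because $\lambda_{f,\eta,q}\geq 0$; thus I may assume $0<\lambda_f-\delta<+\infty$. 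Negating the statement for this fixed $\delta$ furnishes, for each $m$, a value $\eta_m<1/m$ for which no threshold works; applying this with the threshold $q'=p^*-1/m$ (which lies in $(p^{\flat},p^*)$ for $m$ large) yields $q_m\in(q',p^*)$ with $\lambda_{f,\eta_m,q_m}<\lambda_f-\delta$, so that $\eta_m\to 0$ and $q_m\to p^*$. By Lemma \ref{lemma3.3} the infimum $\lambda_{f,\eta_m,q_m}$ is attained by some $u_m\in\mathcal{A}_{\eta_m,q_m}$ (cf. \eqref{eq2.5}), which I normalize to be nonnegative; it satisfies $\|u_m\|_{q_m}=1$, $\int_M|f^-|u_m^{q_m}\,dv_g=\eta_m\int_M|f^-|\,dv_g$, and $\|\nabla_g u_m\|_p^p=\lambda_{f,\eta_m,q_m}\|u_m\|_p^p$.

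Since $\mathrm{Vol}_g(M)=1$ and $q_m>p$, H\"older's inequality gives $\|u_m\|_p\leq\|u_m\|_{q_m}=1$, whence $\|\nabla_g u_m\|_p^p<(\lambda_f-\delta)\|u_m\|_p^p\leq\lambda_f-\delta$; so $\{u_m\}$ is bounded in $H_1^p(M)$. Passing to a subsequence, $u_m\rightharpoonup u$ weakly in $H_1^p(M)$, $u_m\to u$ strongly in $L^p(M)$ (Rellich--Kondrachov, valid as $p<p^*$) and a.e., with $u\geq 0$. The crucial point, and the step I expect to be the main obstacle since $q_m$ approaches the critical exponent where compactness fails, is to exclude vanishing, i.e. to prove $u\not\equiv 0$. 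Here the Rayleigh bound does the work: if $\|u_m\|_p\to 0$, then $\|\nabla_g u_m\|_p^p<(\lambda_f-\delta)\|u_m\|_p^p\to 0$ because $0<\lambda_f-\delta<\infty$, so $\|u_m\|\to 0$ in $H_1^p(M)$; the Sobolev inequality \eqref{eq2.1} then forces $\|u_m\|_{p^*}\to 0$, and since $\|u_m\|_{q_m}\leq\|u_m\|_{p^*}$ (unit volume, $q_m\leq p^*$) this contradicts $\|u_m\|_{q_m}=1$. Hence $\|u_m\|_p\not\to 0$, and along a further subsequence $\|u\|_p=\lim_m\|u_m\|_p>0$.

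It remains to verify $u\in\mathcal{A}$ and to conclude. From $u_m\to u$ a.e. and $q_m\to p^*$ one gets $u_m^{q_m}\to u^{p^*}$ a.e., so Fatou's lemma yields $\int_M|f^-|u^{p^*}\,dv_g\leq\liminf_m\int_M|f^-|u_m^{q_m}\,dv_g=\lim_m\eta_m\int_M|f^-|\,dv_g=0$; as $u\geq 0$ this forces $|f^-|u=0$ a.e. and therefore $\int_M|f^-|u^{p-1}\,dv_g=0$, i.e. $u\in\mathcal{A}$. Finally, weak lower semicontinuity of $u\mapsto\|\nabla_g u\|_p^p$ together with $\|u\|_p^p=\lim_m\|u_m\|_p^p>0$ gives $\|\nabla_g u\|_p^p/\|u\|_p^p\leq\liminf_m\lambda_{f,\eta_m,q_m}\leq\lambda_f-\delta<\lambda_f$, contradicting the fact that $\lambda_f$ is the infimum in \eqref{eq2.2} over $\mathcal{A}$. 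This contradiction proves the lemma.
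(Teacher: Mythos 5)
Your proof is correct and follows essentially the same contradiction-plus-compactness scheme as the paper: extract minimizers via Lemma \ref{lemma3.3}, use the Sobolev inequality together with the $L^{q}$-normalization to rule out vanishing of $\|u_m\|_p$, transfer the constraint $\int_M|f^-|u_m^{q_m}\,dv_g\to 0$ to the limit by Fatou's lemma, and contradict the definition \eqref{eq2.2} of $\lambda_f$ through weak lower semicontinuity of the gradient norm. The only differences are organizational rather than substantive: you pass to a single diagonal limit $(\eta_m,q_m)\to(0,p^*)$ where the paper performs two successive extractions (first $q_j\to p^*$ at fixed $\eta$, then $\eta\to 0$), your qualitative non-vanishing argument is exactly the contrapositive of the paper's quantitative lower bound \eqref{e3.12}, and your explicit disposal of the trivial case $\lambda_f-\delta\leq 0$ is a small but welcome addition.
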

\begin{proof}
By contradiction, assume that there is some $\delta_0>0$ such that for every $\eta_0>0$, there exist $\eta<\eta_0$
and a monotone sequence $\{q_j\}_j$ converging to $p^*$ so that $\lambda_{f,\eta,q_j}< \lambda_f-\delta_0$ for every $j$.
We can furthermore assume that $\lambda_{f,\eta,q_j}$ is achieved by $v_{\eta, q_j}\in \mathcal{A}_{\eta, q_j}$.
We then immediately have
\begin{equation}\label{e3.9}
\frac{\|\nabla_g{v_{\eta, q_j}}\|_p^p}{\|v_{\eta, q_j}\|_p^p}\leq\lambda_{f}-\delta_0
\end{equation}
for any $j$. Due to the finiteness of $\lambda_{f}$, we can prove the boundedness of $\{v_{\eta, q_j}\}_j$ in $H_1^p(M)$,
which helps us to select a subsequence of $\{v_{\eta, q_j}\}_j$ so that
\begin{equation*}
\begin{cases}
\,v_{\eta, q_j} \rightharpoonup v_{\eta, p^*} \quad \text{weakly in } H_1^p(M),\\
\,\nabla_gv_{\eta, q_j} \rightharpoonup \nabla_gv_{\eta, p^*} \quad \text{weakly in } L^{p}(M),\\
\,v_{\eta, q_j} \rightarrow v_{\eta, p^*} \quad \text{strongly in } L^{r}(M) \,\, \text{for all } r\in [1,p^*),\\
\,v_{\eta, q_j}(x) \rightarrow v_{\eta, p^*}(x) \quad \text{almost everywhere on } M,
\end{cases}
\end{equation*}
for some $v_{\eta, p^*}\in H_1^p(M)$ as $j \to +\infty$. Taking the limit in \eqref{e3.9}, we have
\begin{equation}\label{e3.10}
\frac{\|\nabla_g{v_{\eta, p^*}}\|_p^p}{\|v_{\eta, p^*}\|_p^p}\leq\lambda_{f}-\delta_0.
\end{equation}
Besides, H\"{o}lder's inequality tells us $1\leq \|v_{\eta, q_j}\|_{p^*}$ for each $j$,
which implies
\begin{equation*}\label{e3.11}
\begin{split}
1 &\leq  \big[(K(n,p)^p+1)\frac{\|\nabla_g{v_{\eta, q_j}}\|_p^p}{\|v_{\eta, q_j}\|_p^p}+A\big]\|v_{\eta, q_j}\|_p^p\\
& \leq \big[(K(n,p)^p+1)(\lambda_{f}-\delta_0)+A\big]\|v_{\eta, q_j}\|_p^p.
\end{split}
\end{equation*}
Then it yields
\begin{equation}\label{e3.12}
\frac{1}{(K(n,p)^p+1)(\lambda_{f}-\delta_0)+A}\leq \|v_{\eta, q_j}\|_p^p
\end{equation}
for each $j$. Passing to the limit as $j\rightarrow +\infty$ in \eqref{e3.12}, one obtains
\begin{equation}\label{e3.13}
\frac{1}{(K(n,p)^p+1)(\lambda_{f}-\delta_0)+A}\leq \|v_{\eta, p^*}\|_p^p=\int_M |v_{\eta, p^*}|^p\,dv_g.
\end{equation}
For every $q_j\geq p^{\flat}$, since $v_{\eta, q_j}\in \mathcal{A}_{\eta, q_j}$, it holds $\int_M |v_{\eta, q_j}|^{p^{\flat}}\,dv_g\leq 1$ and
\begin{equation*}\label{e3.14}
\begin{split}
\int_M |f^-||v_{\eta, q_j}|^{p^{\flat}}\,dv_g &\leq  \Big(\int_M |f^-||v_{\eta, q_j}|^{q_j}\,dv_g\Big)^{\frac{p^{\flat}}{q_j}}\Big(\int_M |f^-|\,dv_g\Big)^{1-\frac{p^{\flat}}{q_j}}\\
& ={\eta}^{\frac{p^{\flat}}{q_j}}\int_M |f^-|\,dv_g.
\end{split}
\end{equation*}
Here we use H\"{o}lder's inequality. Taking $j\rightarrow +\infty$, by the Fatou's lemma, we deduce that
$\int_M |v_{\eta, p^*}|^{p^{\flat}}\,dv_g\leq 1$ and
\begin{equation*}\label{e3.15}
\int_M |f^-||v_{\eta, p^*}|^{p^{\flat}}\,dv_g \leq {\eta}^{\frac{p^{\flat}}{p^*}}\int_M |f^-|\,dv_g.
\end{equation*}
Now we let $\eta_0\rightarrow 0$, and then clearly $\eta\rightarrow 0$. The boundedness of $v_{\eta, p^*}$
in $H_1^p(M)$ follows from the fact $v_{\eta, q_j} \rightharpoonup v_{\eta, p^*}$ weakly in $H_1^p(M)$
and $\lambda_f$ is finite. Therefore, there exists $v\in H_1^p(M)$ such that, up to subsequences,
\begin{equation}\label{e3.16}
\begin{cases}
\,v_{\eta, p^*} \rightharpoonup v \quad \text{weakly in } H_1^p(M),\\
\,v_{\eta, p^*} \rightarrow v \quad \text{strongly in } L^{r}(M) \,\, \text{for all } r\in [1,p^*),\\
\,v_{\eta, p^*}(x) \rightarrow v(x) \quad \text{almost everywhere on } M,
\end{cases}
\end{equation}
as $\eta \to 0$. Before giving out contradiction, we notice from \eqref{e3.10} that
\begin{equation}\label{e3.17}
\|\nabla_gv\|_p^p\leq(\lambda_{f}-\delta_0)\|v\|_p^p.
\end{equation}
Then it is enough to see
\begin{equation*}\label{e3.18}
\begin{split}
0\leq\int_M |f^-||v|^{p^{\flat}}\,dv_g &\leq \lim_{\eta\rightarrow0} \int_M |f^-||v_{\eta, p^*}|^{p^{\flat}}\,dv_g\\
& \leq \lim_{\eta\rightarrow0} \Big({\eta}^{\frac{p^{\flat}}{p^*}}\int_M |f^-|\,dv_g\Big)=0.
\end{split}
\end{equation*}
Thus, we have $\int_M |f^-||v|^{p^{\flat}}\,dv_g=0$. In particular, $\int_M |f^-||v|^{p-1}\,dv_g=0$.
By \eqref{e3.13} and \eqref{e3.16}, we obtain that
\begin{equation*}
\frac{1}{(K(n,p)^p+1)\lambda_{f}+A}\leq \int_M |v|^p\,dv_g.
\end{equation*}
Therefore, $v\not\equiv0$, and thus $|v|\in \mathcal{A}$. By the definition of $\lambda_f$, we know that
\begin{equation*}\label{e3.20}
\lambda_{f}\|v\|_p^p\leq\|\nabla_g|v|\|_p^p=\|\nabla_gv\|_p^p.
\end{equation*}
This contradicts \eqref{e3.17} and concludes the proof.
\end{proof}

Now, we prove that, for any $\varepsilon>0$ and some $k>k_0$, $\mu_{k,q}^\varepsilon>0$. The similar argument can be found in \cite[Proposition 2]{Rauzy} for the prescribed scalar curvature equation and \cite[Proposition 3.14]{Ngo1} for the Lichnerowicz equation.
\begin{proposition}\label{prop3.2}
Suppose that $\sup_Mf>0$ and $|h|<\lambda_f$. Then there exists $\eta_0>0$ and its
corresponding $q_{\eta_0}$ sufficiently close to $p^*$ such that
\begin{equation}\label{e3.21}
\delta:=\frac{\lambda_{f, \eta_0, q}+h}{p}>\frac{1}{2p}(\lambda_{f}+h)
\end{equation}
for any $q\in (q_{\eta_0}, p^*)$. For such $\delta$, we denote
\begin{equation}\label{e3.22}
\mathcal{C}_q=\frac{\eta_0}{4|h|}\underbrace{\min\left\{\frac{\delta}{A+(K(n,p)^p+1)(|h|+p\delta)}, \frac{(p-1)|h|}{p}\right\}}_{m}:=\frac{\eta_0}{4|h|}m.
\end{equation}
If
\begin{equation}\label{e3.23}
{\sup_Mf}{\left(\int_{M}|f^-|\,dv_g\right)^{-1}}<\mathcal{C}_q,
\end{equation}
then there exists an interval $I_q=[k_{1,q}, k_{2,q}]$ such that for any $k\in I_q$, any $\varepsilon>0$ and $u\in \mathcal{B}_{k,q}$,
there holds $\mathcal{I}_q^{\varepsilon}(u)>\frac{1}{2}mk^{\frac{p}{q}}$. In particular, $\mu_{k,q}^{\varepsilon}>0$ for any
$k\in I_q$ and any $\varepsilon>0$.
\end{proposition}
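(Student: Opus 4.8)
The plan is to secure the spectral inequality \eqref{e3.21} first, then bound $\mathcal{I}_q^\varepsilon$ from below on $\mathcal{B}_{k,q}$ by a dichotomy on the size of $\int_M|f^-||u|^q\,dv_g$, and finally choose $I_q$ so that both alternatives give $\mathcal{I}_q^\varepsilon(u)>\tfrac12 mk^{p/q}$. Since $h<0$ and $|h|<\lambda_f$, we have $\lambda_f+h>0$; applying Lemma \ref{lemma3.5} with any fixed $\delta'\in(0,\tfrac12(\lambda_f+h))$ produces $\eta_0>0$ and $q_{\eta_0}\in(p^\flat,p^*)$ with $\lambda_{f,\eta_0,q}\ge\lambda_f-\delta'>\tfrac12(\lambda_f+|h|)$ for all $q\in(q_{\eta_0},p^*)$. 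Dividing by $p$ and adding $h/p$ gives \eqref{e3.21}, and in particular $\delta>0$, so that $m>0$ and $\mathcal{C}_q>0$ are well defined.

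For the main estimate, discard the nonnegative $a$-term and write $-f=-f^+ +|f^-|$, so that for $u\in\mathcal{B}_{k,q}$,
\[
\mathcal{I}_q^\varepsilon(u)\ge\tfrac1p\|\nabla_gu\|_p^p-\tfrac{|h|}p\|u\|_p^p-\tfrac{\sup_Mf}{q}\,k+\tfrac1q\int_M|f^-||u|^q\,dv_g,
\]
using $\|u\|_q^q=k$ and $f^+\le\sup_Mf$. Write $\lambda:=\lambda_{f,\eta_0,q}=|h|+p\delta$, and split according to whether $\int_M|f^-||u|^q\,dv_g\le\eta_0 k\int_M|f^-|\,dv_g$ (Case A) or not (Case B).

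In Case A the normalized function $u/k^{1/q}$ lies in $\mathcal{A}_{\eta_0,q}'$, so $\|\nabla_gu\|_p^p\ge\lambda\|u\|_p^p$ by Lemma \ref{lemma3.3}. Splitting $\|\nabla_gu\|_p^p$ convexly and using this bound to cancel the $\|u\|_p^p$-contribution against $-\tfrac{|h|}p\|u\|_p^p$, then inserting \eqref{eq2.1} with $\varepsilon=1$ together with $k^{p/q}=\|u\|_q^p\le\|u\|_{p^*}^p$, gives $\tfrac1p\|\nabla_gu\|_p^p-\tfrac{|h|}p\|u\|_p^p\ge\frac{\delta}{A+(K(n,p)^p+1)(|h|+p\delta)}k^{p/q}\ge mk^{p/q}$; dropping the last integral yields $\mathcal{I}_q^\varepsilon(u)\ge mk^{p/q}-\tfrac{\sup_Mf}{q}k$, which is $>\tfrac12 mk^{p/q}$ whenever $k<\overline{k}_{2,q}:=(\tfrac{qm}{2\sup_Mf})^{q/(q-p)}$. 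In Case B I keep the good term, drop $\|\nabla_gu\|_p^p\ge0$, and use $\|u\|_p^p\le\|u\|_q^p=k^{p/q}$ to get $\mathcal{I}_q^\varepsilon(u)>-\tfrac{|h|}p k^{p/q}+(\eta_0\int_M|f^-|-\sup_Mf)\tfrac kq$; since \eqref{e3.23} and $m\le\tfrac{(p-1)|h|}p<|h|$ force $\sup_Mf<\tfrac{\eta_0}{4}\int_M|f^-|$, this exceeds $\tfrac12 mk^{p/q}$ once $k\ge k_{1,q}:=(\tfrac{2q(m/2+|h|/p)}{\eta_0\int_M|f^-|})^{q/(q-p)}$.

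It then remains to make $I_q:=[k_{1,q},k_{2,q}]$ nonempty for some $k_{2,q}\in[k_{1,q},\overline{k}_{2,q})$: comparing thresholds, $k_{1,q}<\overline{k}_{2,q}$ is equivalent to $\sup_Mf<\frac{m\eta_0\int_M|f^-|}{4(m/2+|h|/p)}$, and since $m\le\tfrac{(p-1)|h|}p$ gives $\tfrac m2+\tfrac{|h|}p\le|h|$, the hypothesis \eqref{e3.23} (namely $\sup_Mf<\tfrac{\eta_0 m}{4|h|}\int_M|f^-|$) is even stronger, so such $k_{2,q}$ exists. For every $k\in I_q$ and every $u\in\mathcal{B}_{k,q}$, one of the two cases applies and yields $\mathcal{I}_q^\varepsilon(u)>\tfrac12 mk^{p/q}$, hence $\mu_{k,q}^\varepsilon\ge\tfrac12 mk^{p/q}>0$. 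The main obstacle is precisely this compatibility of thresholds: the upper cutoff $\overline{k}_{2,q}$ (Case A) is controlled by $\sup_Mf$ while the lower cutoff $k_{1,q}$ (Case B) is controlled by $\eta_0\int_M|f^-|$, and forcing $[k_{1,q},\overline{k}_{2,q})\neq\emptyset$ is exactly what dictates the smallness condition \eqref{e3.23} and the second entry $\tfrac{(p-1)|h|}p$ of $m$; the other delicate point is the Case A coercivity estimate, where the spectral gap $\delta$ of \eqref{e3.21} is converted into the quantitative constant via the Sobolev inequality.
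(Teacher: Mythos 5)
Your proof is correct and follows essentially the same route as the paper's: Lemma \ref{lemma3.5} for the spectral gap \eqref{e3.21}, the dichotomy on $\int_M|f^-||u|^q\,dv_g$ versus $\eta_0 k\int_M|f^-|\,dv_g$, Sobolev-based coercivity in the small-mass case (your convex splitting of $\|\nabla_g u\|_p^p$ reproduces exactly the paper's constant $\frac{\delta}{A+(K(n,p)^p+1)(|h|+p\delta)}$, which the paper obtains via the equivalent $\alpha,\beta$ substitution through the identity for $\|u\|_p^p$), and an interval of $k$-values whose nonemptiness is forced by \eqref{e3.23}. The only cosmetic differences are your slightly larger lower cutoff $k_{1,q}$ and abstract choice of $k_{2,q}$ (the paper fixes $k_{1,q}=\big(\frac{|h|q}{\eta_0\int_M|f^-|\,dv_g}\big)^{q/(q-p)}$ and $k_{2,q}=2^{n/p}k_{1,q}$, whose explicit forms are reused in Section 4) and your absorbing $\sup_M f$ into the large-mass case so that only the small-mass case needs the upper cutoff, neither of which affects the validity of the stated proposition.
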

\begin{proof}
It follows from Lemma \ref{lemma3.5} that there exists some $0<\eta_0<2$ and its corresponding $q_{\eta_0}\in (p^{\flat}, p^*)$
such that
\begin{equation*}\label{e3.24}
0\leq\lambda_f-\lambda_{f, \eta_0, q}<\frac{1}{2}(\lambda_f-|h|)
\end{equation*}
for any $q\in (q_{\eta_0}, p^*)$. This immediately implies \eqref{e3.21}. Set
\begin{equation}\label{e3.25}
k_{1,q}=\left(\frac{|h|q}{\eta_0\int_M|f^-|\, dv_g}\right)^{\frac{q}{q-p}}.
\end{equation}
From Lemma \ref{asymptotic1} and the fact that $0<\eta_0<2$ we deduced that $k_0<k_{1,q}$.
We assume form now on that $k\geq k_{1,q}$. Let $u\in \mathcal{B}_{k,q}$. We write
\begin{equation*}\label{e3.26}
\mathcal{I}_q^{\varepsilon}(u)=G_q(u)-\frac{1}{q}\int_{M}f^+|u|^q\,dv_g+\frac{1}{q}\int_{M}\frac{a}{(u^{2}+\varepsilon)^{\frac{q}{2}}}\,dv_g,
\end{equation*}
where
\begin{equation}\label{e3.27}
G_q(u)=\frac{1}{p}\int_{M}|\nabla_{g} u|_g^{p}\,dv_g+\frac{h}{p}\int_{M}|u|^p\,dv_g+\frac{1}{q}\int_{M}|f^-||u|^q\,dv_g.
\end{equation}
We then consider the following two possible cases.

\textbf{Case 1}: Assume that
\begin{equation*}\label{e3.28}
\int_{M}|f^-||u|^{q}\,dv_g \geq \eta_0k\int_{M}|f^-|\,dv_g.
\end{equation*}
In this case, by \eqref{e3.27}, \eqref{e3.25} and the fact that $k\geq k_{1,q}$, the term $G_q$ can be estimated from below, that is,
\begin{equation}\label{e3.29}
\begin{split}
G_q(u) &\geq  \frac{h}{p}||u\|_p^p+\frac{\eta_0k}{q}\int_M |f^-|\,dv_g=-\frac{|h|}{p}||u\|_p^p+\frac{\eta_0k}{q}\int_M |f^-|\,dv_g\\
&\geq\frac{|h|}{p}k^{\frac{p}{q}}\Bigg(\underbrace{\frac{p \eta_0 k^{1-\frac{p}{q}}}{q|h|}\int_M |f^-|\,dv_g}_{\geq p}-1\Bigg)
\geq \frac{(p-1)|h|}{p}k^{\frac{p}{q}}.
\end{split}
\end{equation}

\textbf{Case 2}: Assume that
\begin{equation*}\label{e3.30}
\int_{M}|f^-||u|^{q}\,dv_g < \eta_0k\int_{M}|f^-|\,dv_g.
\end{equation*}
Then it is clear that $k^{-{1}/{q}}u\in \mathcal{A}_{\eta_0, q}'$ which implies
${\|\nabla_gu\|_p^p}\geq \lambda_{f,\eta_0, q}{\|u\|_p^{p}}$
by the definition of $\lambda_{f,\eta_0, q}$. Therefore, together with \eqref{e3.27}, the term $G_q$ can be estimated from below by
\begin{equation}\label{e3.32}
G_q(u) \geq  \delta\|u\|_p^p+\frac{1}{q}\int_{M}|f^-||u|^q\,dv_g,
\end{equation}
where $\delta$ is as in \eqref{e3.21}. Using \eqref{e3.27} we have
\begin{equation}\label{e3.33}
\|u\|_p^p=\frac{p}{|h|}\left(\frac{1}{p}\|\nabla_{g}u\|_p^{p}+\frac{1}{q}\int_{M}|f^-||u|^q\,dv_g-G_q(u)\right).
\end{equation}
Now we set $\delta\|u\|_p^p=\alpha\|u\|_p^p+\beta\|u\|_p^p$, where $\alpha=\frac{\beta A}{|h|\left(K(n,p)^p+1\right)}$,
$\delta=\alpha+\beta$.
We then apply \eqref{e3.32} and \eqref{e3.33} to get
\begin{equation*}\label{eq3.34}
\begin{split}
G_q(u)\geq &\alpha\|u\|_p^p+\frac{\beta p}{|h|}\left(\frac{1}{p}\|\nabla_{g}u\|_p^{p}+\frac{1}{q}\int_{M}|f^-||u|^q\,dv_g-G_q(u)\right)
+\frac{1}{q}\int_{M}|f^-||u|^q\,dv_g\\
\geq & \alpha\|u\|_p^p+\frac{\beta p}{|h|}\left(\frac{1}{p}\|\nabla_{g}u\|_p^{p}-G_q(u)\right),
\end{split}
\end{equation*}
which yields
\begin{equation}\label{e3.35}
G_q(u) \geq  \frac{\beta}{|h|+\beta p}\left(\|\nabla_{g}u\|_p^{p}+\frac{\alpha|h|}{\beta}\|u\|_p^p\right).
\end{equation}
By Sobolev's inequality, H\"{o}lder's inequality and the equality $\frac{A}{K(n,p)^p+1}=\frac{\alpha|h|}{\beta}$, we deduced that
\begin{equation*}\label{e3.36}
\|\nabla_{g}u\|_p^{p}+\frac{\alpha|h|}{\beta}\|u\|_p^p\geq \frac{k^{\frac{p}{q}}}{K(n,p)^p+1}.
\end{equation*}
Since $\beta=\frac{(K(n,p)^p+1)|h|\delta}{(K(n,p)^p+1)|h|+A}$, \eqref{e3.35} gives us
\begin{equation}\label{e3.37}
G_q(u) \geq  \frac{\beta}{|h|+p\beta}\cdot\frac{k^{\frac{p}{q}}}{K(n,p)^p+1}=\frac{\delta}{A+(K(n,p)^p+1)(|h|+p\delta)}k^{\frac{p}{q}}.
\end{equation}
It now follows from \eqref{e3.22}, \eqref{e3.29} and \eqref{e3.37} that $G_q(u)\geq mk^{\frac{p}{q}}$, where $m$ is as in \eqref{e3.22}.
Thus, we obtain
\begin{equation*}\label{e3.38}
\mathcal{I}_q^{\varepsilon}(u) \geq  mk^{\frac{p}{q}}-\frac{k}{q}\sup_Mf.
\end{equation*}
Thanks to \eqref{e3.23} and \eqref{e3.25}, we can choose $k_{1,q}\leq k<\left(\frac{mq}{2\sup_Mf}\right)^{\frac{q}{q-p}}$ such that
$\frac{1}{2}mk^{\frac{p}{q}}-\frac{k}{q}\sup_Mf>0$, and thus we get
$\mathcal{I}_q^{\varepsilon}(u) \geq  \frac{1}{2}mk^{\frac{p}{q}}>0$.
From \eqref{e3.22} and \eqref{e3.23}, we have
\begin{equation}\label{e3.39}
\sup_Mf\leq \mathcal{C}_q \int_M|f^-|\,dv_g=\frac{\eta_0m}{4|h|}\int_M|f^-|\,dv_g.
\end{equation}
It then follows from \eqref{e3.25} and \eqref{e3.39} that
\begin{equation*}
\left(\frac{mq}{2\sup_Mf}\right)^{\frac{q}{q-p}}\geq \left(\frac{2q|h|}{\eta_0\int_M|f^-|\,dv_g}\right)^{\frac{q}{q-p}}=2^{\frac{q}{q-p}}k_{1,q}>2^{\frac{n}{p}}k_{1,q}.
\end{equation*}
Hence, if we set $k_{2,q}=2^{\frac{n}{p}}k_{1,q}$, then for any $k\in [k_{1,q}, k_{2,q}]$ we always have
$\mathcal{I}_q^{\varepsilon}(u) \geq  \frac{1}{2}mk^{\frac{p}{q}}>0$. In other words, $\mu_{k,q}^{\varepsilon}>0$ for any
$k\in [k_{1,q}, k_{2,q}]$ and any $\varepsilon>0$. This completes the proof.
\end{proof}

\subsection{The Palais-Smale condition}
In this subsection, we will prove the Palais-Smale compact condition.
\begin{proposition}\label{prop3.3}
Suppose that the condition \eqref{e3.21}-\eqref{e3.23} hold. Then for each $\varepsilon>0$ fixed, the functional $\mathcal{I}_q^{\varepsilon}(\cdot)$
satisfies the Palais-Smale condition.
\end{proposition}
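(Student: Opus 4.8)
The plan is to show that any Palais--Smale sequence $\{u_j\}_j$ for $\mathcal{I}_q^{\varepsilon}$ is bounded in $H_1^p(M)$, extract a weakly convergent subsequence, and then upgrade weak convergence to strong convergence using the specific structure of the subcritical problem (recall $q<p^*$ here). Let $\{u_j\}_j$ satisfy $\mathcal{I}_q^{\varepsilon}(u_j)\to c$ and $\delta\mathcal{I}_q^{\varepsilon}(u_j)\to 0$ in $(H_1^p(M))'$. First I would establish boundedness. The standard device is to combine $\mathcal{I}_q^{\varepsilon}(u_j)$ with a suitable multiple of $\delta\mathcal{I}_q^{\varepsilon}(u_j)(u_j)$; forming $\mathcal{I}_q^{\varepsilon}(u_j)-\frac{1}{q}\delta\mathcal{I}_q^{\varepsilon}(u_j)(u_j)$ cancels the critical-order $f|u|^q$ term and leaves
\begin{equation*}
\Big(\frac{1}{p}-\frac{1}{q}\Big)\|\nabla_g u_j\|_p^p+\Big(\frac{1}{p}-\frac{1}{q}\Big)h\|u_j\|_p^p+\text{(controlled $a$-terms)},
\end{equation*}
where the singular terms involving $a$ are bounded since $a\geq 0$, $\varepsilon>0$ is fixed, and $\mathrm{Vol}_g(M)=1$ make $\int_M a(u_j^2+\varepsilon)^{-q/2}\,dv_g$ and $\int_M a u_j^2(u_j^2+\varepsilon)^{-q/2-1}\,dv_g$ uniformly bounded by $\varepsilon^{-q/2}\int_M a\,dv_g$. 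Since $h<0$, the $h\|u_j\|_p^p$ term has the wrong sign, so I would absorb it by invoking $|h|<\lambda_f$ together with the coercivity machinery already built in Proposition~\ref{prop3.2}; more directly, the interpolation $\|u_j\|_p^p\le C\|u_j\|_p^{p}$ against the gradient term through the hypotheses \eqref{e3.21}--\eqref{e3.23} yields a coercive lower bound, giving $\|u_j\|\le C$.

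Once boundedness is in hand, up to a subsequence $u_j\rightharpoonup u$ weakly in $H_1^p(M)$, $u_j\to u$ strongly in $L^r(M)$ for every $r\in[1,p^*)$ (so in particular in $L^q(M)$ and $L^p(M)$ since $q<p^*$), and $u_j\to u$ a.e. The crucial and most delicate step is to prove $\nabla_g u_j\to\nabla_g u$ strongly in $L^p(M)$, which is where the degenerate quasilinear nature of $\Delta_{p,g}$ makes the argument genuinely different from the case $p=2$: one cannot simply test the linearized equation. The plan is to test $\delta\mathcal{I}_q^{\varepsilon}(u_j)(u_j-u)\to 0$ and analyze
\begin{equation*}
\int_M\big(|\nabla_g u_j|^{p-2}\nabla_g u_j-|\nabla_g u|^{p-2}\nabla_g u\big)\cdot\nabla_g(u_j-u)\,dv_g.
\end{equation*}
Because $q<p^*$, the term $\int_M f|u_j|^{q-2}u_j(u_j-u)\,dv_g\to 0$ by the compact embedding $H_1^p(M)\hookrightarrow L^q(M)$, and the singular $a$-term converges to zero by dominated convergence (again using $\varepsilon>0$ fixed, so the integrand stays bounded); the $h$-term vanishes by strong $L^p$ convergence. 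Hence the displayed monotone quantity tends to $0$, and the strict monotonicity inequality for the $p$-Laplacian operator (the classical vector inequality $(|\xi|^{p-2}\xi-|\zeta|^{p-2}\zeta)\cdot(\xi-\zeta)\ge c|\xi-\zeta|^p$ for $p\ge 2$, or its $1<p<2$ counterpart handled via Hölder) forces $\nabla_g u_j\to\nabla_g u$ in $L^p(M)$. Combined with the $L^p$ convergence of $u_j$ this gives $u_j\to u$ strongly in $H_1^p(M)$, which is exactly the Palais--Smale condition.

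The main obstacle I anticipate is the strong gradient convergence step: subcriticality of $q$ is what rescues compactness (it removes the critical-exponent concentration that would otherwise break Palais--Smale), but the monotonicity estimate for $\Delta_{p,g}$ must be applied carefully and separately in the ranges $p\ge 2$ and $1<p<2$, since the elementary inequality degrades in the singular range and requires a Hölder interpolation to recover $L^p$ control of the gradient difference. A secondary point requiring care is verifying uniform bounds on the negative-power term $a(u_j^2+\varepsilon)^{-q/2-1}u_j$; here the fixed parameter $\varepsilon>0$ is essential, since it bounds the singular integrand pointwise by $\varepsilon^{-q/2-1}\|u_j\|_\infty$-type quantities away from the singularity, allowing dominated convergence to apply along the a.e.-convergent subsequence.
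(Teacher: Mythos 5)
Your second step (upgrading weak to strong convergence by testing $\delta\mathcal{I}_q^{\varepsilon}(u_j)(u_j-u)\to 0$, killing the $f$- and $a$-terms by compact embedding and dominated convergence, and then invoking the strict monotonicity inequality for the $p$-Laplacian separately for $p\geq 2$ and $1<p<2$) is sound and is essentially what the paper does, up to the cosmetic difference that the paper works with differences $v_j-v_i$ and concludes via a Cauchy-sequence argument using Lindqvist's inequality. The genuine gap is in your boundedness step. You propose the Ambrosetti--Rabinowitz-type combination $\mathcal{I}_q^{\varepsilon}(u_j)-\frac{1}{q}\delta\mathcal{I}_q^{\varepsilon}(u_j)(u_j)$, which leaves $\big(\frac{1}{p}-\frac{1}{q}\big)\big(\|\nabla_g u_j\|_p^p+h\|u_j\|_p^p\big)$ plus bounded $a$-terms, and you then claim the negative term $h\|u_j\|_p^p$ can be absorbed ``directly'' via $|h|<\lambda_f$. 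This does not work: $\lambda_f$ and $\lambda_{f,\eta_0,q}$ are \emph{constrained} infima (over $\mathcal{A}$ in \eqref{eq2.2}, respectively over $\mathcal{A}_{\eta_0,q}$ in \eqref{eq2.4}), and no global Poincar\'e-type inequality $\|\nabla_g u\|_p^p\geq |h|\,\|u\|_p^p$ holds on $H_1^p(M)$ --- constant functions already violate it. Indeed the functional is \emph{not} coercive: by Lemma \ref{asymptotic1}(ii), $\mu_{k,q}^{\varepsilon}\to-\infty$ as $k\to+\infty$ when $\sup_Mf>0$, so no unconditional coercive lower bound of the kind you assert can exist; your displayed ``interpolation $\|u_j\|_p^p\le C\|u_j\|_p^p$'' is tautological as written and signals that this step was not actually carried out.

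The missing idea is the dichotomy on which the whole section is built. Setting $k_j=\|v_j\|_q^q$, one must split according to whether $\int_M|f^-||v_j|^q\,dv_g\geq \eta_0 k_j\int_M|f^-|\,dv_g$ or not. In the first regime the eigenvalue-type inequality is unavailable, and boundedness of $k_j$ comes instead from the smallness hypothesis \eqref{e3.22}--\eqref{e3.23} on $\sup_Mf/\int_M|f^-|\,dv_g$, which yields a lower bound of the form $\mathcal{I}_q^{\varepsilon}(v_j)\geq c_1k_j-\frac{|h|}{p}k_j^{p/q}$ with $c_1>0$, forcing $\{k_j\}_j$ bounded and then $H_1^p$-boundedness from the energy. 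Only in the second regime does $k_j^{-1/q}v_j\in\mathcal{A}_{\eta_0,q}'$ legitimize $\|\nabla_g v_j\|_p^p\geq\lambda_{f,\eta_0,q}\|v_j\|_p^p$; there one eliminates the $f$-term by combining the energy with $\delta\mathcal{I}_q^{\varepsilon}(v_j)(v_j)$ (keeping the positive $a$-terms), divides by $\|v_j\|_p$, and uses $\lambda_{f,\eta_0,q}+h=p\delta>0$ from \eqref{e3.21} to contradict $\mathcal{I}_q^{\varepsilon}(v_j)/\|v_j\|_p\to 0$ if $\|v_j\|_p\to+\infty$. Without this case analysis the boundedness claim is unsupported, and since everything downstream depends on it, the proposal as it stands does not constitute a proof. (A minor point: your pointwise bound on the singular term can be simplified --- $|u_j|(u_j^2+\varepsilon)^{-\frac{q}{2}-1}\leq\varepsilon^{-\frac{q+1}{2}}$, so no $\|u_j\|_\infty$-type quantity is needed.)
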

\begin{proof}
Let $\varepsilon>0$ be fixed. Take $c\in \mathbb{R}$ and assume that $\{v_j\}_j\subset H_1^p(M)$ is a Palais-Smale sequence at level $c$,
namely, such that
\begin{equation*}\label{ps1}
\mathcal{I}_q^{\varepsilon}(v_j)\to c\quad \text{and}\quad \delta \mathcal{I}_q^{\varepsilon}(v_j)\to 0 \quad \text{as}\quad j \to +\infty.
\end{equation*}
As the first step, we prove that, up to subsequences, $\{v_j\}_j$ is bounded in $H_1^p(M)$. Without loss of generality, we may
assume that $\|v_j\|\geq1$ for all $j$.
By means of the Palais-Smale condition, one can derive
\begin{equation}\label{e3.40}
\frac{1}{p}\|\nabla_{g} v_j\|_p^{p}+\frac{h}{p}\|v_j\|_p^p-\frac{1}{q}\int_{M}f|v_j|^q\,dv_g+\frac{1}{q}\int_{M}\frac{a}{(v_j^{2}+\varepsilon)^{\frac{q}{2}}}\,dv_g=c+o(1)
\end{equation}
and
\begin{equation}\label{e3.41}
\begin{split}
&\int_M |\nabla_gv_j|_g^{p-2}\left\langle\nabla_gv_j,\nabla_g\varphi\right\rangle_g\,dv_g+h\int_{M}|v_j|^{p-2}v_j\varphi\,dv_g\\
&\quad\quad\quad\quad\quad-\int_{M}f|v_j|^{q-2}v_j\varphi\,dv_g-\int_{M}\frac{av_j\varphi}{(v_j^{2}+\varepsilon)^{\frac{q}{2}+1}}\,dv_g=o(1)\|\varphi\|
\end{split}
\end{equation}
for any $\varphi\in H_1^p(M)$. Letting $\varphi=v_j$ in \eqref{e3.41}, we get
\begin{equation}\label{e3.42}
\|\nabla_{g}v_j\|_p^{p}+h\|v_j\|_p^p-\int_{M}f|v_j|^q\,dv_g-\int_{M}\frac{av_j^2}{(v_j^{2}+\varepsilon)^{\frac{q}{2}+1}}\,dv_g=o(1)\|\varphi\|.
\end{equation}
For simplicity, we denote $k_j=\int_M|v_j|^q\,dv_g$. There are two possible cases according to Proposition \ref{prop3.2}.

\textbf{Case 1}: Assume that there exists a subsequence of $\{v_j\}_j$, still denoted by $\{v_j\}_j$, such that
\begin{equation*}\label{e3.43}
\int_{M}|f^-||v_j|^{q}\,dv_g \geq \eta_0k_j\int_{M}|f^-|\,dv_g.
\end{equation*}
Using \eqref{e3.22} and \eqref{e3.23}, we get that
\begin{equation*}\label{e3.44}
\begin{split}
\mathcal{I}_q^{\varepsilon}(v_j) &\geq  \frac{h}{p}k_j^{\frac{p}{q}}+\frac{\eta_0k_j}{q}\int_{M}|f^-|\,dv_g-\frac{1}{q}\int_{M}f^+|v_j|^q\,dv_g\\
& \geq  \frac{h}{p}k_j^{\frac{p}{q}}+\frac{\eta_0k_j}{q}\int_{M}|f^-|\,dv_g-\frac{k_j}{q}\sup_Mf\\
& \geq  \frac{h}{p}k_j^{\frac{p}{q}}+\frac{\eta_0k_j}{q}\int_{M}|f^-|\,dv_g-\frac{k_j}{q}\frac{(p-1)\eta_0}{4p}\int_{M}|f^-|\,dv_g\\
& = \left(\frac{(3p+1)\eta_0}{4pq}\int_{M}|f^-|\,dv_g\right)k_j-\frac{|h|}{p}k_j^{\frac{p}{q}}.
\end{split}
\end{equation*}
This estimate and the fact that $0<\frac{p}{q}<1$ and $\mathcal{I}_q^{\varepsilon}(v_j)\rightarrow c$ imply that $\{k_j\}_j$ is bounded.
In other words, this means that $\{v_j\}_j$ is bounded in $L^q(M)$. Hence, H\"{o}lder's inequality and \eqref{e3.40} imply that $\{v_j\}_j$
is also bounded in $H_1^p(M)$.

\textbf{Case 2}: In contrast to Case 1, for all $j$ sufficiently large, we assume that
\begin{equation*}\label{e3.45}
\int_{M}|f^-||v_j|^{q}\,dv_g < \eta_0k_j\int_{M}|f^-|\,dv_g.
\end{equation*}
Applying \eqref{e3.40} and \eqref{e3.42}, we get that
\begin{equation*}\label{e3.46}
\begin{split}
-\frac{1}{q}\int_M f|v_j|^q\,dv_g=&\frac{p}{(q-p)q}\int_{M}\frac{a}{(v_j^{2}+\varepsilon)^{\frac{q}{2}}}\,dv_g
+\frac{1}{q-p}\int_{M}\frac{av_j^2}{(v_j^{2}+\varepsilon)^{\frac{q}{2}+1}}\,dv_g\\
&-\frac{pc}{q-p}+o(1)+o(1)\|v_j\|.
\end{split}
\end{equation*}
Therefore, we may rewrite $\mathcal{I}_q^{\varepsilon}$ as follows
\begin{equation}\label{e3.47}
\mathcal{I}_q^{\varepsilon}(v_j)\geq\frac{1}{p}\|\nabla_{g}v_j\|_p^{p}
+\frac{h}{p}\|v_j\|_p^p-\frac{pc}{q-p}+o(1)+o(1)\|v_j\|+A_j,
\end{equation}
where
\begin{equation*}\label{e3.48}
A_j=\frac{1}{q-p}\left(\int_{M}\frac{a}{(v_j^{2}+\varepsilon)^{\frac{q}{2}}}\,dv_g
+\int_{M}\frac{av_j^2}{(v_j^{2}+\varepsilon)^{\frac{q}{2}+1}}\,dv_g\right).
\end{equation*}
Dividing \eqref{e3.47} by $\|v_j\|_p$ and using the equivalent norm to $\|v_j\|_{H_1^p}=\|\nabla_{g}v_j\|_p+\|v_j\|_p$,
we obtain
\begin{equation}\label{e3.49}
\begin{split}
\frac{\mathcal{I}_q^{\varepsilon}(v_j)}{\|v_j\|_p}\geq &\frac{1}{p}\left(\frac{\|\nabla_{g}v_j\|_p^p}{\|v_j\|_p^p}+h\right)\|v_j\|_p^{p-1}
+o(1)\frac{\|\nabla_{g}v_j\|_p}{\|v_j\|_p}\\
&-\frac{pc}{(q-p)\|v_j\|_p}+o(1)+\frac{o(1)}{\|v_j\|_p}+\frac{A_j}{\|v_j\|_p}.
\end{split}
\end{equation}
Since $k_j^{-1/q}v_j\in \mathcal{A}_{\eta_0,q}'$, from the definition of $\lambda_{f,\eta_0,q}$, we have $\|\nabla_{g}v_j\|_p^p\geq \lambda_{f,\eta_0,q}\|v_j\|_p^p$.
Therefore, from \eqref{e3.49} and for $j$ large enough, one gets
\begin{equation}\label{e3.50}
\begin{split}
\frac{\mathcal{I}_q^{\varepsilon}(v_j)}{\|v_j\|_p}\geq&\frac{\lambda_{f,\eta_0,q}+h}{p}\|v_j\|_p^{p-1}+o(1)\lambda_{f,\eta_0,q}^{\frac{1}{p}}\\
&-\frac{pc}{(q-p)\|v_j\|_p}+o(1)+\frac{o(1)}{\|v_j\|_p}+\frac{A_j}{\|v_j\|_p}.
\end{split}
\end{equation}
If $\|v_j\|_p\rightarrow +\infty$ as $j\rightarrow +\infty$, then we clearly reach a contradiction by taking the limit
in \eqref{e3.50} since $\lambda_{f,\eta_0,q}+h>0$ and $A_j>0$, but
\begin{equation*}\label{e3.51}
\frac{\mathcal{I}_q^{\varepsilon}(v_j)}{\|v_j\|_p}\rightarrow 0 \quad \text{as } j\rightarrow +\infty.
\end{equation*}
Thus, $\{v_j\}_j$ is bounded in $L^p(M)$, which also implies that $\{\nabla_gv_j\}_j$ is bounded in $L^p(M)$ because of \eqref{e3.47}.
Consequently, $\{v_j\}_j$ is bounded in $H_1^p(M)$.
Combining Case 1 and Case 2 together, we conclude that there exists a bounded subsequence of $\{v_j\}_j$ in $H_1^p(M)$,
still denoted by $\{v_j\}_j$. This completes the first step.

We now prove that $v_j\rightarrow v$ strongly in $H_1^p(M)$ for some $v\in H_1^p(M)$.
Since $\{v_j\}_j$ is bounded in $H_1^p(M)$, there exists $v\in H_1^p(M)$ such that, up to subsequences,
\begin{align*}
\begin{split}
&v_j \rightharpoonup v \,\, \text{weakly in } H_1^p(M),\,\,v_j \rightarrow v \,\, \text{strongly in } L^{r}(M) \,\, \text{for all } r\in [1,p^*),\\
&\text{and }v_j(x) \rightarrow v(x) \,\, \text{for almost every } x\in M \,\,\text{as } j \to +\infty.
\end{split}
\end{align*}
From \eqref{e3.41} and the above convergence properties, we obtain
\begin{align*}
\begin{split}
&\int_{M}\left\langle|\nabla_gv_j|_g^{p-2}\nabla_gv_j-|\nabla_gv_i|_g^{p-2}\nabla_gv_i, \nabla_gv_j-\nabla_gv_i\right\rangle_g\,dv_g\\
&=|h|\int_{M}(|v_j|^{p-2}v_j-|v_i|^{p-2}v_i)(v_j-v_i)\,dv_g+\int_{M}f(|v_j|^{q-2}v_j-|v_i|^{q-2}v_i)(v_j-v_i)\,dv_g\\
&\quad\quad\quad+\int_{M}a\left(\frac{v_j}{(v_j^{2}+\varepsilon)^{\frac{q}{2}+1}}-\frac{v_i}{(v_i^{2}+\varepsilon)^{\frac{q}{2}+1}}\right)(v_j-v_i)\,dv_g+o(1)\|v_j-v_i\|\\
&=o(1).
\end{split}
\end{align*}
On the other hand, by Lindqvist's formula \cite[Page 162]{Lind}, for any $X, Y\in \mathbb{R}^n$, there exists a constant $c_p>0$ such that
\begin{equation*}\label{e3.54}
\langle |X|^{p-2}X-|Y|^{p-2}Y, X-Y \rangle \geq \begin{cases}
 c_p|X-Y|^p,\, &  \text{if } p\geq 2, \\
c_p\frac{|X-Y|^2}{(|X|+|Y|)^{2-p}}, &  \text{if } 1<p\leq 2,
\end{cases}
\end{equation*}
where $\langle \cdot, \cdot\rangle$ denotes the standard scalar product in $\mathbb{R}^n$.
Hence, if $p\geq2$, one gets that
\begin{equation}\label{e3.55}
c_p\|\nabla_gv_j-\nabla_gv_i\|_p^p
\leq\int_{M}\left\langle|\nabla_gv_j|_g^{p-2}\nabla_gv_j-|\nabla_gv_i|_g^{p-2}\nabla_gv_i, \nabla_gv_j-\nabla_gv_i\right\rangle_g\,dv_g.
\end{equation}
And if $1<p<2$, we obtain by H\"{o}lder's inequality that
\begin{equation}\label{e3.56}
\begin{split}
c_p\|\nabla_gv_j-\nabla_gv_i\|_p^p&\leq  c_p\left[\int_M\frac{|\nabla_g(v_j-v_i)|_g^2}{(|\nabla_gv_j|_g+|\nabla_gv_i|_g)^{2-p}}\right]^{\frac{p}{2}}
\left[\int_M(|\nabla_gv_j|_g+|\nabla_gv_i|_g)^p\right]^{\frac{2-p}{2}}\\
& \leq  C\left[\int_{M}\left\langle|\nabla_gv_j|_g^{p-2}\nabla_gv_j-|\nabla_gv_i|_g^{p-2}\nabla_gv_i, \nabla_gv_j-\nabla_gv_i\right\rangle_g\,dv_g\right]^{\frac{p}{2}}.
\end{split}
\end{equation}
Combining \eqref{e3.55} and \eqref{e3.56}, we then infer that
\begin{equation*}
\|\nabla_gv_j-\nabla_gv_i\|_p\rightarrow 0 \quad \text{as } i, j\rightarrow +\infty.
\end{equation*}
This shows that $\{v_j\}_j$ is a Cauchy sequence which, together with the completeness of $H_1^p(M)$, proves that
$\|v_j-v\|\rightarrow 0$ as $j\rightarrow +\infty$.
We then get the assertion.
\end{proof}

\section{Proof of Theorem \ref{theorem1}}

In this section, we prove Theorem \ref{theorem1}. This can be done through three steps.
\subsection{The existence of the first solution}

In this subsection, we obtain the existence of the first solution of \eqref{eq:M}. Notice that, we require \eqref{e4.1} to be hold. This restriction will be
removed by using a scaling argument in the last step.

\begin{proposition}\label{prop4.1}
Let $(M, g)$ be a smooth compact smooth Riemannian manifold without boundary of dimension $n$ $(n \geq 3)$. Let $h<0$ be a constant,
$f$ and $a\geq 0$ be smooth functions on $M$ with $\int_Ma\,dv_g>0$, $\int_Mf\,dv_g<0$, and $\sup_Mf>0$.
We further assume that
\begin{equation}\label{e4.1}
|h|\leq \frac{\eta_0}{p^*}\int_{M}|f^-|dv_g
\end{equation}
and
\begin{equation} \label{e4.2}
\int_{M}a\,dv_g< \frac{p}{2(n-p)}\Big(\frac{2n-p}{2(n-p)}\Big)^{\frac{2n}{p}-1}\Big(\frac{|h|}{\int_{M}|f^-|dv_g}\Big)^{\frac{2n}{p}}\int_M|f^-|\,dv_g
\end{equation}
hold, where $\eta_0$ is as in Proposition \ref{prop3.2}. Then there exists a number $\mathcal{C}_1$ given by \eqref{e4.5} below such that if
\begin{equation} \label{e4.3}
\sup_Mf\left(\int_{M}|f^-|\,dv_g\right)^{-1}\leq \mathcal{C}_1,
\end{equation}
then Eq. \eqref{eq:M} possesses at least two positive solutions.
\end{proposition}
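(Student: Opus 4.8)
The plan is to solve, for each fixed pair $(q,\varepsilon)$ with $q\in(q_{\eta_0},p^*)$ close to $p^*$ and $\varepsilon>0$ small, the subcritical regularized problem \eqref{eq:M2} by producing \emph{two} positive critical points of $\mathcal{I}_q^{\varepsilon}$, and then to let $\varepsilon\searrow0$ (for fixed $q$) followed by $q\nearrow p^*$ so as to recover two distinct positive solutions of \eqref{eq:M}. The whole scheme rests on the mountain-pass landscape of $\mu_{k,q}^{\varepsilon}$ recorded in Lemma \ref{asymptotic1}, the strictly positive barrier supplied by Proposition \ref{prop3.2} (which holds precisely under the normalizations \eqref{e3.21}--\eqref{e3.23}), and the compactness furnished by Proposition \ref{prop3.3}. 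The auxiliary hypothesis \eqref{e4.1} streamlines the uniform estimates below and is removed at the very end by a scaling argument.

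First I would extract the two subcritical solutions. By Proposition \ref{prop3.2} there is an interval $I_q=[k_{1,q},k_{2,q}]$ on which every $u\in\mathcal{B}_{k,q}$ satisfies $\mathcal{I}_q^{\varepsilon}(u)\geq\tfrac12 m k^{p/q}>0$, whereas by Lemma \ref{asymptotic1}(iii) one has $\mu_{k_0,q}^{\varepsilon}\leq0$ at some $k_0<k_{1,q}$, and by Lemma \ref{asymptotic1}(v) one has $\mu_{k,q}^{\varepsilon}<0$ for all $k$ beyond the barrier. Minimizing $\mathcal{I}_q^{\varepsilon}$ over the ball $\{\|u\|_q^q\leq k_{1,q}\}$, whose interior is open in $H_1^p(M)$ since $q<p^*$, gives a value $\leq\mu_{k_0,q}^{\varepsilon}\leq0$, strictly below the boundary value $\mu_{k_{1,q},q}^{\varepsilon}>0$ (the infimum cannot escape to $k\to0^+$ by Lemma \ref{asymptotic1}(i)); hence the minimum is interior and, by the compactness and maximum-principle argument of Lemma \ref{lemma 3.2}, yields a positive first critical point $u_{1}^{q,\varepsilon}$ with $\mathcal{I}_q^{\varepsilon}(u_{1}^{q,\varepsilon})\leq0$. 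Taking $u_{1}^{q,\varepsilon}$ and a point on $\mathcal{B}_{k,q}$ with $k$ beyond the barrier (negative energy) as the two valleys, every connecting path must cross some sphere $\mathcal{B}_{k,q}$ with $k\in I_q$, so the mountain-pass level obeys $c_{q,\varepsilon}\geq\min_{k\in I_q}\tfrac12 m k^{p/q}>0$; Proposition \ref{prop3.3} then lets the Mountain Pass Theorem produce a second positive critical point $u_{2}^{q,\varepsilon}$ with $\mathcal{I}_q^{\varepsilon}(u_{2}^{q,\varepsilon})=c_{q,\varepsilon}>0\geq\mathcal{I}_q^{\varepsilon}(u_{1}^{q,\varepsilon})$, so the two solutions are distinct.

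Next I would pass to the limit. The energy of $u_{1}^{q,\varepsilon}$ is bounded above, uniformly in $q$ and $\varepsilon$, by the constant $\mu$ of Lemma \ref{asymptotic1}(iv) together with Remark \ref{rm3.4}; a uniform upper bound for $c_{q,\varepsilon}$ comes from estimating $\mathcal{I}_q^{\varepsilon}$ along the segment joining the two valleys, using \eqref{e4.2} to control the singular term. These energy bounds, combined with the coercivity structure of $\mathcal{I}_q^{\varepsilon}$, give $H_1^p$-bounds on $u_i^{q,\varepsilon}$ uniform in $(q,\varepsilon)$, while Lemma \ref{lemma 2.2} provides a uniform positive lower bound $\min_M u_i^{q,\varepsilon}\geq c_0>0$ that keeps the singular nonlinearity $a(u^2+\varepsilon)^{-q/2-1}$ under control as $\varepsilon\searrow0$. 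Sending $\varepsilon\searrow0$ first identifies, for each fixed $q<p^*$, a positive solution of the degenerate problem ($\varepsilon=0$), and then $q\nearrow p^*$ yields limits $u_1,u_2$ solving \eqref{eq:M}; the uniform lower bound guarantees $u_i^{-1}\in L^r(M)$ for all $r\geq1$, so Lemma \ref{lemma 2.1}(ii) delivers the $C^{1,\alpha}$ regularity. The number $\mathcal{C}_1$ in \eqref{e4.3} is chosen by refining $\mathcal{C}_q$ of \eqref{e3.22} so that the barrier height, hence $c_{q,\varepsilon}$, stays strictly below the non-compactness threshold governed by $K(n,p)$ and $\sup_M f$.

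The hard part will be the critical limit $q\nearrow p^*$: because the reaction term reaches critical Sobolev growth, one must exclude concentration and the attendant loss of compactness, and this is exactly where the smallness condition \eqref{e4.3} on $\sup_M f\big(\int_M|f^-|\,dv_g\big)^{-1}$ enters, forcing the mountain-pass energy below the critical level so that Sobolev compactness is retained across the limit. A secondary difficulty is guaranteeing that $u_2$ does not collapse onto $u_1$ as $q\nearrow p^*$; here the strict gap $\mathcal{I}_q^{\varepsilon}(u_2^{q,\varepsilon})>0\geq\mathcal{I}_q^{\varepsilon}(u_1^{q,\varepsilon})$ must be shown to persist in the limit, which I expect to follow from the uniform lower bound of Lemma \ref{lemma 2.2} and the uniform positivity of the barrier in Proposition \ref{prop3.2}.
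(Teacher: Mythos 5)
Your construction of the two subcritical critical points is essentially the paper's: minimization in the region lying below the barrier of Proposition \ref{prop3.2}, a mountain pass across the interval $I_q$ using the Palais--Smale property of Proposition \ref{prop3.3}, the limit $\varepsilon\searrow 0$ at fixed $q$ controlled by the uniform lower bound of Lemma \ref{lemma 2.2}, and the final scaling to dispose of \eqref{e4.1}. The genuine gap is in how you treat the limit $q\nearrow p^*$. You propose to exclude concentration by forcing the mountain-pass level below a non-compactness threshold governed by $K(n,p)$, and you assign this role to the smallness condition \eqref{e4.3}. Neither point matches how the proof can actually be run, and you supply no mechanism (no test-function computation, no threshold estimate) for such an argument. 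In the paper, \eqref{e4.3} enters only through \eqref{e4.5}: it guarantees $\mathcal{C}_q\geq\mathcal{C}_1$ uniformly in $q$ near $p^*$, i.e., hypothesis \eqref{e3.23} of Proposition \ref{prop3.2}, so that the positive barrier exists for every such $q$. The critical limit itself needs \emph{no} compactness threshold: from the uniform bound $\|u_{i,q}\|\leq\Lambda$ of \eqref{e4.24} one extracts weak limits; the critical term passes to the limit because $(u_{i,q})^{q-1}\rightharpoonup (u_i)^{p^*-1}$ weakly in $L^{\frac{p^*}{p^*-1}}(M)$ by \cite[Theorem 3.45]{Aubin1}; the singular term converges by dominated convergence thanks to the $q$- and $\varepsilon$-independent lower bound of Lemma \ref{lemma 2.2}; and the weak limit $\Sigma_i$ of $|\nabla_g u_{i,q}|_g^{p-2}\nabla_g u_{i,q}$ is identified as $|\nabla_g u_i|_g^{p-2}\nabla_g u_i$ by the argument of Demengel--Hebey \cite{Demengel98}. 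The same lower bound makes the limits strictly positive, so nontriviality is automatic and no concentration analysis is needed for existence.

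The second gap is distinctness. You expect the gap $\mathcal{I}_q^{\varepsilon}(u_2^{q,\varepsilon})>0\geq\mathcal{I}_q^{\varepsilon}(u_1^{q,\varepsilon})$ to persist in the limit as a consequence of Lemma \ref{lemma 2.2} and the barrier, but this does not follow: at critical growth, weak $H_1^p$ convergence does not give $\mathcal{I}_q^{0}(u_{i,q})\to\mathcal{I}_{p^*}^{0}(u_i)$, the problematic term being $\int_M f(u_{i,q})^q\,dv_g$. The paper devotes Propositions \ref{prop4.2} and \ref{prop4.3} to exactly this point: testing the equation with $(u_{i,q})^{1+p\delta}$ yields, under the \emph{additional} smallness condition $\sup_Mf<\mathcal{C}_2$ of \eqref{e4.39} (a constant that appears nowhere in your outline), a uniform $L^{p^*(1+\delta)}$ bound, whence $\int_M f(u_{i,q})^q\,dv_g\to\int_M f(u_i)^{p^*}\,dv_g$ and strong convergence of the gradients; only then, combined with the uniform strictly negative bound \eqref{e3.6.2} on the low energy, does one conclude $\mathcal{I}_{p^*}^{0}(u_1)<0<\mathcal{I}_{p^*}^{0}(u_2)$ and hence $u_1\neq u_2$ (Proposition \ref{prop4.4}; note the paper itself completes distinctness only there, folding $\mathcal{C}_2$ into the constant $\mathcal{C}$ of Theorem \ref{theorem1}). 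Without a substitute for this Moser-type step your two limits could coincide. A smaller omission: at fixed $\varepsilon$ you must also rule out that the mountain-pass critical point vanishes identically; the paper does this by observing that $\mathcal{I}_q^{\varepsilon}(0)=q^{-1}\varepsilon^{-q/2}\int_M a\,dv_g$ exceeds the uniform bound $\mu$ of Lemma \ref{asymptotic1}(iv) once $\varepsilon$ is small.
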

\begin{proof} We divide the proof into several claims for the sake of clarity.

\begin{claim}\label{claim1}
\textit{There exists $q_0\in (p^{\flat}, p^*)$ such that for all $q\in (q_0, p^*)$ and $\varepsilon>0$ small enough,
there will be $k_0$ and $k_{*}$ with the following properties: $k_0>k_*$ and $\mu_{k_0, q}^{\varepsilon}\leq 0$ while $\mu_{k_*, q}^{\varepsilon}> 0$.}
\end{claim}

\noindent\textup{\textbf{Proof of Claim \ref{claim1}}}.
By Remark \ref{rm3.3} and \eqref{e4.2} there is some $q_0\in (p^{\flat}, p^*)$ such that the condition \eqref{e3.5} holds for all $q\in (q_0, p^*)$.
Hence, by Lemma \ref{asymptotic1} (iii), there exists $k_0>0$ such that $\mu_{k_0, q}^{\varepsilon}\leq 0$. Notice that
$p^{\flat}>p$ for any $n\geq 3$. From Lemma \ref{asymptotic1} (i), we can choose $k_*<\min\{k_0, 1\}$ such that $\mu_{k_*, q}^{\varepsilon}> 0$ for any $\varepsilon\leq k_*$.
Thus Claim 1 is proved.

\begin{claim}\label{claim2} \textit{ The following subcritical equation
\begin{equation}\label{e4.4}
\Delta_{p,g}u+hu^{p-1}=f(x)u^{q-1}+{a(x)}{u^{-q-1}},
\end{equation}
where $q\in (p^{\flat}, p^*)$, admits two positive solutions, say $u_{1,q}$ and $u_{2,q}$. Note that Eq. \eqref{e4.4}
corresponds to the case $\varepsilon=0$ in Eq. \eqref{eq:M2}.}
\end{claim}

\noindent\textup{\textbf{Proof of Claim \ref{claim2}.}}
By Proposition \ref{prop3.2}, we have $\eta_0$ and its corresponding $q_{\eta_0}\in (p^{\flat}, p^*)$ such that
$\delta=\frac{\lambda_{f, \eta_0, q}+h}{p}>\frac{1}{2p}(\lambda_{f}+h)$ for any $q\in (q_{\eta_0}, p^*)$.
By Lemma \ref{lemma3.4}, we have that $\frac{1}{2p}(\lambda_{f}+h)<\delta\leq \frac{1}{p}(\lambda_{f}+h)$.
It then follows immediately from this and \eqref{e3.21} that $\mathcal{C}_q\geq \mathcal{C}_1$, where
\begin{equation}\label{e4.5}
\mathcal{C}_1=\frac{\eta_0}{4|h|}\min\left\{\frac{\lambda_f+h}{2p\left[A+\left(K(n,p)^p+1\right)\lambda_f\right]}, \frac{(p-1)|h|}{p}\right\}
\end{equation}
and $\mathcal{C}_q$  is as in \eqref{e3.22}. Thus the condition \eqref{e3.23} holds according to \eqref{e4.3}. Note that $\mathcal{C}_1$ is independent of $q$ and thus never vanishing for any $q\in (q_{\eta_0}, p^*)$.
Using Proposition \ref{prop3.2} again, there exists an interval $I_q=[k_{1,q}, k_{2,q}]$ such that $\mu_{k,q}^{\varepsilon}>0$
for any $k\in I_q$. We then apply Lemma \ref{asymptotic1} (iii) and we conclude that $k_*<k_0<k_{1,q}$, where $k_*$ is given as in Claim \ref{claim1}.
Observe that
\begin{equation*}
\lim_{q\rightarrow p^*}k_{1,q}=\left(\frac{|h|p^*}{\eta_0\int_M|f^-|\, dv_g}\right)^{\frac{n}{p}}:=l \quad\text{and}
\quad \lim_{q\rightarrow p^*}k_{2,q}=2^{\frac{n}{p}}l.
\end{equation*}

With these information in hand, we proceed the proof of Claim 2 in four steps.
\begin{spacing}{1.5}
\noindent\textbf{Step 1.} \textit{The existence of solution $u_{1,q}^{\varepsilon}$ with strictly negative energy $\mathcal{E}_{1,q}^{\varepsilon}$ for \eqref{eq:M2}}.
\end{spacing}
We define
\begin{equation*}
\mathcal{E}_{1,q}^{\varepsilon}=\inf_{u\in \mathcal{D}_{k,q}}\mathcal{I}_q^{\varepsilon}(u),
\end{equation*}
where $\mathcal{D}_{k,q}=\{u\in H_1^p(M): \|u\|_q^q=k \,\,\text{and } k_*\leq k\leq k_{1,q}\}$.
Due to \eqref{e4.1}, one can check that $k_{1,q}$ is strictly monotone increasing with respect to $q$,
and thus $\|u\|_q^q<l$ for all $u\in \mathcal{D}_{k,q}$. It follows from Section \ref{sec3} that
$\mathcal{E}_{1,q}^{\varepsilon}$ is finite and non-positive.
Then we have, with the same argument as in Lemma \ref{lemma 3.2}, $\mathcal{E}_{1,q}^{\varepsilon}$ is achieved by
some positive function $u_{1,q}^{\varepsilon}$. In particular, $\mathcal{E}_{1,q}^{\varepsilon}$ is the energy
of $u_{1,q}^{\varepsilon}$. From Remark \ref{rm3.4} we know that the energy $\mathcal{E}_{1,q}^{\varepsilon}$ is strictly negative.
Obviously, $u_{1,q}^{\varepsilon}$ is a solution of \eqref{eq:M2}.
By the Ekeland Variational Principle and the Palais-Smale compact condition, there exists a bounded minimizing sequence in $H_1^p(M)$ for
$\mathcal{E}_{1,q}^{\varepsilon}$. Now the lower semi-continuity of $H_1^p$-norm implies that $\|u_{1,q}^{\varepsilon}\|$ is bounded with
the bound independent of $q$ and $\varepsilon$. If we set $\|u_{1,q}^{\varepsilon}\|_q^q=k_1^{\varepsilon}$
we then also have $k_1^{\varepsilon}\in (k_*,k_{1,q})$.
\begin{spacing}{1.5}
\noindent\textbf{Step 2.} \textit{The existence of solution $u_{1,q}$ with strictly negative energy $\mu_{k_1,q}$ for \eqref{e4.4}}.
\end{spacing}
Let $\{\varepsilon_j\}_j$ be a sequence of positive numbers such that $\varepsilon_j\rightarrow0$
as $j\rightarrow +\infty$. For each $j$, let $u_{1,q}^{\varepsilon_j}$ be a positive function satisfying
the following subcritical equation
\begin{equation}\label{e4.8}
\Delta_{p,g}u_{1,q}^{\varepsilon_j}+h(u_{1,q}^{\varepsilon_j})^{p-1}=f(x)(u_{1,q}^{\varepsilon_j})^{q-1}
+\frac{a(x)u_{1,q}^{\varepsilon_j}}{\big((u_{1,q}^{\varepsilon_j})^{2}+\varepsilon_j\big)^{\frac{q}{2}+1}}
\end{equation}
in $M$.
Since the sequence $\{u_{1,q}^{\varepsilon_j}\}_j$ is bounded in $H_1^p(M)$, up to a subsequence, we can assume
\begin{equation}\label{e4.9}
\begin{cases}
\,u_{1,q}^{\varepsilon_j} \rightharpoonup u_{1,q}\quad  \text{weakly in }H_1^p(M), \\
\,u_{1,q}^{\varepsilon_j} \rightarrow  u_{1,q}\quad \text{strongly in } L^{r}(M) \,\, \text{for all } r\in [1,p^*),\\
\,u_{1,q}^{\varepsilon_j} \rightarrow u_{1,q} \quad \text{for almost every } x\in M,
\end{cases}
\end{equation}
for some $u_{1,q}\in H_1^p(M)$ as $j\rightarrow +\infty$.
Moreover, as $\{|\nabla_g u_{1,q}^{\varepsilon_j}|_g\}_j$ is bounded in $L^p(M)$, one can also assume that
\begin{equation*}
|\nabla_g u_{1,q}^{\varepsilon_j}|_g^{p-2}\nabla_g u_{1,q}^{\varepsilon_j}\rightharpoonup \Sigma_{1,q}\,\, \text{weakly in } (L^p(M))^{\prime}.
\end{equation*}
By Lemma \ref{lemma 2.2} and Lebesgue's dominated convergence theorem, we have that
$\int_M(u_{1,q})^{-r}\,dv_g$ is finite for all $r$. Taking $j\rightarrow +\infty$ in \eqref{e4.8}, one then
gets that
\begin{equation*}\label{e4.10}
-\operatorname{div}_g(\Sigma_{1,q})+h(u_{1,q})^{p-1}=f(x)(u_{1,q})^{q-1}+a(x)(u_{1,q})^{-q-1}.
\end{equation*}
Since
\begin{equation*}
-h(u_{1,q})^{p-1}+f(x)(u_{1,q})^{q-1}+a(x)(u_{1,q})^{-q-1}
\end{equation*}
is bounded in $L^1(M)$, one can prove that $\Sigma_{1,q}=|\nabla_g u_{1,q}|_g^{p-2}\nabla_g u_{1,q}$, as in Demengel-Hebey \cite{Demengel98} (see also Chen-Liu \cite{ChenLiu}). Hence, $u_{1,q}$ is a weak solution of Eq. \eqref{e4.4}.
By Lemma \ref{lemma 2.1}, $u_{1,q}\in C^{1,\alpha}(M)$ for some $0<\alpha<1$.
Let $\|u_{1,q}\|_q^q=k_1$, by \eqref{e4.9}, we still have $k_1\in(k_*,k_{1,q})$. Consequently, it holds $u_{1,q}\not\equiv0$.
By Lemma \ref{lemma 2.1} and strong maximum principle, we get $u_{1,q}>0$ in $M$.
Since $u_{1,q}^{\varepsilon_j}$ has strictly negative energy
$\mathcal{E}_{1,q}^{\varepsilon_j}$, passing to the limit as $j\rightarrow +\infty$, we have that
$u_{1,q}$ also has strictly negative energy, which we denoted by $\mu_{k_1,q}$. Thus, we already show that $u_{1,q}$ is a positive solution of
\eqref{e4.4} as claimed.

\begin{spacing}{1.5}
\noindent\textbf{Step 3.} \textit{The existence of solution $u_{2,q}^{\varepsilon}$ with strictly positive energy $\mathcal{E}_{2,q}^{\varepsilon}$ for \eqref{eq:M2}}.
\end{spacing}

Let $k^*$ be a real number such that
\begin{equation*}\label{e4.11}
\mu_{k^{*},q}^{\varepsilon}=\max\{\mu_{k,q}^{\varepsilon}: k_{1,q}\leq k\leq k_{2,q}\}.
\end{equation*}
From Proposition \ref{prop3.2}, we have $\mu_{k^{*},q}^{\varepsilon}>0$. According to Proposition \ref{prop3.1}, one can choose $\bar{k}_1\in(k_0,k_{1,q})$ and $\bar{k}_2\in(k_{2,q},k_{**})$ such that $\mu_{\bar{k}_1,q}^{\varepsilon}=\mu_{\bar{k}_2,q}^{\varepsilon}=0$. We have proved that
$\mu_{\bar{k}_1,q}^{\varepsilon}$ and $\mu_{\bar{k}_2,q}^{\varepsilon}$ can be achieved, say by
$u_{\bar{k}_1,q}$ and $u_{\bar{k}_2,q}$ respectively. We now set
\begin{equation*}\label{e4.12}
\Gamma=\{\gamma\in C([0,1];H_1^p(M)):\gamma(0)=u_{\bar{k}_1,q}, \gamma(1)=u_{\bar{k}_2,q}\}.
\end{equation*}
Consider the functional $E(v)=\mathcal{I}_q^{\varepsilon}(u_{\bar{k}_1,q}+v)$ for any non-negative real valued function $v$
with
\newcommand{\normmm}[1]{{\left\vert\kern-0.25ex\left\vert\kern-0.25ex\left\vert #1
    \right\vert\kern-0.25ex\right\vert\kern-0.25ex\right\vert}}
\begin{equation*}\label{e4.14}
\normmm v=\left(\int_M\big|u_{\bar{k}_1,q}+v\big|^q\,dv_g\right)^{\frac{1}{q}}.
\end{equation*}
Clearly, we have $E(0)=0$. Denote $\rho=(k^*)^{{1}/{q}}$. If $\normmm v=\rho$, by setting $u=u_{\bar{k}_1,q}+v$, then $\|u\|_q^q=k^*$.
Therefore
\begin{equation*}\label{e4.15}
E(v)=\mathcal{I}_q^{\varepsilon}(u)\geq \mu_{k^{*},q}^{\varepsilon}>0.
\end{equation*}
Next we set $v_1=u_{\bar{k}_2,q}-u_{\bar{k}_1,q}$, and then obtain
$$
E(v_1)=\mathcal{I}_q^{\varepsilon}(u_{\bar{k}_2,q})=0\,\,\, \text{and}\,\,\,
\normmm {v_1}=\left(\int_M\big|u_{\bar{k}_2,q}\big|^q\,dv_g\right)^{\frac{1}{q}}=(\bar{k}_2)^{\frac{1}{q}}>\rho.
$$
Notice that the functional $E$ satisfies the Palais-Smale condition as we have shown for $\mathcal{I}_q^{\varepsilon}$.
Thus, by Theorem 6.1 in \cite[Chapter II]{MS}, the following mountain pass level
$$
\mathcal{E}_{2,q}^{\varepsilon}:= \inf_{\gamma \in \Gamma} \sup_{t \in [0,1]} E(\gamma(t)-u_{\bar{k}_1,q})
$$
is a critical value of the functional $E$.  It is then obvious that $\mathcal{E}_{2,q}^{\varepsilon}>0$.
Thus, there exists a Palais-Smale sequence $\{u_j\}_j\subset H_1^p(M)$ for $\mathcal{I}_q^{\varepsilon}$ at the
level $\mathcal{E}_{2,q}^{\varepsilon}$. Since $\mathcal{I}_q^{\varepsilon}(u_j)=\mathcal{I}_q^{\varepsilon}(|u_j|)$ for every $j$, we
can assume $u_j\geq 0$ for any $j$. Consequently, Proposition \ref{prop3.3} implies that, up to subsequences,
$u_j\rightarrow u_{2,q}^{\varepsilon}$ strongly in $H_1^p(M)$ for some $u_{2,q}^{\varepsilon}\in H_1^p(M)$ as $j\rightarrow +\infty$.
Therefore, the function $u_{2,q}^{\varepsilon}$ with positive energy $\mathcal{E}_{2,q}^{\varepsilon}$ satisfies
the following subcritical equation
\begin{equation}\label{e4.16}
\Delta_{p,g}u_{2,q}^{\varepsilon}+h(x)(u_{2,q}^{\varepsilon})^{p-1}=f(x)(u_{2,q}^{\varepsilon})^{q-1}
+\frac{a(x)u_{2,q}^{\varepsilon}}{\big((u_{2,q}^{\varepsilon})^{2}+\varepsilon\big)^{\frac{q}{2}+1}}
\end{equation}
in the weak sense.
The non-negativity of $\{u_j\}_j$ implies that $u_{2,q}^{\varepsilon}\geq 0$ almost everywhere, and thus the regularity result, i.e., Lemma \ref{lemma 2.1}(i)
can be applied to \eqref{e4.16}. It follows that  $u_{2,q}^{\varepsilon}\in C^{1, \alpha}(M)$ for some $\alpha \in (0, 1)$, which
also implies $u_{2,q}^{\varepsilon}\geq 0$ in $M$.
 We claim that $u_{2,q}^{\varepsilon}\not\equiv 0$. Indeed, by Lemma \ref{asymptotic1} (iv) we have that $\mathcal{E}_{2,q}^{\varepsilon}\leq \mu<\infty$.
If $u_{2,q}^{\varepsilon}\equiv 0$, then we have
\begin{equation*}
\frac{1}{q\varepsilon^{\frac{q}{2}}}\int_Ma\,dv_g=\mathcal{E}_{2,q}^{\varepsilon}\leq \mu<\infty,
\end{equation*}
which is impossible if $\varepsilon\rightarrow 0$. Thus, $u_{2,q}^{\varepsilon}>0$ on $M$ if $\varepsilon$ is sufficiently small which
we will always assume from now on. Denote $\|u_{2,q}^{\varepsilon}\|_q^q=k_2^{\varepsilon}$. In view of Lemma \ref{asymptotic1} (v), we know that $k_2^{\varepsilon}>0$ is bounded from above by $k_{**}$
independent of both $\varepsilon$ and $q$.
\begin{spacing}{1.5}
\noindent\textbf{Step 4.} \textit{The existence of solution $u_{2,q}$ with strictly positive energy $\mu_{k_2,q}$ for \eqref{e4.4}}.
\end{spacing}
Let $\{\varepsilon_j\}_j$ be a sequence of positive numbers such that $\varepsilon_j\rightarrow0$
as $j\rightarrow +\infty$. For each $j$, let $u_{2,q}^{\varepsilon_j}$ be a positive function in $M$ satisfies
the following subcritical equation
\begin{equation}\label{e4.18}
\Delta_{p,g}u_{2,q}^{\varepsilon_j}+h(x)(u_{2,q}^{\varepsilon_j})^{p-1}=f(x)(u_{2,q}^{\varepsilon_j})^{q-1}
+\frac{a(x)u_{2,q}^{\varepsilon_j}}{\big((u_{2,q}^{\varepsilon_j})^{2}+\varepsilon_j\big)^{\frac{q}{2}+1}}
\end{equation}
in $M$. By the discussion above, the sequence $\{u_{2,q}^{\varepsilon_j}\}_j$ is bounded in $H_1^p(M)$. Consequently, there exists $u_{2,q}\in H_1^p(M)$ such that up to a subsequence,
\begin{align*}
\begin{split}
&u_{2,q}^{\varepsilon_j} \rightharpoonup u_{2,q} \,\, \text{weakly in } H_1^p(M),\,\,u_{2,q}^{\varepsilon_j} \rightarrow  u_{2,q} \,\, \text{strongly in } L^{r}(M) \,\, \text{for all } r\in [1,p^*),\\
&\text{and }u_{2,q}^{\varepsilon_j}(x) \rightarrow  u_{2,q}(x) \,\, \text{for almost every } x\in M\,\, \text{as} \,\,j \to +\infty.
\end{split}
\end{align*}
Thus, $u_{2,q}\geq 0$ almost everywhere in $M$. We now denote $\|u_{2,q}\|_q^q=k_2$. By Lemma \ref{lemma 2.2}, the sequence
$\{u_{2,q}^{\varepsilon_j}\}_j$ is uniformly bounded from below. Applying Lebesgue's dominated convergence theorem, we have that $(u_{2,q})^{-1}\in L^r(M)$ for any $r>0$.
Taking $j\rightarrow +\infty$ in \eqref{e4.18} and deducing as in Step 2, we get that
$u_{2,q}$ is the second weak solution of the following subcritical equation
\begin{equation*}
\Delta_{p,g}u_{2,q}+h(x)(u_{2,q})^{p-1}=f(x)(u_{2,q})^{q-1}
+{a(x)}{(u_{2,q})^{-q-1}}.
\end{equation*}
It follows from Lemma \ref{lemma 2.1}(ii) that $u_{2,q}\in C^{1,\alpha}(M)$ for some $\alpha\in (0,1)$, and thus $u_{2,q}>0$ in $M$.
Since $u_{2,q}^{\varepsilon_j}$ has positive energy $\mathcal{E}_{2,q}^{\varepsilon_j}$,
taking the limit as $j\rightarrow +\infty$, we know that the energy of $u_{2,q}$ is still positive, i.e., $\mu_{k_2,q}>0$. Moreover, according to Step 2, it holds $u_{1,q}\not\equiv u_{2,q}$.
Note that $k_2$ is still bounded from above by $k_{**}$ independent of both $\varepsilon$ and $q$. This completes the proof of Claim 2.

\begin{claim}\label{claim3}
\textit{Eq. \eqref{eq:M} has at least one positive solution.}
\end{claim}
\noindent\textup{\textbf{Proof of Claim \ref{claim3}.}}
By Step 2 and Step 4 in Claim \ref{claim2} above, we know that there exist two positive functions $u_{1,q}$ and $u_{2,q}$ which solve \eqref{e4.4}.
Moreover, $\|u_{i,q}\|_q^q=k_i\,(i=1,2)$. We now estimate $\mu_{k_1,q}$ and $\mu_{k_2,q}$.
Recall that $\mu_{k_i,q}$ are the energy of $u_{i,q}$ found in Claim \ref{claim2}, i.e.,
\begin{equation*}\label{e4.20}
\mu_{k_i,q}=\frac{1}{p}\|\nabla_{g} u_{i,q}\|_p^{p}+\frac{h}{p}\|u_{i,q}\|_p^p
-\frac{1}{q}\int_{M}f(u_{i,q})^q\,dv_g+\frac{1}{q}\int_{M}\frac{a}{(u_{i,q})^{q}}\,dv_g.
\end{equation*}
We have already seen that $\mu_{k_1,q}<0<\mu_{k_2,q}\leq \mu$. Since $k_1\in (k_{*},k_{1,q})$ and $h<0$, we obtain
\begin{equation*}
\frac{1}{p}\|\nabla_{g} u_{1,q}\|_p^{p}\leq\mu_{k_1,q}-\frac{h}{p}k_1^{\frac{p}{q}}+\frac{1}{q}\int_{M}f(u_{1,q})^q\,dv_g
\leq\frac{k_1}{q}\sup_Mf-\frac{h}{p}k_1^{\frac{p}{q}},
\end{equation*}
and thus the sequence $\{u_{1,q}\}_q$ is bounded in $H_{1}^{p}(M)$. Similarly, from Lemma \ref{asymptotic1} (iv) and the following
estimate
\begin{equation*}
\frac{1}{p}\|\nabla_{g} u_{2,q}\|_p^{p}\leq\mu_{k_2,q}-\frac{h}{p}k_2^{\frac{p}{q}}+\frac{1}{q}\int_{M}f(u_{2,q})^q\,dv_g
\leq\mu+\frac{k_2}{q}\sup_Mf-\frac{h}{p}k_2^{\frac{p}{q}},
\end{equation*}
we know that the sequence $\{u_{2,q}\}_q$ is also bounded in $H_{1}^{p}(M)$.
Combining these facts, we get that for $i=1, 2$
\begin{equation*}
\|u_{i,q}\|^{p}=\|\nabla_{g}u_{i,q}\|_p^{p}+\|u_{i,q}\|_p^p\leq p\mu+\frac{pk_i}{q}\sup_Mf+(1-h)k_i^{\frac{p}{q}}.
\end{equation*}
Thanks to $k_{**}>1$, $k_i\leq k_{**}$ and $q>p^{\flat}$, if we denote
\begin{equation}\label{e4.24}
\Lambda=\Big[p\mu+(\sup_Mf)k_{**}+(1+|h|)k_{**}^{\frac{p}{p^{\flat}}}\Big]^{\frac{1}{p}},
\end{equation}
we then get that $\|u_{i,q}\|\leq \Lambda$ for $i=1, 2$. Thus, as usual, up to subsequences, there exists $u_i\in H_1^p(M)$  and $\Sigma_{i}\in L^{\frac{p}{p-1}}(M)$ such that,
as $q$ approaches to $p^*$,
\begin{equation*}
\begin{cases}
\,u_{i,q} \rightharpoonup u_{i}\quad  \text{weakly in }H_1^p(M), \\
\,u_{i,q} \rightarrow  u_{i}\quad \text{strongly in } L^{r}(M) \,\, \text{for all } r\in [1,p^*),\\
\,u_{i,q} \rightarrow u_{i} \quad \text{for almost every } x\in M,\\
\,|\nabla_g u_{i,q}|_g^{p-2}\nabla_g u_{i,q}\rightharpoonup \Sigma_{i}\,\, \text{weakly in } (L^p(M))^{\prime}.
\end{cases}
\end{equation*}
By Lemma \ref{lemma 2.2} and Lebesgue's dominated convergence theorem, it holds
\begin{equation*}
\lim_{q\rightarrow p^*}\int_M\frac{a(x)v}{(u_{i,q})^{q+1}}\,dv_g=\int_M\frac{a(x)v}{(u_{i})^{p^*+1}}\,dv_g\,\, \text{ for all }  v\in H_1^p(M).
\end{equation*}
Moreover, by \cite[Theorem 3.45]{Aubin1}, one has
\begin{equation}\label{e4.26}
(u_{i,q})^{q-1} \rightharpoonup (u_{i})^{p^*-1}\,\,  \text{weakly in }L^{\frac{p^*}{p^*-1}}(M)\,\, \text{as }  q\rightarrow p^*,
\end{equation}
since $\{(u_{i,q})^{q-1}\}_q$ is bounded in $L^{\frac{p^*}{q-1}}(M)\subset L^{\frac{p^*}{p^*-1}}(M)$ and $u_{i,q}\rightarrow u_i$ almost everywhere in $M$. We thus have, by Sobolev inequality and
the fact that $\big(L^{\frac{p^*}{p^*-1}}(M)\big)'=L^{p^*}(M)$,
\begin{equation*}
\lim_{q\rightarrow p^*}\int_M{f}{(u_{i,q})^{q-1}}v\,dv_g=\int_M{f}{(u_{i})^{p^*-1}}v\,dv_g \,\, \text{ for all }  v\in H_1^p(M).
\end{equation*}
Since $u_{i,q}$ satisfies the following identity
\begin{align}\label{e4.28}
\begin{split}
&\int_{M}|\nabla_gu_{i,q}|_g^{p-2}\left\langle\nabla_gu_{i,q}, \nabla_gv\right\rangle_g\,dv_g
+h\int_M(u_{i,q})^{p-1}v\,dv_g\\
&\quad\quad\quad\quad\quad\quad-\int_M{f}{(u_{i,q})^{q-1}}v\,dv_g-\int_M\frac{a(x)v}{(u_{i,q})^{q+1}}\,dv_g=0,
\end{split}
\end{align}
for all $ v\in H_1^p(M)$, passing to the limit for $q\rightarrow p^*$ in \eqref{e4.28}, we obtain for $i=1,2$ that
\begin{align*}
\begin{split}
&\int_{M}\left\langle \Sigma_i, \nabla_gv\right\rangle_g\,dv_g
+h\int_M(u_{i})^{p-1}v\,dv_g\\
&\quad\quad\quad\quad\quad\quad-\int_M{f}{(u_{i})^{p^*-1}}v\,dv_g-\int_M\frac{a(x)v}{(u_{i})^{p^*+1}}\,dv_g=0 \,\, \text{ for all }v\in H_1^p(M).
\end{split}
\end{align*}
With the similar argument as in Claim 2, we have $\Sigma_i=|\nabla_gu_{i}|_g^{p-2}\nabla_gu_{i}$.
This shows that $u_i$ ($i=1,2$) are weak solutions to \eqref{eq:M}.
By the strong maximum principle and Lemma \ref{lemma 2.1}(ii), one then gets that $u_i$ is positive and $u_i\in C^{1,\alpha}(M)$ for some $\alpha\in (0,1)$.
\end{proof}

\subsection{The existence of the second solution}

In the previous subsection, we only show that $u_i$ $(i=1,2)$ are solutions of \eqref{eq:M}. However, it is not clear whether
these functions are distinct. In this subsection, we will show that $u_i$ $(i=1,2)$ are in fact different provided $\sup_Mf$
is sufficiently small. Recall that the energies of $u_i$ are given as follows
\begin{equation*}
{\mathcal{I}}_{p^*}^{0}(u_i)=\frac{1}{p}\int_{M}|\nabla_{g} u_i|_g^{p}\,dv_g+\frac{h}{p}\int_{M}(u_i)^p\,dv_g
-\frac{1}{p^*}\int_{M}f(u_i)^{p^*}\,dv_g+\frac{1}{p^*}\int_{M}\frac{a}{(u_i)^{p^*}}\,dv_g.
\end{equation*}
As in \cite{Ngo1}, we need compare $\mathcal{I}_{p^*}^0(u_1)$ and $\mathcal{I}_{p^*}^0(u_2)$, and this could be done
once we can show that
$$
\lim_{q\rightarrow p^*}{\mathcal{I}}_{q}^{0}(u_{i,q})=\mathcal{I}_{p^*}^0(u_i)\,\, \text{ for } i=1, 2.
$$
From the expression of ${\mathcal{I}}_{q}^{0}(u_{i,q})$, we know that the only difficult part is to show that
\begin{equation}\label{e4.30}
\int_Mf(u_{i,q})^q\,dv_g\rightarrow\int_M{f}{(u_{i})^{p^*}}\,dv_g \,\,\, \text{as } \,q\rightarrow p^*.
\end{equation}
To this end, we make $\sup_Mf$ sufficiently small. Intuitively, such a small $f$ is equivalent to saying, for example,
that $f(u_{i,q})^q$ behaves exactly the same as $f(u_{i,q})^{q-1}$.
\begin{proposition}\label{prop4.2}
Assume that all requirements in \textup{Proposition \ref{prop4.1}} are fulfilled and $f$ satisfies
\begin{equation*}
\sup_Mf<\mathcal{C}_2,
\end{equation*}
where the number $\mathcal{C}_2$ is given in {\eqref{e4.39}} below. Then \eqref{e4.30} holds.
\end{proposition}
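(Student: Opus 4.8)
The plan is to show that the only obstruction to \eqref{e4.30} is a possible loss of mass at the critical exponent. By \textup{Proposition \ref{prop4.1}} we have $u_{i,q}\rightharpoonup u_i$ weakly in $H_1^p(M)$, $u_{i,q}\to u_i$ strongly in every $L^r(M)$ with $r<p^*$ and almost everywhere, but strong convergence fails in general at $r=p^*$, so a priori $(u_{i,q})^{q}$ need not converge in $L^1$ to $u_i^{p^*}$. I would rule this out by establishing a uniform higher-integrability bound $\sup_{q}\|u_{i,q}\|_{L^s(M)}<\infty$ for some fixed $s>p^*$; this forces the family $\{(u_{i,q})^{q}\}_q$ to be uniformly integrable, and since $u_{i,q}\to u_i$ almost everywhere while $q\to p^*$ gives $(u_{i,q})^{q}\to u_i^{p^*}$ almost everywhere, Vitali's convergence theorem then yields $(u_{i,q})^{q}\to u_i^{p^*}$ in $L^1(M)$, hence \eqref{e4.30}.

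To produce the higher-integrability bound I would run a single Moser-type step. Fix $\gamma=1+\tau$ with $\tau>0$ small, set $u_L=\min\{u_{i,q},L\}$, and test \eqref{e4.4} with $\varphi=u_{i,q}\,u_L^{\,p(\gamma-1)}\ge 0$ (the truncation guarantees admissibility, and one sends $L\to\infty$ at the end by Fatou). The gradient term reproduces $\frac{p(\gamma-1)+1}{\gamma^{p}}\|\nabla(u_{i,q}^{\gamma})\|_p^p$, with coefficient close to $1$ for small $\tau$. Because $h<0$ and $a\ge0$, the zeroth-order term $|h|\int u_{i,q}^{\,p\gamma}\,dv_g$ and the negative-power term $\int a\,u_{i,q}^{\,p\gamma-p-q}\,dv_g$ are lower order: the former is controlled through $\|u_{i,q}\|\le\Lambda$ (cf. \eqref{e4.24}) since $p\gamma<p^*$, the latter via the uniform lower bound $\min_M u_{i,q}\ge c_0>0$ from \textup{Lemma \ref{lemma 2.2}}. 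The only dangerous contribution is the critical term, which I estimate by H\"older's inequality with exponents $\frac{n}{p}$ and $\frac{p^*}{p}$ together with the identity $(p^*-p)\frac{n}{p}=p^*$, giving
\[
\int_M f\,u_{i,q}^{\,q-1}\varphi\,dv_g\le \sup_M f\;\|u_{i,q}\|_{p^*}^{\,q-p}\;\|u_{i,q}^{\gamma}\|_{p^*}^{p},
\]
after which Sobolev's inequality \eqref{eq2.1} turns $\|u_{i,q}^{\gamma}\|_{p^*}^{p}$ into $(K(n,p)^p+\varepsilon)\|\nabla(u_{i,q}^{\gamma})\|_p^p$ plus a lower-order term.

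The estimate closes precisely when the gradient contribution of the critical term can be absorbed into the left-hand side, i.e. when $\sup_M f\,\|u_{i,q}\|_{p^*}^{\,q-p}(K(n,p)^p+\varepsilon)$ stays strictly below $\frac{p(\gamma-1)+1}{\gamma^{p}}$. Using $\|u_{i,q}\|_{p^*}\le C\Lambda$ (Sobolev embedding and \eqref{e4.24}) and $q-p\le p^*-p$ to make this uniform as $q\nearrow p^*$, the absorption reduces to the single smallness requirement $\sup_M f<\mathcal{C}_2$ for the explicit threshold $\mathcal{C}_2$; this is exactly where the hypothesis of the Proposition enters, and it explains why $\mathcal{C}_2$ must be calibrated against $K(n,p)$ and the energy bound $\Lambda$. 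After absorbing the gradient term and letting $L\to\infty$, I obtain $\sup_q\|u_{i,q}\|_{(1+\tau)p^*}<\infty$ with $(1+\tau)p^*>p^*$, which is the desired uniform higher integrability; Vitali's theorem then delivers \eqref{e4.30} as above, the companion limits of the remaining terms in $\mathcal{I}_q^{0}$ being handled by the a.e. convergence and by the weak convergence \eqref{e4.26}.

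The hard part will be making this single Moser step close \emph{uniformly} in $q$ as $q\nearrow p^*$: one must verify that the constant multiplying the absorbed gradient term is bounded away from the absorption coefficient $\frac{p(\gamma-1)+1}{\gamma^{p}}$ by an amount independent of $q$, and this is precisely the role of the smallness condition $\sup_M f<\mathcal{C}_2$. A secondary technical point is the admissibility of the test function at the critical power, which is dealt with through the truncation $u_L$ and a monotone passage $L\to\infty$.
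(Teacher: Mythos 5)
Your proposal is correct and follows essentially the same route as the paper: the paper tests \eqref{e4.28} with $v=(u_{i,q})^{1+p\delta}$ (your $\varphi=u_{i,q}\,u_L^{\,p(\gamma-1)}$ after $L\to\infty$; the truncation is harmless but unnecessary here since $u_{i,q}\in C^{1,\alpha}(M)$ is bounded and bounded below by Lemma \ref{lemma 2.2}), carries out the same H\"older--Sobolev absorption of the critical term uniformly in $q$ under $\sup_Mf<\mathcal{C}_2$ using $\frac{(1+\delta)^p}{1+p\delta}\to1$, and arrives at the same uniform $L^{p^*(1+\delta)}$ bound. The only cosmetic deviation is the final limit passage: you invoke uniform integrability and Vitali's theorem, whereas the paper uses boundedness of $(u_{i,q})^q$ in $L^{1+\delta}(M)$ together with a.e. convergence to get weak convergence tested against $f\in L^{1+1/\delta}(M)$ --- the two mechanisms are interchangeable here.
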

\begin{proof} In \eqref{e4.28}, we choose $v=(u_{i,q})^{1+p\delta}$ for some $\delta>0$ to be determined later.
Then we have
\begin{align}\label{e4.32}
\begin{split}
&\frac{1+p\delta}{(1+\delta)^p}\int_{M}|\nabla_gw_{i,q}|_g^{p}\,dv_g\\
&\quad\quad=|h|\int_M(w_{i,q})^{p}\,dv_g+\int_M{f}{(w_{i,q})^{p}}(u_{i,q})^{q-p}\,dv_g+\int_M\frac{a(x)}{(u_{i,q})^{q-p\delta}}\,dv_g,
\end{split}
\end{align}
where $w_{i,q}=(u_{i,q})^{\delta+1}$. Then, it follows from \eqref{e4.32} and Sobolev inequality that
\begin{align}\label{e4.33}
\begin{split}
\|w_{i,q}\|_{p^*}^p\leq& \bigg(\frac{(K(n,p)^p+1)(1+\delta)^p|h|}{1+p\delta}+A\bigg)\|w_{i,q}\|_{p}^p\\
&+\frac{(K(n,p)^p+1)(1+\delta)^p}{1+p\delta}\bigg(\int_Mf^+(w_{i,q})^p(u_{i,q})^{q-p}\,dv_g+\int_M\frac{a(x)}{(u_{i,q})^{q-p\delta}}\,dv_g\bigg).
\end{split}
\end{align}
By H\"{o}lder's inequality, we find
\begin{equation}\label{e4.34}
\int_M(w_{i,q})^p(u_{i,q})^{q-p}\,dv_g\leq \bigg(\int_M(w_{i,q})^{p^*}\,dv_g\bigg)^{\frac{p}{p^*}}\bigg(\int_M(u_{i,q})^{\frac{p^*(q-p)}{p^*-p}}\,dv_g\bigg)^{\frac{p^*-p}{p^*}}.
\end{equation}
Notice that $\frac{p^*(q-p)}{p^*-p}<q$ as long as $q<p^*$. Again by H\"{o}lder and Sobolev inequalities, one gets
\begin{equation}\label{e4.35}
\int_M(u_{i,q})^{\frac{p^*(q-p)}{p^*-p}}\,dv_g\leq \left(\int_M(u_{i,q})^{p^*}\,dv_g\right)^{\frac{q-p}{p^*-p}}
\leq \big(K(n,p)^p+1+A\big)^{\frac{p^*(q-p)}{p(p^*-p)}}\|u_{i,q}\|^{\frac{p^*(q-p)}{p^*-p}}.
\end{equation}
Therefore, from \eqref{e4.34} and \eqref{e4.35}, we have
\begin{equation}\label{e4.36}
\int_M(w_{i,q})^p(u_{i,q})^{q-p}\,dv_g\leq \|w_{i,q}\|_{p^*}^p \big(K(n,p)^p+1+A\big)^{\frac{q-p}{p}}\|u_{i,q}\|^{q-p}.
\end{equation}
Now, combing \eqref{e4.33} and \eqref{e4.36}, we easily get that
\begin{align}\label{e4.37}
\begin{split}
\|w_{i,q}\|_{p^*}^p\leq&\bigg(\frac{(K(n,p)^p+1)(1+\delta)^p|h|}{1+p\delta}+A\bigg)\|w_{i,q}\|_{p}^p\\
&+\underbrace{\frac{(K(n,p)^p+1)(1+\delta)^p}{1+p\delta}\big(K(n,p)^p+1+A\big)^{\frac{q-p}{p}}\big(\sup_Mf\big)\|u_{i,q}\|^{q-p}}_{\kappa}\|w_{i,q}\|_{p^*}^p\\
&+\frac{(K(n,p)^p+1)(1+\delta)^p}{1+p\delta}\int_M\frac{a(x)}{(u_{i,q})^{q-p\delta}}\,dv_g.
\end{split}
\end{align}
We wish to impose some condition of $\sup_Mf$ such that
\begin{equation}\label{e4.38}
\kappa\leq\frac{(K(n,p)^p+1)(1+\delta)^p}{1+p\delta}\big(K(n,p)^p+1+A\big)^{\frac{p^*-p}{p}}\big(\sup_Mf\big)\Lambda^{p^*-p}<\frac{1}{2},
\end{equation}
where $\Lambda>1$ is as in \eqref{e4.24}. To do so, we let $\sup_Mf<\mathcal{C}_2$ with
\begin{equation}\label{e4.39}
\mathcal{C}_2=\min\left\{\frac{1}{2\big(K(n,p)^p+1\big)}\big(K(n,p)^p+1+A\big)^{-\frac{p^*-p}{p}}\left(\mu p+k_{**}+(1+|h|)k_{**}^{\frac{p}{p^{\flat}}}\right)^{-\frac{p^*-p}{p}},1\right\}.
\end{equation}
Then, we must have
\begin{equation}\label{e4.40}
\big(K(n,p)^p+1\big)\big(K(n,p)^p+1+A\big)^{\frac{p^*-p}{p}}\big(\sup_Mf\big)\Lambda^{p^*-p}<\frac{1}{2}.
\end{equation}
Now, by \eqref{e4.40} and the fact that $\frac{(1+\delta)^p}{1+p\delta}\rightarrow 1$ as $\delta\rightarrow 0$, we can choose
a small $\delta>0$ such that \eqref{e4.38} holds. From now on, we fixed this $\delta$ in \eqref{e4.38} with $p(1+\delta)<p^*$ and $q-p\delta>0$.
In view of \eqref{e4.37}, it is easy to get
\begin{equation}\label{e4.41}
\|w_{i,q}\|_{p^*}^p\leq 2\bigg(\frac{(K(n,p)^p+1)(1+\delta)^p|h|}{1+p\delta}+A\bigg)\|w_{i,q}\|_{p}^p
+2\frac{(K(n,p)^p+1)(1+\delta)^p}{1+p\delta}\int_M\frac{a(x)}{(u_{i,q})^{q-p\delta}}\,dv_g.
\end{equation}
Moreover, by H\"{o}lder's inequality, we have
\begin{equation}\label{e4.42}
\|w_{i,q}\|_p=\|(u_{i,q})^{1+\delta}\|_p=\|u_{i,q}\|_{p(1+\delta)}^{1+\delta}\leq\|u_{i,q}\|_{p^*}^{1+\delta}.
\end{equation}
Then together with Sobolev inequality it yields that $\|w_{i,q}\|_p$ can be controlled by some constant depending on $\Lambda$.
By Lemma \ref{lemma 2.2} and the fact that $q-p\delta>0$, we know that $\int_Ma(u_{i,q})^{-(q-p\delta)}\,dv_g$
is bounded independently of $q$. Combining this with \eqref{e4.41}-\eqref{e4.42}, it shows that $\|w_{i,q}\|_{p^*}$ is bounded,
that is, $\|u_{i,q}\|_{p^*(1+\delta)}$ is bounded. Again with H\"{o}lder's inequality, we obtain
\begin{equation*}
\|(u_{i,q})^q\|_{1+\delta}\leq\|u_{i,q}\|_{p^*(1+\delta)}^{q},
\end{equation*}
which implies that $(u_{i,q})^q$ is bounded in $L^{1+\delta}(M)$.
Thus, by \cite[Theorem 3.45]{Aubin1} and $(u_{i,q})^q\rightarrow (u_i)^{p^*}$ almost everywhere in $M$ as $q\rightarrow p^*$, one gets that
$(u_{i,q})^{q} \rightharpoonup (u_{i})^{p^*}$ weakly in $L^{1+\delta}(M)$ as $q\rightarrow p^*$. Hence, by the definition of weak convergence
and the fact that $L^{1+1/{\delta}}(M)$ is the dual space of $L^{1+\delta}(M)$, there holds
\begin{equation*}
\int_Mf(u_{i,q})^q\,dv_g\rightarrow\int_M{f}{(u_{i})^{p^*}}\,dv_g
\end{equation*}
as $q\rightarrow p^*$, since we clearly have $f\in L^{1+1/{\delta}}(M)$.
\end{proof}

With the help of the Proposition \ref{prop4.2}, we can easily get the following result.
\begin{proposition}\label{prop4.3}
Assume that all requirements in \textup{Proposition \ref{prop4.2}} are fulfilled. Then there holds
$$\|\nabla_{g}u_{i,q}\|_p\rightarrow \|\nabla_{g}u_{i}\|_p \quad \text{as }\, q\rightarrow p^*.$$
\end{proposition}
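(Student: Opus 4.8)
The plan is to compute the limit of $\|\nabla_g u_{i,q}\|_p^p$ explicitly and recognize it as $\|\nabla_g u_i\|_p^p$; since $t\mapsto t^{1/p}$ is continuous, the convergence of the $p$-th powers then immediately yields the convergence of the norms. The essential ingredient is to test the subcritical equation against its own solution and to combine this with the convergences already available, most crucially the critical-term convergence \eqref{e4.30} supplied by Proposition \ref{prop4.2}.

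First I would insert $v=u_{i,q}$ into the weak formulation \eqref{e4.28}, which gives the energy-type identity
\begin{equation*}
\|\nabla_g u_{i,q}\|_p^p=-h\|u_{i,q}\|_p^p+\int_M f(u_{i,q})^q\,dv_g+\int_M\frac{a}{(u_{i,q})^q}\,dv_g.
\end{equation*}
I then pass to the limit $q\rightarrow p^*$ term by term. The linear term converges because $u_{i,q}\rightarrow u_i$ strongly in $L^p(M)$ (recall $p<p^*$), so $\|u_{i,q}\|_p^p\rightarrow\|u_i\|_p^p$. The critical term $\int_M f(u_{i,q})^q\,dv_g\rightarrow\int_M f u_i^{p^*}\,dv_g$ is precisely the statement of Proposition \ref{prop4.2}. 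For the singular term I would invoke Lemma \ref{lemma 2.2}, which provides a lower bound $u_{i,q}\ge c_0>0$ uniform in $q\in(p^{\flat},p^*)$; since $c_0\le 1$ this yields the $q$-independent majorant $a(u_{i,q})^{-q}\le \|a\|_\infty\,c_0^{-p^*}\in L^1(M)$. Combining this with $u_{i,q}\rightarrow u_i$ a.e. and $q\rightarrow p^*$, which give $(u_{i,q})^{-q}\rightarrow u_i^{-p^*}$ a.e. by continuity of $(t,s)\mapsto t^{-s}$ on $[c_0,\infty)\times\mathbb{R}$, Lebesgue's dominated convergence theorem forces $\int_M a(u_{i,q})^{-q}\,dv_g\rightarrow\int_M a\,u_i^{-p^*}\,dv_g$. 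Collecting the three limits,
\begin{equation*}
\lim_{q\rightarrow p^*}\|\nabla_g u_{i,q}\|_p^p=-h\|u_i\|_p^p+\int_M f u_i^{p^*}\,dv_g+\int_M\frac{a}{u_i^{p^*}}\,dv_g.
\end{equation*}

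Finally I would identify the right-hand side with $\|\nabla_g u_i\|_p^p$. By Claim \ref{claim3} the function $u_i$ solves \eqref{eq:M} weakly with $\Sigma_i=|\nabla_g u_i|_g^{p-2}\nabla_g u_i$, and since $u_i\in C^{1,\alpha}(M)$ is strictly positive on the compact manifold $M$ it is bounded below by a positive constant; hence $u_i$ is itself an admissible test function in the weak form of \eqref{eq:M}. Taking $v=u_i$ there produces exactly
\begin{equation*}
\|\nabla_g u_i\|_p^p=-h\|u_i\|_p^p+\int_M f u_i^{p^*}\,dv_g+\int_M\frac{a}{u_i^{p^*}}\,dv_g,
\end{equation*}
so that $\lim_{q\rightarrow p^*}\|\nabla_g u_{i,q}\|_p^p=\|\nabla_g u_i\|_p^p$ and therefore $\|\nabla_g u_{i,q}\|_p\rightarrow\|\nabla_g u_i\|_p$. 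I expect the only genuinely delicate point to be the passage to the limit in the negative-power term: it is here that the uniform positivity from Lemma \ref{lemma 2.2} is indispensable, since it simultaneously furnishes the integrable majorant and secures the a.e.\ convergence of $(u_{i,q})^{-q}$. Everything else reduces to the strong $L^p$ convergence and to the already-established critical-term convergence of Proposition \ref{prop4.2}.
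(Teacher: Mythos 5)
Your proof is correct, but it follows a genuinely different route from the paper's. The paper tests the subcritical weak formulation \eqref{e4.28} with $v=u_{i,q}-u_i$, shows that every term on the right-hand side of the resulting identity \eqref{e4.43} tends to zero (the critical term via \eqref{e4.26} together with Proposition \ref{prop4.2}), and then invokes Lindqvist's monotonicity inequality for the $p$-Laplacian, exactly as in Proposition \ref{prop3.3}, to conclude the \emph{strong} convergence $\nabla_g u_{i,q}\to\nabla_g u_i$ in $L^p(M)$, from which the norm convergence follows. You instead test twice "on the diagonal" --- once with $v=u_{i,q}$ in \eqref{e4.28} and once with $v=u_i$ in the limit equation \eqref{eq:M} --- and match the two energy identities term by term, using the strong $L^p$ convergence, Proposition \ref{prop4.2} for the critical term, and Lemma \ref{lemma 2.2} plus dominated convergence for the singular term. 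This bypasses the monotonicity machinery entirely and yields exactly the claimed norm convergence, which is all that Proposition \ref{prop4.4} actually uses; moreover, since $\nabla_g u_{i,q}\rightharpoonup\nabla_g u_i$ weakly in $L^p(M)$ and $L^p$ is uniformly convex, your norm convergence recovers the paper's strong convergence a posteriori by the Radon--Riesz property, so nothing is lost. One small point of hygiene: Lemma \ref{lemma 2.2} is stated for positive solutions of \eqref{eq:M2} with $\varepsilon>0$, so the uniform lower bound $u_{i,q}\geq c_0$ is strictly speaking inherited by $u_{i,q}$ as the a.e.\ limit of the $u_{i,q}^{\varepsilon_j}$ as $\varepsilon_j\to 0$; this is precisely how the paper itself uses the lemma in Claim \ref{claim3}, so your invocation is sound but deserves that one-line justification. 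Your identification of $\|\nabla_g u_i\|_p^p$ with the limiting right-hand side is also legitimate: $u_i\in C^{1,\alpha}(M)$ is bounded below by a positive constant on the compact $M$, so $u_i$ is an admissible test function and the singular integral $\int_M a\,u_i^{-p^*}\,dv_g$ is finite.
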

\begin{proof}
It suffices to prove that $\nabla_{g}u_{i,q} \rightarrow \nabla_{g}u_{i}$ strongly in $L^p(M)$ as $q\rightarrow p^*$.
The choice of $u_{i,q}-u_i$  as a test function in \eqref{e4.28} gives us
\begin{align}\label{e4.43}
\begin{split}
&\int_{M}\left\langle|\nabla_gu_{i,q}|_g^{p-2}\nabla_gu_{i,q}-|\nabla_gu_{i}|_g^{p-2}\nabla_gu_{i}, \nabla_gu_{i,q}-\nabla_gu_{i}\right\rangle_g\,dv_g\\
&\quad\quad\quad=-h\int_M(u_{i,q})^{p-1}(u_{i,q}-u_{i})\,dv_g+\int_M{f}{(u_{i,q})^{q-1}}(u_{i,q}-u_{i})\,dv_g\\
&\quad\quad\quad\quad\,\,+\int_M\frac{a(x)}{(u_{i,q})^{q+1}}(u_{i,q}-u_{i})\,dv_g-\int_{M}|\nabla_gu_{i}|_g^{p-2}\left\langle\nabla_gu_{i}, \nabla_g(u_{i,q}-u_{i})\right\rangle_g\,dv_g.
\end{split}
\end{align}
We study the right-hand side of \eqref{e4.43}. It is straightforward to check with the usual arguments that, the limits of the first term, the third term and the forth term
vanish as $q\rightarrow p^*$. While for the second one, we have, using \eqref{e4.26} and Proposition \ref{prop4.2},
\begin{equation*}
\int_M{f}(u_{i,q})^{q}\,dv_g-\int_M{f}u_{i}(u_{i,q})^{q-1}\,dv_g\rightarrow 0  \quad \text{as }\, q\rightarrow p^*.
\end{equation*}
Therefore, we deduce
\begin{equation*}
\int_{M}\left\langle|\nabla_gu_{i,q}|_g^{p-2}\nabla_gu_{i,q}-|\nabla_gu_{i}|_g^{p-2}\nabla_gu_{i}, \nabla_gu_{i,q}-\nabla_gu_{i}\right\rangle_g\,dv_g\rightarrow0 \quad \text{as }\, q\rightarrow p^*.
\end{equation*}
Now,  by a similar argument as in Proposition \ref{prop3.3}, one gets
$\nabla_{g}u_{i,q} \rightarrow \nabla_{g}u_{i}$ strongly in $L^p(M)$ as $q\rightarrow p^*$.
\end{proof}

We can now easily conclude that Eq. \eqref{eq:M} has at least two positive solutions.
\begin{proposition}\label{prop4.4}
Assume that all requirements in \textup{Proposition \ref{prop4.2}} are fulfilled, then Eq. \eqref{eq:M} has at least two positive solutions, while
one has strictly negative energy and the other has positive energy.
\end{proposition}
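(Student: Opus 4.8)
The plan is to separate the two functions $u_1$ and $u_2$ produced in Claim \ref{claim3} by their limiting energies. Since the energy functional $\mathcal{I}_{p^*}^0$ is single-valued, it will suffice to prove $\mathcal{I}_{p^*}^0(u_1)<0<\mathcal{I}_{p^*}^0(u_2)$: this forces $u_1\not\equiv u_2$, and as both are positive $C^{1,\alpha}(M)$ solutions of \eqref{eq:M} by Claim \ref{claim3}, we obtain the two required distinct positive solutions, one of strictly negative and one of strictly positive energy.

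The first step is to pass the energies to the limit, namely to establish
$$\lim_{q\rightarrow p^*}\mathcal{I}_q^0(u_{i,q})=\mathcal{I}_{p^*}^0(u_i),\qquad i=1,2.$$
I would treat the four terms of $\mathcal{I}_q^0(u_{i,q})$ one at a time. The gradient term $\tfrac{1}{p}\|\nabla_g u_{i,q}\|_p^p$ converges to $\tfrac{1}{p}\|\nabla_g u_i\|_p^p$ by Proposition \ref{prop4.3}; the linear term $\tfrac{h}{p}\|u_{i,q}\|_p^p$ converges by the strong $L^p(M)$ convergence of $u_{i,q}$ to $u_i$; the critical term converges by Proposition \ref{prop4.2}, which is exactly \eqref{e4.30}, so that $-\tfrac{1}{q}\int_M f(u_{i,q})^q\,dv_g\rightarrow-\tfrac{1}{p^*}\int_M f(u_i)^{p^*}\,dv_g$; and the singular term $\tfrac{1}{q}\int_M a(u_{i,q})^{-q}\,dv_g$ converges to $\tfrac{1}{p^*}\int_M a(u_i)^{-p^*}\,dv_g$ by the uniform lower bound of Lemma \ref{lemma 2.2} together with Lebesgue's dominated convergence theorem.

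The decisive step is then to pin down the signs uniformly in $q$. For $u_1$, recall from Step 2 of Claim \ref{claim2} that $\mathcal{I}_q^0(u_{1,q})=\mu_{k_1,q}$ with $k_1\in(k_*,k_{1,q})$; since $k_*<k_0<k_{1,q}$, the definition of $\mathcal{E}_{1,q}^{\varepsilon}$ gives $\mu_{k_1,q}\leq\mu_{k_0,q}^0$, and by the $q$-independent estimate \eqref{e3.6.2} of Remark \ref{rm3.4} the latter is bounded above by a strictly negative constant. Letting $q\rightarrow p^*$ therefore yields $\mathcal{I}_{p^*}^0(u_1)<0$. For $u_2$, recall that $\mathcal{I}_q^0(u_{2,q})=\mu_{k_2,q}$ arises as the limit of the mountain-pass levels $\mathcal{E}_{2,q}^{\varepsilon_j}$. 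Any admissible path in $\Gamma$ joins $u_{\bar k_1,q}$ to $u_{\bar k_2,q}$ with $\bar k_1<k_{1,q}<k_{2,q}<\bar k_2$, so by continuity of $\|\cdot\|_q^q$ along the path it must cross the hyper-surface $\mathcal{B}_{k^*,q}$; hence $\mathcal{E}_{2,q}^{\varepsilon}\geq\mu_{k^*,q}^{\varepsilon}$, and Proposition \ref{prop3.2} gives $\mu_{k^*,q}^{\varepsilon}\geq\tfrac12 m (k^*)^{p/q}\geq\tfrac12 m\,k_{1,q}^{p/q}$. Since $k_{1,q}\rightarrow l>0$ as $q\rightarrow p^*$, this lower bound is uniformly positive for $q$ near $p^*$, so $\mu_{k_2,q}\geq c_0>0$ and thus $\mathcal{I}_{p^*}^0(u_2)\geq c_0>0$.

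Combining these, $\mathcal{I}_{p^*}^0(u_1)<0<\mathcal{I}_{p^*}^0(u_2)$, so $u_1\not\equiv u_2$ and \eqref{eq:M} possesses at least two distinct positive solutions. The main obstacle is precisely this uniform-in-$q$ separation of the two energies from zero: the term-by-term passage to the limit rests on Propositions \ref{prop4.2} and \ref{prop4.3}, which are already available, but one must still ensure that the negative energy does not drift up to $0$ and the positive energy does not collapse to $0$ as $q\nearrow p^*$. Both are secured by the $q$-independent bound \eqref{e3.6.2} on the one side and by the mountain-pass estimate of Proposition \ref{prop3.2} combined with $k_{1,q}\rightarrow l>0$ on the other.
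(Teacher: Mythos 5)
Your proposal is correct and takes essentially the same route as the paper: pass the energies to the limit via Propositions \ref{prop4.2} and \ref{prop4.3}, then separate the limiting energies by sign using the uniform negative bound \eqref{e3.6.2} on one side and the positivity of the mountain-pass level on the other. Your write-up merely makes explicit two points the paper leaves implicit from Claim \ref{claim2}, namely the path-crossing estimate $\mathcal{E}_{2,q}^{\varepsilon}\geq \mu_{k^{*},q}^{\varepsilon}\geq \tfrac{1}{2}m\,k_{1,q}^{p/q}$ and the fact that $m$ and $k_{1,q}$ remain bounded away from $0$ as $q\nearrow p^{*}$.
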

\begin{proof}
It suffices to compare the energies of $u_i$ ($i=1,2$). By Proposition \ref{prop4.2} and \ref{prop4.3}, one has
$\lim_{q\rightarrow p^*}{\mathcal{I}}_{q}^{0}(u_{i,q})=\mathcal{I}_{p^*}^0(u_i)$ for $i=1, 2$.
According to \eqref{e3.6.2}, we have $\mathcal{I}_{p^*}^0(u_1)<0<\mathcal{I}_{p^*}^0(u_2)$. Thus, $u_1$ and $u_2$ have different energies.
This completes the proof.
\end{proof}

\subsection{The scaling argument}

In this subsection, we use the scaling technique to complete the proof of Theorem \ref{theorem1} by removing the condition
\eqref{e4.1} mentioned in Proposition \ref{prop4.1}. We first observe that under the variable change $\widetilde{u}=\frac{u}{c}$,
where $c$ is a suitable constant to be determined later, Eq. \eqref{eq:M} becomes
\begin{equation}\label{e4.44}
\Delta_{p,g}u+h(x)u^{p-1}=\widetilde{f}(x)u^{p^*-1}+\frac{\widetilde{a}(x)}{u^{p^*+1}},
\end{equation}
with
\begin{equation*}
\widetilde{f}=c^{p^*-p}f,\quad \widetilde{a}=\frac{a}{c^{p^*+p}}.
\end{equation*}
We wish to find a suitable constant $c>0$ such that our new coefficients $\widetilde{f}$ and $\widetilde{a}$ verify the conditions in Proposition \ref{prop4.1}
and \ref{prop4.2}. Clearly, if $u$ is a solution of Eq. \eqref{e4.44}, then $cu$ will solve Eq. \eqref{eq:M} accordingly. Obviously, the coefficient $h$
remains unchanged after scaling and we also have $\lambda_f=\lambda_{\widetilde{f}}$ since $c>0$. Therefore, the following conditions
\begin{equation*}
|h|<\lambda_{\widetilde{f}},\quad \widetilde{a}>0, \quad \int_M\widetilde{f}\,dv_g<0, \quad\sup_M{\widetilde{f}}^+>0
\end{equation*}
are fulfilled. Besides, it is easy to see that
\begin{equation*}
\frac{\sup_M\widetilde{f}}{\int_{M}|\widetilde{f}^-|\,dv_g}=\frac{\sup_Mf}{\int_{M}|f^-|\,dv_g}.
\end{equation*}
We now wish to remove \eqref{e4.1} but still keep other conditions. In other words, we have to choose a
suitable $c$ such that the following conditions
\begin{equation}\label{e4.46}
|h|\leq \frac{\eta_0}{p^*}\int_{M}|\widetilde{f}^-|dv_g,
\end{equation}
and
\begin{equation}\label{e4.47}
\sup_M\widetilde{f}<\mathcal{C}_2,
\end{equation}
and
\begin{equation}\label{e4.48}
\int_{M}\widetilde{a}\,dv_g< \frac{p}{2(n-p)}\Big(\frac{2n-p}{2(n-p)}\Big)^{\frac{2n}{p}-1}\Big(\frac{|h|}{\int_{M}|\widetilde{f}^-|dv_g}\Big)^{\frac{2n}{p}}\int_M|\widetilde{f}^-|\,dv_g
\end{equation}
hold. Indeed, \eqref{e4.46} and \eqref{e4.48} can be rewritten as the following
\begin{equation}\label{e4.49}
|h|\leq \frac{\eta_0c^{p^*-p}}{p^*}\int_{M}|\widetilde{f}^-|dv_g
\end{equation}
and
\begin{equation*}
\int_{M}{a}\,dv_g< \frac{p}{2(n-p)}\Big(\frac{2n-p}{2(n-p)}\Big)^{\frac{2n}{p}-1}\Big(\frac{|h|}{\int_{M}|{f}^-|dv_g}\Big)^{\frac{2n}{p}}\int_M|{f}^-|\,dv_g.
\end{equation*}
Thus, the condition \eqref{e1.3} is invariant under the variable change. In view of \eqref{e4.49}, we can choose
$$
c=\left(\frac{p^*|h|}{\eta_0\int_{M}|\widetilde{f}^-|dv_g}\right)^{\frac{1}{p^*-p}}.
$$
It suffices to prove that this particular choice of $c$ and the condition \eqref{e1.4} are enough to guarantee \eqref{e4.47}.
Notice that
$$
\sup_M\widetilde{f}=c^{p^*-p}\sup_Mf=\left(\frac{p^*|h|}{\eta_0\int_{M}|\widetilde{f}^-|dv_g}\right)\big(\sup_Mf\big)
=\frac{|h|p^*}{\eta_0}\frac{\sup_Mf}{\int_M|f^-|\,dv_g}.
$$
Therefore, if we assume
$$
\frac{\sup_Mf}{\int_M|f^-|\,dv_g}<\frac{\eta_0}{|h|p^*}\mathcal{C}_2,
$$
then the condition \eqref{e4.47} holds. In conclusion, if the constant $\mathcal{C}$ in the Theorem \ref{theorem1} equals
\begin{equation*}
\min\left\{\mathcal{C}_1, \frac{\eta_0}{|h|p^*}\mathcal{C}_2\right\}
\end{equation*}
we know that Eq. \eqref{eq:M} has at least two positive solutions. This finishes the proof of Theorem \ref{theorem1}.

\section{Proof of Theorem \ref{theorem2}}

In this section, we prove Theorem \ref{theorem2} which provides a sufficient condition for the solvability of \eqref{eq:M}.
As in the previous sections, we need to study the asymptotic behavior of $\mu_{k,q}^{\varepsilon}$ for small $k$ and large $k$, respectively.

We first assume that $f\leq 0$ but not strictly negative. We consider two possible cases.

\textbf{Case I.} $\sup_Mf=0$ and $\int_{\{f=0\}}1\,dv_g=0$. In this case, there holds $f<0$ almost everywhere in $M$ which
implies that $\mathcal{A}=\emptyset$. Hence, it holds $\lambda_f=+\infty$. However, for each $\eta\neq0$, $\lambda_{f,\eta,q}$ is well defined as in
\eqref{eq2.4}, and is monotone decreasing with respect to $\eta$ whose proof is exactly the same as the proof of Lemma \ref{lemma3.3}.
Moreover, we have the following lemma which is an analogous version of Lemma \ref{lemma3.5}, and we omit its proof.
\begin{lemma}\label{lemma5.1}
There exists $\eta_0$ such that for all $\eta<\eta_0$, there exists $q_{\eta}\in (p^{\flat}, p^*)$ such that
$\lambda_{f,\eta,q}>|h|$ for every $q\in (q_{\eta}, p^*)$.
\end{lemma}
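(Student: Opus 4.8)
The plan is to mimic, almost verbatim, the contradiction argument used in Lemma \ref{lemma3.5}, with the single structural change that the role of the finite threshold $\lambda_f-\delta_0$ there is now played by the finite constant $|h|$, and with the terminal contradiction coming not from the variational characterization of $\lambda_f$ but directly from the emptiness of $\mathcal{A}$ in Case I. First I would negate the conclusion: assume that for every $\eta_0>0$ there exist some $\eta<\eta_0$ and a monotone sequence $q_j\nearrow p^*$ with $\lambda_{f,\eta,q_j}\leq|h|$ for all $j$. By the Case I analogue of Lemma \ref{lemma3.3}, each $\lambda_{f,\eta,q_j}$ is attained by a nonnegative $v_{\eta,q_j}\in\mathcal{A}_{\eta,q_j}$, so that $\|v_{\eta,q_j}\|_{q_j}=1$, $\int_M|f^-|v_{\eta,q_j}^{q_j}\,dv_g=\eta\int_M|f^-|\,dv_g$, and $\|\nabla_g v_{\eta,q_j}\|_p^p\leq|h|\,\|v_{\eta,q_j}\|_p^p$.

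Next I would run the same two-stage limit as in Lemma \ref{lemma3.5}: first $j\to+\infty$ at $\eta$ fixed, then $\eta\to0$. Since $\mathrm{Vol}_g(M)=1$ and $q_j>p$, H\"older's inequality gives $\|v_{\eta,q_j}\|_p\leq1$, whence $\|\nabla_g v_{\eta,q_j}\|_p^p\leq|h|$ and $\{v_{\eta,q_j}\}_j$ is bounded in $H_1^p(M)$; extracting a weak limit $v_{\eta,p^*}$ and using weak lower semicontinuity yields $\|\nabla_g v_{\eta,p^*}\|_p^p\leq|h|\,\|v_{\eta,p^*}\|_p^p$. The crucial uniform lower bound is obtained exactly as in \eqref{e3.12}--\eqref{e3.13}: from $1=\|v_{\eta,q_j}\|_{q_j}\leq\|v_{\eta,q_j}\|_{p^*}$ and the Sobolev inequality \eqref{eq2.1} with $\varepsilon=1$ one gets
\begin{equation*}
\|v_{\eta,q_j}\|_p^p\geq\frac{1}{(K(n,p)^p+1)|h|+A}>0,
\end{equation*}
a bound independent of both $j$ and $\eta$ that survives each weak limit and guarantees nontriviality of the final limit. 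For the defect constraint I would combine H\"older with $\int_M|f^-|v_{\eta,q_j}^{q_j}\,dv_g=\eta\int_M|f^-|\,dv_g$ to obtain $\int_M|f^-|v_{\eta,q_j}^{p^{\flat}}\,dv_g\leq\eta^{p^{\flat}/q_j}\int_M|f^-|\,dv_g$, and then Fatou's lemma to pass this bound to $v_{\eta,p^*}$.

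Finally, letting $\eta\to0$ and extracting one more weak limit $v\in H_1^p(M)$ with $v\geq0$, the lower bound yields $\|v\|_p^p\geq\bigl((K(n,p)^p+1)|h|+A\bigr)^{-1}>0$, so $v\not\equiv0$, while the defect estimate forces $\int_M|f^-||v|^{p^{\flat}}\,dv_g=0$ and hence $\int_M|f^-||v|^{p-1}\,dv_g=0$. Thus $v\in\mathcal{A}$. But in Case I we have $f<0$ almost everywhere, so $|f^-|>0$ a.e.\ and every nonnegative $v\not\equiv0$ satisfies $\int_M|f^-||v|^{p-1}\,dv_g>0$; that is, $\mathcal{A}=\emptyset$, the desired contradiction. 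I expect the only genuinely delicate point to be bookkeeping the two nested weak limits so that both the nontriviality bound and the vanishing of the defect integral are simultaneously inherited by $v$; everything else is a direct transcription of the estimates in Lemma \ref{lemma3.5}.
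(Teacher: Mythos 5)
Your proof is correct and is exactly the argument the paper intends: the paper omits the proof of Lemma \ref{lemma5.1} as an ``analogous version of Lemma \ref{lemma3.5}'', and your transcription --- replacing the threshold $\lambda_f-\delta_0$ by $|h|$, keeping the uniform lower bound $\|v_{\eta,q_j}\|_p^p\geq\bigl((K(n,p)^p+1)|h|+A\bigr)^{-1}$ from the Sobolev inequality, and passing the defect constraint to the limit via H\"older and Fatou --- is that analogue. Your closing step, deriving the contradiction from $\mathcal{A}=\emptyset$ (since $f<0$ a.e.\ forces $\int_M|f^-|v^{p-1}\,dv_g>0$ for any nonnegative $v\not\equiv0$) rather than from the Rayleigh characterization of $\lambda_f$, is precisely the adjustment required in Case I, where $\lambda_f=+\infty$.
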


\textbf{Case II.} $\sup_Mf=0$ and $\int_{\{f=0\}}1\,dv_g>0$. Under this case, $\lambda_f$ is well defined and finite.
A careful analysis shows that all results from section \ref{lfeq} remain hold. So we omit it here.

We are now in a position to study the behavior of $\mu_{k,q}^{\varepsilon}$ for $k\rightarrow +\infty$ when $\sup_Mf=0$.
\begin{proposition}\label{prop5.1}
Suppose $\sup_Mf=0$. If

\noindent- either $\int_{\{f=0\}}1\,dv_g=0$,

\noindent- or $\int_{\{f=0\}}1\,dv_g>0$ and $\lambda_f>|h|$,\\
then $\mu_{k,q}^{\varepsilon}\rightarrow +\infty$ as $k\rightarrow +\infty$ for any $\varepsilon\geq0$ sufficiently small
and any $q$ sufficiently close to $p^*$ but all are fixed.
\end{proposition}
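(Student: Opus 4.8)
The plan is to reduce everything to the functional $G_q$ of \eqref{e3.27} and to show it diverges on $\mathcal{B}_{k,q}$. Since $\sup_Mf=0$ forces $f^+\equiv 0$ and $f=-|f^-|\le 0$, the energy becomes
\[
\mathcal{I}_q^{\varepsilon}(u)=\frac1p\|\nabla_gu\|_p^p-\frac{|h|}{p}\|u\|_p^p+\frac1q\int_M|f^-||u|^q\,dv_g+\frac1q\int_M\frac{a}{(u^2+\varepsilon)^{q/2}}\,dv_g.
\]
Because $a\ge 0$, the last term is non-negative for every $\varepsilon\ge 0$, so $\mathcal{I}_q^{\varepsilon}(u)\ge G_q(u)$ uniformly in $\varepsilon\ge 0$, and it suffices to bound $\inf_{u\in\mathcal{B}_{k,q}}G_q(u)$ below by a quantity that tends to $+\infty$ with $k$. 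I would follow the two-case dichotomy of Proposition \ref{prop3.2}.

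First I would fix the threshold: under either hypothesis I can fix $\eta_0>0$ and take $q$ close enough to $p^*$ that $\lambda:=\lambda_{f,\eta_0,q}>|h|$, which is Lemma \ref{lemma5.1} when $\int_{\{f=0\}}1\,dv_g=0$, and which follows from $\lambda_f>|h|$ together with the approximation Lemma \ref{lemma3.5} when $\int_{\{f=0\}}1\,dv_g>0$. \textbf{Case 1:} if $\int_M|f^-||u|^q\,dv_g\ge\eta_0k\int_M|f^-|\,dv_g$, then from $\mathrm{Vol}_g(M)=1$ and H\"older's inequality $\|u\|_p^p\le k^{p/q}$, whence
\[
G_q(u)\ge-\frac{|h|}{p}k^{p/q}+\frac{\eta_0k}{q}\int_M|f^-|\,dv_g,
\]
which depends only on $k$ and tends to $+\infty$ because $p/q<1$.

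\textbf{Case 2:} if instead $\int_M|f^-||u|^q\,dv_g<\eta_0k\int_M|f^-|\,dv_g$, then $k^{-1/q}u\in\mathcal{A}_{\eta_0,q}'$, so the definition of $\lambda_{f,\eta_0,q}'=\lambda_{f,\eta_0,q}$ (Lemma \ref{lemma3.3}) gives $\|\nabla_gu\|_p^p\ge\lambda\|u\|_p^p$. Dropping the non-negative $|f^-|$-term and using $\|u\|_p^p\le\lambda^{-1}\|\nabla_gu\|_p^p$ yields $G_q(u)\ge\frac1p\bigl(1-|h|/\lambda\bigr)\|\nabla_gu\|_p^p$ with $1-|h|/\lambda>0$. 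To convert this into genuine growth I would feed the coercivity back into the Sobolev inequality \eqref{eq2.1} (with $\varepsilon=1$): combining $\|u\|_{p^*}^p\le(K(n,p)^p+1)\|\nabla_gu\|_p^p+A\|u\|_p^p$ with $\|u\|_p^p\le\lambda^{-1}\|\nabla_gu\|_p^p$ and with $\|u\|_{p^*}\ge\|u\|_q=k^{1/q}$ (unit volume, $q<p^*$) gives
\[
\|\nabla_gu\|_p^p\ge\frac{k^{p/q}}{(K(n,p)^p+1)+A/\lambda},
\]
so that $G_q(u)\ge c\,k^{p/q}$ for a positive constant $c$ independent of $u$.

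Taking the infimum over $u\in\mathcal{B}_{k,q}$ then gives $\mu_{k,q}^{\varepsilon}\ge\inf_{\mathcal{B}_{k,q}}G_q\to+\infty$ as $k\to+\infty$, uniformly in $\varepsilon\ge 0$, which proves the proposition. I expect Case 2 to be the crux: the coercivity inequality by itself only produces $G_q(u)\ge 0$, and the divergence must be recovered by reinjecting it into the Sobolev inequality to turn the constraint $\|u\|_q^q=k$ into a lower bound on $\|\nabla_gu\|_p^p$ growing like $k^{p/q}$. Securing $\lambda_{f,\eta_0,q}>|h|$ for $q$ near $p^*$ via Lemmas \ref{lemma5.1} and \ref{lemma3.5} is the background fact that makes this step legitimate, and it is precisely where the two hypotheses on $f$ enter.
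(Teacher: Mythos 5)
Your proposal is correct and takes essentially the same route as the paper: the paper's proof of Proposition \ref{prop5.1} likewise secures $\lambda_{f,\eta_0,q}+h>0$ in the two cases via Lemma \ref{lemma5.1}, respectively Lemma \ref{lemma3.5} combined with $\lambda_f>|h|$, and then reruns the two-case dichotomy of Proposition \ref{prop3.2} to obtain $G_q(u)\geq mk^{\frac{p}{q}}$, whence $\mathcal{I}_q^{\varepsilon}(u)\geq G_q(u)\rightarrow+\infty$ uniformly in $\varepsilon\geq 0$ since $\sup_Mf=0$ and $a\geq0$. Your streamlined Case 2 (dropping the $|f^-|$-term and reinjecting $\|\nabla_gu\|_p^p\geq\lambda\|u\|_p^p$ into the Sobolev inequality) even reproduces exactly the paper's constant $\frac{\lambda-|h|}{p\left[A+(K(n,p)^p+1)\lambda\right]}$ arising from the $\alpha$--$\beta$ splitting behind \eqref{e3.22}, so the difference from the paper is purely algebraic.
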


\begin{proof}
We begin to prove that there exists some $\eta_0>0$ and its corresponding $q_{\eta_0}\in (p^{\flat}, p^*)$ such that
$\delta_0=(\lambda_{f, \eta_0, q}+h)/p>0$ for any $q\in (q_{\eta_0}, p^*)$.
As in the previous mentioned, we consider two cases separately.

\noindent \textbf{Case 1.} Suppose that $\sup_Mf=0$ and $\int_{\{f=0\}}1\,dv_g=0$.
In this case, $\lambda_f=+\infty$. Since $h$ is fixed, by Lemma \ref{lemma5.1},
there exists $\eta_0$ and its corresponding $q_{\eta}\in (p^{\flat}, p^*)$ such that $\lambda_{f,\eta,q}+h>0$ for any $q\in (q_{\eta}, p^*)$.
This proves the positivity of $\delta_0$.

\noindent \textbf{Case 2.} Suppose that $\sup_Mf=0$ and $\int_{\{f=0\}}1\,dv_g>0$.
In this case, $\lambda_f$ is well defined and finite. Notice that $\lambda_f>|h|$.
Since all results in section \ref{lfeq} still hold, as in the proof of Proposition \ref{prop3.2},
there exists $0<\eta_0<2$ and its corresponding $q_{\eta_0}\in (p^{\flat}, p^*)$ such that
$$0\leq\lambda_f-\lambda_{f, \eta_0, q}<\frac{1}{2}(\lambda_f-|h|)$$
for any $q\in (q_{\eta_0}, p^*)$. Therefore, $\delta_0>\frac{1}{2p}(\lambda_{f}+h)>0$.

Now having the strictly positivity of $\delta_0$ we can easily go through the proof of Proposition \ref{prop3.2} and get
$G_q(u)\geq mk^{\frac{p}{q}}$ where $m$ is given as in \eqref{e3.22}. This implies that
$\mathcal{I}_q^{\varepsilon}(u) \geq  mk^{\frac{p}{q}}$ since $\sup_Mf=0$. Since $\delta_0$ has a strictly positive lower bound, so does $m$.
The proof now follows easily.
\end{proof}
\begin{remark}
For small $k$, using the same argument as in Lemma \ref{asymptotic1} (i), one has $\mu_{k,q}^{k^{\frac{2}{q}}}\rightarrow +\infty$ as $k\rightarrow 0^+$.
Thus, compare with the case $\sup_Mf>0$, the curve $k\mapsto \mu_{k,q}^{\varepsilon}$ takes a shape as shown in Figure \ref{Figure2}.
\end{remark}
\begin{figure}
  \centering
  \includegraphics[width=7cm]{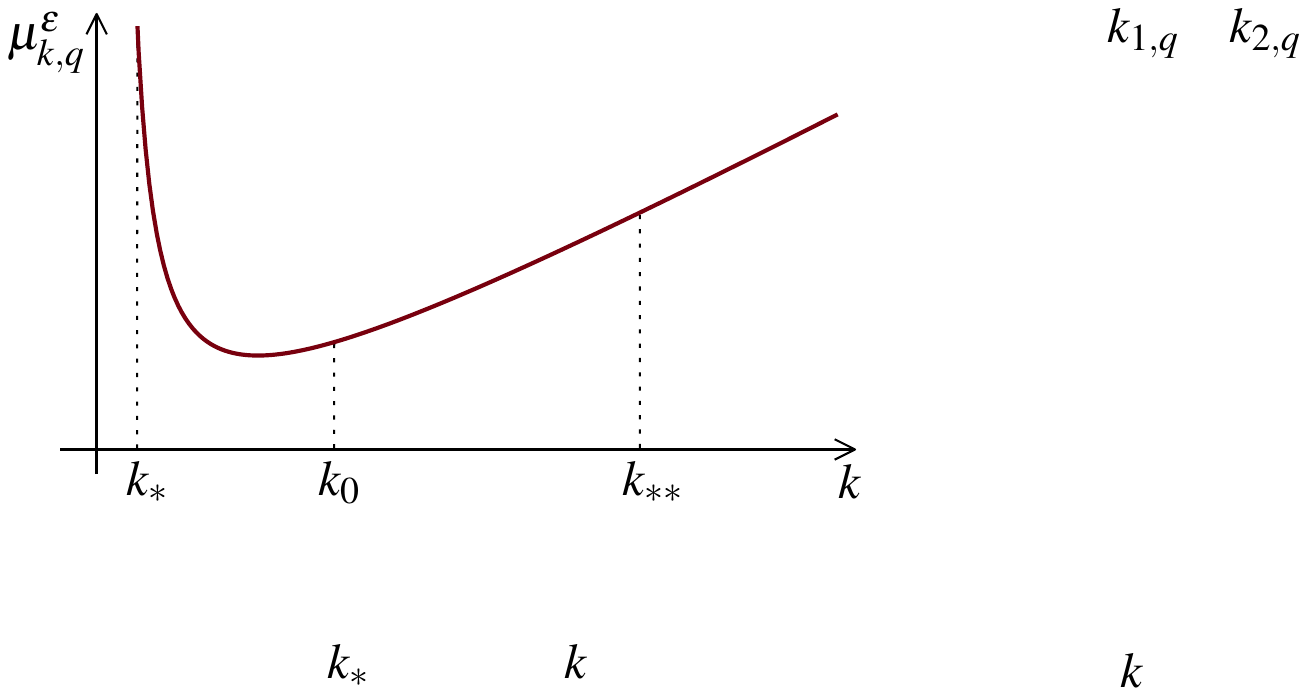}\\
  \caption{The asymptotic behavior of $\mu_{k,q}^{\varepsilon}$ when $\sup\limits_Mf\leq0$.}\label{Figure2}
\end{figure}
We are now in a position to prove Theorem \ref{theorem2} (1) which is similar to the proof of Theorem \ref{theorem1},
and  therefore we just sketch it and omit some details.

\begin{proof}[\textup{\textbf{Proof of Theorem \ref{theorem2} (1)}}]
 Suppose that $\sup_Mf=0$. Let $q\in (q_{\eta_0}, p^*)$. Since
\begin{equation*}\label{eq5.1}
\mathcal{I}_q^{\varepsilon}(k^{\frac{1}{q}})=\frac{h}{p}k^{\frac{p}{q}}-\frac{k}{q}\int_{M}f\,dv_g+\frac{1}{q}\int_{M}\frac{a}{(k^{\frac{2}{q}}
+\varepsilon)^{\frac{q}{2}}}\,dv_g,
\end{equation*}
by solving the following equation
\begin{equation*}\label{eq5.2}
\frac{h}{p}k^{\frac{p}{q}}-\frac{k}{q}\int_{M}f\,dv_g=0,
\end{equation*}
one can easily get
\begin{equation*}\label{eq5.3}
\mu_{k_0,q}^{\varepsilon}\leq \mathcal{I}_q^{\varepsilon}(k_0^{\frac{1}{q}})=\frac{1}{q}\int_{M}\frac{a}{(k_0^{\frac{2}{q}}
+\varepsilon)^{\frac{q}{2}}}\,dv_g<\frac{1}{pk_0}\int_{M}a\,dv_g,
\end{equation*}
where
\begin{equation*}\label{eq5.4}
k_0=\left(\frac{q}{p}\frac{h}{\int_Mf\,dv_g}\right)^{\frac{q}{q-p}}.
\end{equation*}
It is then easy to find $k_1$ and $k_2$ independent of both $q$ and $\varepsilon$ such that $k_1<k_0<k_2$ with $k_2>1$.
Now according to the asymptotic behavior of $\mu_{k,q}^{\varepsilon}$, one can find $k_{*}$ and $k_{**}$ independent of both $q$ and $\varepsilon$
with $k_*<k_1<k_0<k_2<k_{**}$ such that $\mu_{k_0,q}^{\varepsilon}<\min\{\mu_{k_*,q}^{\varepsilon}, \mu_{k_{**},q}^{\varepsilon}\}$.
Then we define
\begin{equation*}\label{e5.5}
\mathcal{E}_{1,q}^{\varepsilon}=\inf_{u\in \mathcal{D}_{k,q}}\mathcal{I}_q^{\varepsilon}(u)
\end{equation*}
for each $\varepsilon$ and $q$ fixed, where $\mathcal{D}_{k,q}=\{u\in H_1^p(M): \|u\|_q^q=k \,\,\text{and } k_*\leq k\leq k_{**}\}$.
At this point, one can use exactly the same arguments as in the proof of Proposition \ref{prop4.1} to obtain a positive solution of Eq. \eqref{eq:M}.
\end{proof}

Let us now assume that $\sup_Mf< 0$. It suffices to study the asymptotic behavior of $\mu_{k,q}^{\varepsilon}$ for large $k$.
Clearly, for any $u\in \mathcal{B}_{k,q}$, we have
\begin{equation*}\label{e5.6}
{\mathcal{I}}_q^{\varepsilon}(u)\geq\frac{h}{p}k^{\frac{p}{q}}+\frac{k}{p^*}\sup_{M}|f|.
\end{equation*}
We then immediately see that $\mu_{k,q}^{\varepsilon}\rightarrow +\infty$ as $k\rightarrow +\infty$ since ${p}/{q}<1$ (see Figure \ref{Figure2}).
With the same idea above, we can conclude that Eq.\eqref{eq:M} admits a positive solution. This is the content of the following result whose
proof is straightforward.
\begin{proposition}\label{prop5.3}
If $\sup_Mf<0$, then Eq.\eqref{eq:M} admits a positive solution.
\end{proposition}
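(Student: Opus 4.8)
The plan is to run the same subcritical scheme as in Theorem~\ref{theorem2}(1), which simplifies considerably here because the hypothesis $\sup_Mf<0$ makes the relevant functional globally coercive. First I would record the shape of the curve $k\mapsto\mu_{k,q}^{\varepsilon}$: the estimate $\mathcal{I}_q^{\varepsilon}(u)\geq\frac{h}{p}k^{p/q}+\frac{k}{p^{*}}\sup_M|f|$ displayed above, together with $p/q<1$, forces $\mu_{k,q}^{\varepsilon}\to+\infty$ as $k\to+\infty$, while the small-$k$ branch rises as depicted in Figure~\ref{Figure2}. The decisive gain over the case $\sup_Mf=0$ is that now $\inf_M|f|>0$, so for fixed $\varepsilon>0$ and $q\in(p^{\flat},p^{*})$ one has
\begin{equation*}
\mathcal{I}_q^{\varepsilon}(u)\geq\frac{1}{p}\|\nabla_gu\|_p^p-\frac{|h|}{p}\|u\|_p^p+\frac{\inf_M|f|}{q}\|u\|_q^q,
\end{equation*}
and since $q>p$ and $\|u\|_p\leq\|u\|_q$, the positive $L^q$-term dominates the negative $L^p$-term for large $\|u\|$. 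Hence $\mathcal{I}_q^{\varepsilon}$ is coercive and bounded below on $H_1^p(M)$.

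Next I would minimize directly. As the singular term is continuous for $\varepsilon>0$ (dominated convergence) and the remaining terms are weakly lower semicontinuous, the coercive functional $\mathcal{I}_q^{\varepsilon}$ attains its global minimum at some $u_q^{\varepsilon}$, which we may take nonnegative since $\mathcal{I}_q^{\varepsilon}(u)=\mathcal{I}_q^{\varepsilon}(|u|)$. Testing against a small positive constant $t$ gives $\mathcal{I}_q^{\varepsilon}(t)<\mathcal{I}_q^{\varepsilon}(0)=\frac{1}{q\varepsilon^{q/2}}\int_Ma\,dv_g$, since $(t^2+\varepsilon)^{-q/2}\leq\varepsilon^{-q/2}$ and, because $q>p$, the negative term $\frac{h}{p}t^p$ outweighs $\frac{1}{q}\int_M|f|t^q\,dv_g$ for $t$ small; hence the minimum is strictly below $\mathcal{I}_q^{\varepsilon}(0)$ and therefore $u_q^{\varepsilon}\not\equiv0$. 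Being an unconstrained minimizer, $u_q^{\varepsilon}$ is a free critical point, hence a weak solution of \eqref{eq:M2}; by Lemma~\ref{lemma 2.1}(i) it lies in $C^{1,\alpha}(M)$ and by the strong maximum principle $u_q^{\varepsilon}>0$. (Equivalently, one may minimize over the slab $\{k_*\leq\|u\|_q^q\leq k_{**}\}$ as in Theorem~\ref{theorem2}(1), the minimizing level landing at an interior value by the asymptotics above.) The minimal energy is bounded above, uniformly in $\varepsilon$ and $q$, by the energy of a fixed constant test function, and coercivity then yields a uniform $H_1^p$-bound on $u_q^{\varepsilon}$.

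Finally I would pass to the limits $\varepsilon\searrow0$ and $q\nearrow p^{*}$, following Step~2 of Claim~\ref{claim2} and Claim~\ref{claim3}. Lemma~\ref{lemma 2.2} supplies a positive lower bound $\min_Mu_q^{\varepsilon}\geq c_0>0$ that is uniform in both $\varepsilon$ and $q$, so the singular term $a(u^2+\varepsilon)^{-q/2-1}$ stays controlled and converges; the Demengel--Hebey identification of the weak limit of $|\nabla_gu_q^{\varepsilon}|_g^{p-2}\nabla_gu_q^{\varepsilon}$ \cite{Demengel98} then produces, after $\varepsilon\to0$, a positive solution $u_q$ of the $\varepsilon=0$ equation \eqref{e4.4}, and after $q\to p^{*}$ a positive $u\in C^{1,\alpha}(M)$ solving \eqref{eq:M}. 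The one genuinely delicate point is the passage of the critical term $\int_Mf|u_q|^q\,dv_g\to\int_Mf|u|^{p^{*}}\,dv_g$; in Theorem~\ref{theorem1} this was exactly what forced the smallness of $\sup_Mf$ (Proposition~\ref{prop4.2}). Here, however, $\sup_Mf<0$ means $f^+\equiv0$, so the term $\int_Mf^+w^pu^{q-p}\,dv_g$ in the Moser-type estimate of Proposition~\ref{prop4.2} vanishes identically; the uniform $L^{p^{*}(1+\delta)}$-bound on $u_q$ then follows with no smallness hypothesis, and the gradient strong convergence of Proposition~\ref{prop4.3} closes the argument. This is precisely why the proof is straightforward.
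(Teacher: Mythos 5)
Your proposal is correct and follows essentially the paper's own route: the paper's proof of Proposition \ref{prop5.3} is the one-line remark ``with the same idea above,'' meaning the scheme of Theorem \ref{theorem2}(1) --- minimize $\mathcal{I}_q^{\varepsilon}$ for the subcritical approximation \eqref{eq:M2}, use the uniform lower bound of Lemma \ref{lemma 2.2}, and pass to the limits $\varepsilon\searrow 0$ and $q\nearrow p^*$ as in Step 2 of Claim \ref{claim2} and in Claim \ref{claim3}, with the Demengel--Hebey identification of the weak limit of the gradients --- and this is exactly what you do. Your one genuine deviation is a welcome simplification: since $\sup_Mf<0$ gives $\inf_M|f|>0$ and $\|u\|_p\leq\|u\|_q$ (unit volume), you observe that $\mathcal{I}_q^{\varepsilon}$ is globally coercive on $H_1^p(M)$, so an unconstrained global minimizer exists and is made nontrivial by testing with a small constant; this bypasses the slab $\mathcal{D}_{k,q}$ and the bookkeeping $\mu_{k_0,q}^{\varepsilon}<\min\{\mu_{k_*,q}^{\varepsilon},\mu_{k_{**},q}^{\varepsilon}\}$ that the paper imports from the proof of Theorem \ref{theorem2}(1), at no cost. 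One remark on your closing paragraph: it overstates what is needed. For bare existence, the passage $q\to p^*$ requires only the weak convergence $(u_q)^{q-1}\rightharpoonup u^{p^*-1}$ in $L^{\frac{p^*}{p^*-1}}(M)$ (cf.\ \eqref{e4.26}), the dominated convergence of the singular term via the uniform lower bound, and the identification $\Sigma=|\nabla_g u|_g^{p-2}\nabla_g u$; the energy convergence $\int_Mf(u_q)^q\,dv_g\to\int_Mf\,u^{p^*}\,dv_g$ of Proposition \ref{prop4.2} and the strong gradient convergence of Proposition \ref{prop4.3} are used in the paper only to separate the two solutions of Theorem \ref{theorem1}, not to produce a single solution. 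That said, your observation that the Moser-type estimate in Proposition \ref{prop4.2} goes through with no smallness condition here, because $f^+\equiv 0$ annihilates the term $\int_Mf^+(w_{i,q})^p(u_{i,q})^{q-p}\,dv_g$ generating $\kappa$, is accurate, so this extra step is harmless --- merely unnecessary.
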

\begin{remark}
One can also prove Theorem \ref{theorem2} (2) with the help of the classical sub- and supersolution method. See \cite{CIP2} for the case $p=2$ and \cite{Druet} for the case $a(x)\equiv 0$.
\end{remark}


\section*{Acknowledgments}
\addcontentsline{toc}{subsection}{Acknowledgements}
This work was supported by National Natural Science Foundation of China (Grants No. 11771342 and 11571259),
and was partially supported by the Natural Science Foundation of Hubei Province (Grants No. 2019CFA007).


\end{document}